\newcommand{\fl}{f{\kern0.075em}l}
\newcommand{\norm}[1]{\left\lVert #1 \right\rVert}
\theoremstyle{plain}
\newtheorem{thm}{Theorem}[section]
\newtheorem{lem}[thm]{Lemma}
\newtheorem{prop}[thm]{Proposition}
\theoremstyle{definition}
\newtheorem{defn}[thm]{Definition}
\newtheorem{exmp}[thm]{Example}
\newenvironment{manualtheorem}[1]{%
  \manualtheoreminner
}{\endmanualtheoreminner}
\setlist[enumerate]{itemsep=2mm, topsep=0mm}
\title[Boundary behaviour of the Bergman and Szeg\H{o} kernels]{Boundary behaviour of the Bergman and Szeg\H{o} kernels on generalized decoupled domains
}
\author{Ravi Shankar Jaiswal}
\address{Centre for Applicable Mathematics, Tata Institute of Fundamental Research, Bangalore 560065, India.}
\email{ravi@tifrbng.res.in}
\date{December 20, 2023}
\subjclass[2020]{Primary 32A25, 32W10;
Secondary 32A36}
\keywords{Bergman kernel, Bergman metric, Szeg\H{o} kernel, $\overline{\partial}_b$-operator, Infinite type domains.}
\begin{document}

\addtolength{\jot}{2mm}
\addtolength{\abovedisplayskip}{1mm}
\addtolength{\belowdisplayskip}{1mm}

\maketitle
\begin{abstract} 
We prove optimal estimates of the Bergman and Szeg\H{o} kernels on the diagonal, and the Bergman metric near the boundary of bounded smooth generalized decoupled pseudoconvex domains in $\mathbb{C}^n$. 
The generalized decoupled domains we consider allow the following possibilities: (a) complex tangential directions need not be decoupled separately, and (b) boundary points could have both finite and infinite type directions.
\end{abstract}
\section{Introduction}\label{Intro} 
The Bergman and Szeg\H{o} kernels hold significance as fundamental reproducing kernels in complex analysis, each with its distinct properties. The Bergman kernel, as a measure, is biholomorphically invariant, whereas the Szeg\H{o} kernel does not possess this property. The Bergman kernel is closely associated with the $\overline{\partial}$-problem and the $\overline{\partial}$-Neumann Laplacian, while the Szeg\H{o} kernel is related to the $\overline{\partial}_b$-problem and the Kohn Laplacian.

McNeal \cite{McNeal 1991} studied the boundary behaviour of the Bergman kernel and metric near finite type boundary points of decoupled domains in $\mathbb{C}^n$. Bharali \cites{Bharali 2010, Bharali 2020} proved optimal estimates of the Bergman kernel and metric near exponentially flat infinite type boundary points of domains in $\mathbb{C}^2$. 

We work on a class of domains that generalize the decoupled domains considered by McNeal \cite{McNeal 1991}. In our class of domains, complex tangential directions need not be decoupled individually and boundary points could have both finite and infinite type directions.
Here is an example of such a domain:
\begin{equation}\label{example-1}
    \left\{(z_1, z_2, z_3, z_4) \in \mathbb{C}^4:\operatorname{Re}z_4 > e^{-1/\left(|z_1|^2 + |z_2|^2\right)} + |z_3|^2\right\}.
\end{equation}

Note that the complex tangential directions at the origin are not decoupled separately, and the boundary is finite type in the $z_3$ direction while being infinite type in the $z_1$ and $z_2$ directions.

The primary goal of this article is to prove the optimal lower and upper bounds of the Bergman and Szeg\H{o} kernels on the diagonal, and the Bergman metric on this class of generalized decoupled domains.
We now define generalized decoupled domains.
\begin{defn}
A bounded smooth domain $D \subset \mathbb{C}^n$ with $0 \in bD$ is said to be a
\emph{generalized decoupled domain near $0$} if there exists a local defining function of $D$ near the origin of the form
\begin{align}
    \rho(z', z_n) = F(z') - \operatorname{Re}z_n,
\end{align}
where $F : \mathbb{C}^{n - 1} \to \mathbb{R}$ is a smooth function, satisfying
\begin{align}\label{eqF}
    F(z') = f_1\left(\left|z^1\right|\right) + \dots + f_k\left(\left|z^k\right|\right).
\end{align}
Here \(z' = \left(z^1, \dots, z^k\right) \in \mathbb{C}^{n_1} \times \dots \times \mathbb{C}^{n_k}\), 
where $k, n_1, \dots, n_k \in \mathbb{N}$ satisfy the condition $n_1 + \dots + n_k = n - 1$, and each \(f_j\) is either \emph{finite type at zero} or \emph{mildly infinite type at zero} (refer to Definitions \ref{finite type} and \ref{infinite type} for the precise definition of these terms).
\end{defn}
The following theorem gives optimal estimates of the Bergman kernel on the diagonal.
\begin{thm}\label{Bergman kernel}
    Let $D \subset \mathbb{C}^n$ be a bounded smooth pseudoconvex generalized decoupled domain near $0 \in bD$.
\begin{enumerate}
    \item Then, there exist constants $C > 0$ and $t_0 > 0$ such that
\begin{align}
    \kappa_{D}(z) \geq C (\operatorname{Re}z_n)^{-2} \Pi_{j = 1}^{k} [f_j^{-1}(\operatorname{Re} z_n)]^{-2n_{j}},
\end{align}
for each $z \in \{w \in \mathbb{C}^n : \operatorname{Re}w_n > F(w'), \text{ and }|w_n| < t_0\}$.
\item Then, for each $\alpha = (\alpha_1, \dots, \alpha_k)$ with ${\alpha_j > 1}$
$(j \in \{1, \dots, k\})$ and $\beta > 1$, there exist constants $C(\alpha, \beta) > 0$ and $t_0 > 0$ such that
\begin{align}
    \kappa_{D}(z) \leq C(\alpha, \beta) (\operatorname{Re}z_n)^{-2} \Pi_{j = 1}^{k} [f_j^{-1}(\operatorname{Re} z_n)]^{-2n_{j}},
\end{align}
for each $z \in \Gamma^{\alpha, \beta}_k \cap \{w \in \mathbb{C}^n : |w_n| < t_0\}$.
Here
\begin{align}
    \Gamma^{\alpha, \beta}_{k} := \left\{w \in \mathbb{C}^n: \operatorname{Re}w_n > k \beta \left[f_1\left(\alpha_1\left|w^1\right|\right) + \dots + f_k\left(\alpha_k\left|w^k\right|\right)\right]\right\}.
\end{align}
\end{enumerate}
\end{thm}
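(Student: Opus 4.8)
The plan is to prove the two estimates by separate, largely soft, arguments. Throughout write $x=\operatorname{Re}z_n$ and $r_j=f_j^{-1}(x)$, and fix a (product) neighbourhood $U$ of $0$ on which $D$ agrees with $\{w:\operatorname{Re}w_n>F(w')\}$; let $U'=U'_1\times\cdots\times U'_k$ be the $w'$-projection of $U$. I will use repeatedly the elementary scaling facts guaranteed by Definitions~\ref{finite type} and~\ref{infinite type}: each $f_j$ is increasing with $f_j(0)=0$ and has an inverse near $0$, and $f_j^{-1}$ is doubling (so $f_j^{-1}(x)\le C_\lambda\,f_j^{-1}(\lambda x)$ for fixed $\lambda\in(0,1)$ and $x$ small).

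For the lower bound (1) I would first localize. Since $D$ is bounded and pseudoconvex, the standard weighted-$\overline{\partial}$ argument gives $\kappa_D(z)\ge c\,\kappa_{D\cap U}(z)$ with $c$ independent of $z$: cut off an $L^2(D\cap U)$-extremal function $h$ at $z$ by a fixed bump $\chi$, solve $\overline{\partial}u=h\,\overline{\partial}\chi$ on $D$ against the weight $2N\log|w-z|+|w|^2$, whose complex Hessian dominates the Euclidean one uniformly, so that $u(z)=0$ and $\|u\|_{L^2(D)}$ is bounded uniformly in $z$; then $\tilde h=\chi h-u\in\mathcal O(D)\cap L^2(D)$ has $\tilde h(z)=h(z)$ and controlled norm. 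On $D\cap U$ I would then estimate $\kappa_{D\cap U}(z)\ge|h(z)|^2/\|h\|_{L^2(D\cap U)}^2$ by testing against the single function $h(w)=(w_n+\overline{z_n})^{-p}$, for a large integer $p$ depending only on $n$, the $n_j$ and the growth of the $f_j$. Here $|h(z)|^2=(2x)^{-2p}$, and integrating out $\operatorname{Im}w_n$ and then $\operatorname{Re}w_n$ gives $\|h\|_{L^2(D\cap U)}^2\lesssim\int_{U'}(F(w')+x)^{2-2p}\,dV(w')$. The key manipulation is that, by the arithmetic--geometric mean inequality, $F(w')+x=\sum_j f_j(|w^j|)+x\ge\prod_{j=1}^k\bigl(f_j(|w^j|)+x\bigr)^{1/k}$; since $2-2p<0$, this factorizes the integral over the blocks, and each factor $\int_{U'_j}\bigl(f_j(|w^j|)+x\bigr)^{(2-2p)/k}\,dV(w^j)$ is a Laplace-type integral which, by the type hypotheses and for $p$ large, equals $\asymp x^{(2-2p)/k}\,r_j^{2n_j}$ (on $|w^j|\le f_j^{-1}(x)$ the integrand is $\asymp x^{(2-2p)/k}$, while the tail beyond that radius is negligible). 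Hence $\|h\|_{L^2(D\cap U)}^2\lesssim x^{2-2p}\prod_j r_j^{2n_j}$, and so $\kappa_D(z)\ge c\,(2x)^{-2p}\big/\bigl(x^{2-2p}\prod_j r_j^{2n_j}\bigr)\asymp x^{-2}\prod_j r_j^{-2n_j}$, which is~(1).

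For the upper bound (2) I would use the sub-mean-value inequality: if $P=P(z;\varrho)$ is a polydisc centred at $z$ with $P\subset D$, then $\kappa_D(z)\le 1/\operatorname{Vol}(P)$. Given $z\in\Gamma_k^{\alpha,\beta}$, the cone condition reads $\sum_j f_j(\alpha_j|z^j|)<x/(k\beta)$, hence $|z^j|<\alpha_j^{-1}f_j^{-1}(x/(k\beta))$ for every $j$; I would therefore take $P$ with radius $\varrho_j:=\tfrac{\alpha_j-1}{\alpha_j\sqrt{n_j}}\,f_j^{-1}(x/(k\beta))$ in each of the $n_j$ coordinates of the $j$-th block, and radius $\varrho_n:=(1-\tfrac1\beta)x$ in $w_n$. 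Then for $w\in P$ one has $|w^j|\le|z^j|+\sqrt{n_j}\,\varrho_j\le f_j^{-1}(x/(k\beta))$, so $f_j(|w^j|)\le x/(k\beta)$ and thus $\sum_j f_j(|w^j|)\le x/\beta=x-\varrho_n\le\operatorname{Re}w_n$, giving $P\subset D$ (and $P\subset U$ once $t_0$ is small, since $\varrho_n<2t_0$ and $f_j^{-1}(x/(k\beta))\to0$ as $x\to0$). Finally $\operatorname{Vol}(P)=\pi^n(1-\tfrac1\beta)^2x^2\prod_j\bigl(\tfrac{\alpha_j-1}{\alpha_j\sqrt{n_j}}\bigr)^{2n_j}\bigl[f_j^{-1}(x/(k\beta))\bigr]^{2n_j}\ge C(\alpha,\beta)\,x^2\prod_j r_j^{2n_j}$ by the doubling of $f_j^{-1}$, whence $\kappa_D(z)\le C(\alpha,\beta)^{-1}x^{-2}\prod_j r_j^{-2n_j}$, which is~(2).

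I expect the main obstacle to be the quantitative evaluation, in the lower bound, of the weighted integrals $\int_{U'_j}\bigl(f_j(|w^j|)+x\bigr)^{(2-2p)/k}\,dV(w^j)$ after the factorization: this is exactly where the precise content of Definitions~\ref{finite type} and~\ref{infinite type} enters, in order to show that each block contributes the volume factor $[f_j^{-1}(x)]^{2n_j}$ and that the tail is controlled once $p$ is large; the ``mildly'' in ``mildly infinite type'' is essentially the hypothesis that keeps $f_j^{-1}$ doubling, which is in turn what the upper-bound volume estimate relies on. The $\overline{\partial}$-localization and the polydisc construction are then routine — in the latter, the only point requiring care is that the factor $k\beta$ and the dilations $\alpha_j$ built into $\Gamma_k^{\alpha,\beta}$ make the polydisc fit inside $D$ uniformly over all admissible $\alpha$ and $\beta$, however close to $1$ they may be.
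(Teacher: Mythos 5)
Your proposal is, in outline, the same as the paper's proof: for the lower bound you localize and test against a holomorphic function of $w_n$ alone (the paper uses $\phi(w)=(\operatorname{Re}z_n)^n/(w_n-i\operatorname{Im}z_n+\operatorname{Re}z_n)^n$, you use $(w_n+\overline{z_n})^{-p}$ with $p$ large), reduce the norm to $\int (F(w')+x)^{2-2p}\,dV(w')$, factor over the blocks and estimate one radial integral per block; for the upper bound you inscribe exactly the paper's product of balls $A_z$ (same radii, up to the $\sqrt{n_j}$ normalization) centred at $z$, use $\kappa_D(z)\le 1/\operatorname{Vol}$, and conclude via $f_j^{-1}(x/(k\beta))\gtrsim f_j^{-1}(x)$. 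Your choice of a generic large exponent $p$ forces the AM--GM splitting, whereas the paper takes the exponent $n$ so that after integrating out $w_n$ the power on $F(w')+x$ is exactly $2(n_1+\cdots+n_k)$ and the factorization $(F(w')+x)^{2n-2}\ge\prod_j\bigl(f_j(|w^j|)+x\bigr)^{2n_j}$ is immediate; both normalizations work.

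The one substantive issue is the step you yourself flag: the radial estimate $\int_0^1 r^{2n_j-1}\bigl(f_j(r)+x\bigr)^{-q}\,dr\lesssim x^{-q}\,[f_j^{-1}(x)]^{2n_j}$ is asserted with the heuristic that ``the tail beyond $r=f_j^{-1}(x)$ is negligible once $p$ is large,'' and for a mildly infinite type block that heuristic is not correct as stated: on the range $f_j^{-1}(x)\le r\le f_j^{-1}(\sqrt{x})$ the integrand is still of size $x^{-q}r^{2n_j-1}$, no choice of $p$ helps, and one needs precisely the doubling of $\Lambda_{f_j}$ (the paper's Lemma \ref{Lambda_f}) to get $[f_j^{-1}(\sqrt{x})]^{2n_j}\lesssim[f_j^{-1}(x)]^{2n_j}$; only the far region $f_j(r)\ge\sqrt{x}$ is handled by the flatness of $f_j$ (or, in the finite type case, by taking $p$ large). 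This is exactly the content of the paper's Proposition \ref{finite type lemma}, which your write-up would need to reproduce. Likewise, the ``elementary scaling fact'' $f_j^{-1}(x)\le C_\lambda f_j^{-1}(\lambda x)$ used in your volume bound is automatic only for finite type; for mildly infinite type it is again a consequence of the $\Lambda_{f_j}$-doubling (the paper's estimate \eqref{22}, via $t/(k\beta)\ge t^2$ for small $t$), not of the definition directly. Finally, your weighted-$\overline{\partial}$ localization is in effect a proof of the Diederich--Fornaess--Herbort localization that the paper simply cites (Theorem \ref{2.2}); it is legitimate since $D$ is pseudoconvex, and for the upper bound plain monotonicity suffices, as you use. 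So: same method, correct structure, with the key quantitative lemma left to be supplied exactly as in the paper's Section \ref{Technical Lemmas}.
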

Our next theorem gives optimal estimates of the Bergman metric.
\begin{thm}\label{Berman metric}
Let $D \subset \mathbb {C}^n$ be a bounded smooth pseudoconvex generalized decoupled domain near $0 \in bD$.
Then, for each $\alpha = (\alpha_1, \dots, \alpha_k)$ with $\alpha_j > 1$ $(j \in \{1, 2, \dots, k\})$ and $\beta > 1$, there exist constants $C(\alpha, \beta) > 0$ and $t_0 > 0$ such that 
\begin{align*}
    \frac{1}{C(\alpha, \beta)}\left(\sum_{j = 1}^{k}\frac{|\xi^j|^2}{(f_j^{-1}(\operatorname{Re}z_n)^2} + \frac{|\xi_n|^2}{(\operatorname{Re}z_n)^2}\right)\leq B^2_D(z; \xi) \leq C(\alpha, \beta) \left(\sum_{j = 1}^{k}\frac{|\xi^j|^2}{(f_j^{-1}(\operatorname{Re}z_n)^2} + \frac{|\xi_n|^2}{(\operatorname{Re}z_n)^2}\right)
\end{align*}
for each $z \in \Gamma^{\alpha, \beta}_{k} \cap \{w \in \mathbb{C}^n: |w_n| < t_0\}$, and $\xi = (\xi^1, \dots, \xi^k, \xi_n) \in \mathbb{C}^{n_1} \times \dots \times \mathbb{C}^{n_k} \times \mathbb{C}$. 
\end{thm}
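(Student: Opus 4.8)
\emph{Proof strategy.} The plan is to reduce everything to the kernel estimates of Theorem~\ref{Bergman kernel} via the classical identity
\[
    B^2_D(z;\xi)=\frac{\mathcal{M}_D(z;\xi)}{\kappa_D(z)},\qquad
    \mathcal{M}_D(z;\xi):=\sup\Bigl\{\,\bigl|{\textstyle\sum_{i}}\,\partial_{w_i}f(z)\,\xi_i\bigr|^2:\ f\in A^2(D),\ \norm{f}_{L^2(D)}\le1,\ f(z)=0\Bigr\},
\]
so that the task becomes one of matching two-sided bounds for $\mathcal{M}_D(z;\xi)$. Fix $\alpha$ and $\beta$ as in the statement and write $\tau_j=\tau_j(z):=f_j^{-1}(\operatorname{Re}z_n)$, $\tau_n=\tau_n(z):=\operatorname{Re}z_n$, and $A(z;\xi):=\sum_{j=1}^k|\xi^j|^2/\tau_j^2+|\xi_n|^2/\tau_n^2$. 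On $\Gamma^{\alpha,\beta}_k$ the cone condition forces $\operatorname{Re}z_n-F(z')\asymp\operatorname{Re}z_n$ and $|z^j|\le\tau_j/\alpha_j$, so, using the finite resp.\ mildly infinite type hypotheses on the $f_j$ (Definitions~\ref{finite type},~\ref{infinite type}) to control $F$ on a polydisc about $z$, for $t_0$ small there is $c>0$ independent of $z$ with
\[
    P(z):=\bigl\{\,w\in\mathbb{C}^n:\ |w^j-z^j|<c\tau_j\ \ (1\le j\le k),\ \ |w_n-z_n|<c\tau_n\,\bigr\}\subset D
\]
for $z\in\Gamma^{\alpha,\beta}_k\cap\{|w_n|<t_0\}$, and $\operatorname{vol}(P(z))\asymp\tau_n^2\prod_j\tau_j^{2n_j}\asymp\kappa_D(z)^{-1}$ by Theorem~\ref{Bergman kernel}. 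Writing $r_i=r_i(z)$ for the polyradius of $P(z)$ in the $i$-th coordinate direction ($c\tau_j$ if that coordinate lies in the block $z^j$, and $c\tau_n$ for $w_n$), one has $\sum_i|\xi_i|^2/r_i^2\asymp A(z;\xi)$.

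For the \emph{upper bound} I would simply apply the Cauchy estimates on $P(z)\subset D$: for $f$ admissible in $\mathcal{M}_D(z;\xi)$ these give $\bigl|\sum_i\partial_{w_i}f(z)\xi_i\bigr|^2\lesssim\operatorname{vol}(P(z))^{-1}\sum_i|\xi_i|^2/r_i^2\asymp A(z;\xi)/\operatorname{vol}(P(z))$, whence, by the lower bound $\kappa_D(z)\gtrsim\operatorname{vol}(P(z))^{-1}$ of Theorem~\ref{Bergman kernel}(1), $\mathcal{M}_D(z;\xi)\lesssim A(z;\xi)\kappa_D(z)$, and dividing by $\kappa_D(z)$ gives the stated upper bound. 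For the \emph{lower bound}, by the upper bound $\kappa_D(z)\lesssim\operatorname{vol}(P(z))^{-1}$ of Theorem~\ref{Bergman kernel}(2) it is enough to show $\mathcal{M}_D(z;\xi)\gtrsim A(z;\xi)/\operatorname{vol}(P(z))$, and since $\sum_i|\xi_i|^2/r_i^2\asymp A(z;\xi)$ is at most $n$ times its largest term, it suffices to produce, for each coordinate index $i$, a function $f_i\in A^2(D)$ with $f_i(z)=0$, $\partial_{w_m}f_i(z)=\delta_{mi}$, and $\norm{f_i}^2_{L^2(D)}\lesssim r_i^2\operatorname{vol}(P(z))$, for then $\mathcal{M}_D(z;\xi)\ge\sup_i|\xi_i|^2/\norm{f_i}^2\gtrsim\operatorname{vol}(P(z))^{-1}\max_i|\xi_i|^2/r_i^2\gtrsim A(z;\xi)/\operatorname{vol}(P(z))$. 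The natural candidate is $f_i:=(w_i-z_i)h_z$, where $h_z$ is the peak function built in the proof of Theorem~\ref{Bergman kernel}(1), normalized so that $h_z(z)=1$ and $\norm{h_z}^2_{L^2(D)}\asymp\operatorname{vol}(P(z))$; then $f_i(z)=0$ and $\partial_{w_m}f_i(z)=\delta_{mi}h_z(z)=\delta_{mi}$, so the whole lower bound comes down to the uniform concentration estimate
\[
    \int_D|w_i-z_i|^2\,|h_z(w)|^2\,dV(w)\ \lesssim\ r_i(z)^2\,\operatorname{vol}(P(z))\qquad\bigl(z\in\Gamma^{\alpha,\beta}_k\cap\{|w_n|<t_0\},\ \text{all }i\bigr).
\]

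The hard part will be precisely this concentration estimate: it cannot be read off from the on-diagonal bounds of Theorem~\ref{Bergman kernel} alone, since the only pointwise information they give, $|h_z(w)|^2\le\kappa_D(w)\norm{h_z}^2$, yields after integration the divergent $\int_D\kappa_D(w)|w_i-z_i|^2\,dV(w)$, so a genuine off-diagonal decay input is needed. I would obtain it by taking $h_z=\psi_z-v$ with $\psi_z$ a cutoff equal to $1$ on $\tfrac12P(z)$ and supported in $P(z)$, and $v$ the minimal-norm solution of $\overline{\partial}v=\overline{\partial}\psi_z$ on the bounded pseudoconvex domain $D$ relative to a plurisubharmonic weight $\varphi_z$ that (i) has a logarithmic pole at $z$ forcing $v(z)=0$, (ii) has complex Hessian dominating the anisotropic metric $\sum_ir_i(z)^{-2}\,dw_i\wedge d\overline{w}_i$ near $z$, and (iii) grows like the square of the $P(z)$-adapted distance to $z$ away from $P(z)$; Hörmander's estimate then both makes $\norm{v}^2_{L^2(D)}\lesssim\operatorname{vol}(P(z))$ and, via the growth (iii), forces the weighted bound above on $h_z$. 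The real obstacle is to construct such a weight $\varphi_z$ — genuinely plurisubharmonic on all of $D$, with the prescribed Hessian lower bound and growth, simultaneously in the finite-type blocks (where McNeal-type polynomial estimates are available) and in the mildly infinite-type blocks (where scaling breaks down and one must exploit the precise growth from Definition~\ref{infinite type}), and uniformly for $z\in\Gamma^{\alpha,\beta}_k$ as $z\to0$; this is essentially the same construction that drives Theorem~\ref{Bergman kernel}, now augmented by the decay requirement (iii).
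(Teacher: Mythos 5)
Your framework is the paper's: since $\mathcal{M}_D(z;\xi)=1/I_1^D(z;\xi)$, your identity $B_D^2=\mathcal{M}_D/\kappa_D$ is exactly Proposition \ref{Fuchs}, and your upper-bound argument (inscribe the polydisc $P(z)$, use monotonicity of the extremal problem there — your ``Cauchy estimates'' are just the explicit Bergman data of the polydisc — and then divide by the kernel lower bound of Theorem \ref{Bergman kernel}(1), with $f_j^{-1}(\operatorname{Re}z_n/k\beta)\asymp f_j^{-1}(\operatorname{Re}z_n)$) is essentially Step 2 of the paper's proof and is fine.

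The lower bound, however, has a genuine gap, and it sits exactly where you place it: the concentration estimate $\int_D|w_i-z_i|^2|h_z|^2\,\mathrm{d}V\lesssim r_i^2\operatorname{vol}(P(z))$. Your candidate $h_z$ is the peak function from the proof of Theorem \ref{Bergman kernel}(1), i.e.\ (up to normalization) $\phi(w)=(\operatorname{Re}z_n)^n(w_n-i\operatorname{Im}z_n+\operatorname{Re}z_n)^{-n}$; but $\phi$ has no decay in $w'$ and only the decay in $w_n$ that is exactly critical for membership in $L^2$, so the estimate fails for it: redoing the paper's norm computation with the extra factor $|w_i|^2$ (Proposition \ref{finite type lemma} with exponent $2n_j+2$) gives $\int_D|w_i-z_i|^2|\phi|^2\,\mathrm{d}V\lesssim [f_j^{-1}(\operatorname{Re}z_n)]^2\Pi_\ell[f_\ell^{-1}(\operatorname{Re}z_n)]^{2n_\ell}$, which is larger than $r_i^2\operatorname{vol}(P(z))\asymp(\operatorname{Re}z_n)^2[f_j^{-1}(\operatorname{Re}z_n)]^2\Pi_\ell[f_\ell^{-1}(\operatorname{Re}z_n)]^{2n_\ell}$ by the factor $(\operatorname{Re}z_n)^{-2}$. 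Your proposed repair — Hörmander's theorem with a plurisubharmonic weight having a logarithmic pole at $z$, an anisotropic Hessian lower bound and quadratic growth adapted to $P(z)$, uniformly over finite-type and mildly infinite-type blocks — is precisely the step you do not carry out, and it is not ``the same construction that drives Theorem \ref{Bergman kernel}'': the paper's kernel lower bound uses no $\overline{\partial}$-machinery or weights at all.

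The paper closes this gap elementarily, by changing the competitor rather than estimating $(w_i-z_i)\phi$: it uses $\psi(w)=\frac{(2\operatorname{Re}z_n)^{n+1}(w_n-i\operatorname{Im}z_n-\operatorname{Re}z_n)}{\xi_n(w_n-i\operatorname{Im}z_n+\operatorname{Re}z_n)^{n+1}}$ for the normal direction and $\psi^{\star}(w)=\frac{(2\operatorname{Re}z_n)^{n+1}(w_i-z_i)}{\xi^j_{n_j}(w_n-i\operatorname{Im}z_n+\operatorname{Re}z_n)^{n+1}}$ for a tangential direction, i.e.\ your $(w_i-z_i)h_z$ multiplied by the extra factor $2\operatorname{Re}z_n/(w_n-i\operatorname{Im}z_n+\operatorname{Re}z_n)$. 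This extra power of normal decay restores exactly the missing $(\operatorname{Re}z_n)^2$, so the same integral computation as for $\phi$ (again via Proposition \ref{finite type lemma}) gives $I_1^{D\cap U}(z;\xi)\lesssim r_i^2\operatorname{vol}(P(z))/|\xi_i|^2$, which together with the kernel upper bound of Theorem \ref{Bergman kernel}(2) on the cone $\Gamma^{\alpha,\beta}_k$ yields the stated lower bound on $B_D^2$. So the needed off-diagonal decay is obtained by an explicit one-line modification, and the weight construction you flag as the ``real obstacle'' is unnecessary; as written, your proof of the lower bound is incomplete.
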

The following theorem gives optimal estimates of the Szeg\H{o} kernel on the diagonal.
\begin{thm}\label{Theorem 1.3}
Let $D \subset \mathbb{C}^n$ be a bounded smooth pseudoconvex generalized decoupled domain near $0 \in bD$.
\begin{enumerate}
    \item Then, there exist constants $C > 0$ and $t_0 > 0$ such that
    \begin{align}
    \mathcal{S}_D(z) \geq C (\operatorname{Re}z_n)^{-1} \Pi_{j = 1}^{k} [f_j^{-1}(\operatorname{Re} z_n)]^{-2n_{j}},
\end{align}
for each $z \in \{w \in \mathbb{C}^n: \operatorname{Re}w_n > F(w'), \text{ and } |w_n| < t_0\}$.
    \item Suppose $bD$ is convex near the origin.  Then, for  each $\alpha = (\alpha_1, \dots, \alpha_k)$ with $\alpha_j > 1$ $(j \in \{1, 2, \dots, k\})$ and $\beta > 1$, there exist constants $C(\alpha, \beta) > 0$ and $t_0 > 0$ such that
\begin{align}
    \mathcal{S}_{D}(z) \leq C({\alpha})(\operatorname{Re}z_n)^{-1} \Pi_{j = 1}^{k} [f_j^{-1}(\operatorname{Re} z_n)]^{-2n_{j}},
\end{align}
for each $z \in \Gamma^{\alpha, \beta}_k \cap \{w \in \mathbb {C}^n :|w_n| < t_0\}$. 
\end{enumerate}
\end{thm}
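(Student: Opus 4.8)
The plan is to derive the lower bound from the extremal characterization of the Szeg\H{o} kernel and the upper bound from the same Fourier–analytic machinery (alternatively from a sub-mean-value inequality), after a preliminary localization. Write $\phi(w')=f_1(|w^1|)+\dots+f_k(|w^k|)$, so that near $0$ the domain is $\{\operatorname{Re}w_n>\phi(w')\}$, and set $\tau:=\operatorname{Re}z_n$; on the sets occurring in the statement $\tau\asymp\operatorname{dist}(z,bD)$, and $\tau^{-1}\prod_j[f_j^{-1}(\tau)]^{-2n_j}$ is, up to constants, the reciprocal of the surface measure of the slice of $bD$ cut by the non-isotropic polydisc $Q(z,\tau)=\{w:|w^j-z^j|<c\,f_j^{-1}(\tau)\ (1\le j\le k),\ |w_n-z_n|<c\tau\}$ already used in Theorem \ref{Bergman kernel}. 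First, since $\overline{\partial}_b$ has closed range on the compact boundary of a bounded pseudoconvex domain, a Szeg\H{o} localization lemma makes $\mathcal S_D(z)$ comparable (indeed equal up to $O(1)$, harmless since the quantities blow up as $z\to 0$) to the Szeg\H{o} kernel of any bounded smooth pseudoconvex domain coinciding with $D$ near $0$; this lets us pass to the model $\mathcal M=\{\operatorname{Re}w_n>\phi(w')\}$ (with the $f_j$ extended so as to preserve their relevant properties), which in the setting of (2) is convex, since convexity of $bD$ near $0$ forces each $f_j$ to be convex increasing, hence $\phi$ convex.

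For the lower bound, diagonalize $H^2(b\mathcal M)$ by the partial Fourier transform in $\operatorname{Im}w_n$: this writes each $h\in H^2(b\mathcal M)$ as $h(w)=\int_0^{\infty}e^{-\lambda w_n}g_\lambda(w')\,d\lambda$ with $\|h\|_{L^2(b\mathcal M)}^2\asymp\int_0^{\infty}\|g_\lambda\|_{\mathcal H_\lambda}^2\,d\lambda$, where $\mathcal H_\lambda=L^2_{\mathrm{hol}}(\mathbb C^{n-1},e^{-2\lambda\phi(w')}\,dV)$; Cauchy--Schwarz (with equality for $g_\lambda=e^{-\lambda\overline{z_n}}K_\lambda(\cdot,z')$) gives $\mathcal S_{\mathcal M}(z)\asymp\int_0^{\infty}e^{-2\lambda\tau}K_\lambda(z',z')\,d\lambda$ with $K_\lambda$ the reproducing kernel of $\mathcal H_\lambda$. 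Since the weight is decoupled, $K_\lambda(z',z')=\prod_{j=1}^{k}K^{(j)}_\lambda(z^j,z^j)$ with $K^{(j)}_\lambda$ a radial-weight Bergman kernel on $\mathbb C^{n_j}$, and keeping only its constant term yields $K_\lambda(z',z')\ge\prod_j\big(\int_{\mathbb C^{n_j}}e^{-2\lambda f_j(|w|)}\,dV\big)^{-1}$. Splitting the radial integral $\int_0^{\infty}r^{2n_j-1}e^{-2\lambda f_j(r)}\,dr$ at the natural scale $r\asymp f_j^{-1}(1/\lambda)$ (using the regularity of $f_j$ and $f_j^{-1}$ packaged in Definitions \ref{finite type} and \ref{infinite type}) makes it $\asymp[f_j^{-1}(1/\lambda)]^{2n_j}$, and splitting the $\lambda$-integral at $\lambda=1/\tau$ gives $\int_0^{\infty}e^{-2\lambda\tau}\prod_j[f_j^{-1}(1/\lambda)]^{-2n_j}\,d\lambda\asymp\tau^{-1}\prod_j[f_j^{-1}(\tau)]^{-2n_j}$. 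As only $\operatorname{Re}z_n$ enters, this holds for all $z'$ with $\phi(z')<\tau$ and all $\operatorname{Im}z_n$ — precisely the assertion of (1).

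For the upper bound in (2), with the model convex and $z\in\Gamma^{\alpha,\beta}_k$, membership $z\in\Gamma^{\alpha,\beta}_k$ forces $\alpha_j|z^j|<f_j^{-1}(\tau)$ for each $j$, so $z^j$ lies strictly inside the natural scale of the $j$th weight; hence one can bound $K_\lambda(z',z')$ from \emph{above} by a constant times $\prod_j\big(\int_{\mathbb C^{n_j}}e^{-2\lambda f_j(|w|)}\,dV\big)^{-1}$, uniformly for $\lambda\lesssim1/\tau$ — the monomial series $\sum_\alpha|z^j|^{2|\alpha|}/c^{(j)}_\alpha(\lambda)$ summing to $O(c^{(j)}_0(\lambda)^{-1})$ exactly because of the gap $\alpha_j>1$ — while for $\lambda\gtrsim1/\tau$ the decay $e^{-2\lambda\tau}$, together with $\prod_jK^{(j)}_\lambda(z^j,z^j)\lesssim e^{2\lambda\phi(z')}\times(\text{sub-exponential in }\lambda)$ and $\phi(z')<\tau/(k\beta)$ on $\Gamma^{\alpha,\beta}_k$, yields a net exponential decay that kills the tail. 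Integrating reverses the earlier computation and gives $\mathcal S_{\mathcal M}(z)\lesssim\tau^{-1}\prod_j[f_j^{-1}(\tau)]^{-2n_j}$. One may instead obtain this from the sub-mean-value inequality over $Q(z,\tau)$ — which lies in $D$ for $z\in\Gamma^{\alpha,\beta}_k$, with $|Q(z,\tau)|\asymp\tau^2\prod_j[f_j^{-1}(\tau)]^{2n_j}$ — followed by the interior-to-boundary estimate $\int_{Q(z,\tau)}|f|^2\,dV\lesssim\tau\|f\|_{L^2(bD)}^2$, itself a consequence of the $L^2$-boundedness of the Hardy--Littlewood maximal operator on the non-isotropic balls of $bD$, convexity entering through the control of the admissible approach regions.

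The main obstacle should be the upper, uniform-in-$\lambda$ estimate of the decoupled kernels $K^{(j)}_\lambda(z^j,z^j)$ — equivalently, in the second route, the interior-to-boundary inequality over the anisotropic, mixed finite-/infinite-type polydisc $Q(z,\tau)$: it has to stay sharp as $z\to 0$ while $Q(z,\tau)$ degenerates, and this is exactly where the hypotheses $\alpha_j>1$, $\beta>1$ and the convexity of $bD$ near $0$ are consumed. A secondary technical point, more delicate than for finite-type domains, is the localization of the Szeg\H{o} kernel onto the model when infinite-type directions are present.
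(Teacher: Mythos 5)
Your overall strategy (localize to the rigid model $\mathcal M=\{\operatorname{Re}w_n>\phi(w')\}$, then compute its Szeg\H{o} kernel via a partial Fourier transform as a superposition of weighted Bergman kernels $K_\lambda$) is genuinely different from the paper's, but it has a real gap at its first step. You assert that, because $\overline{\partial}_b$ has closed range, $\mathcal S_D(z)$ is comparable up to constants to the Szeg\H{o} kernel of any pseudoconvex domain agreeing with $D$ near $0$, and you then work on the \emph{unbounded} model. No such two-sided localization is available off the shelf: the Szeg\H{o} kernel is not monotone under inclusion, the Chen--Shaw machinery you would invoke (Theorem \ref{2.5}, Lemma \ref{2.6}) is stated for bounded smooth pseudoconvex boundaries (and the Hardy space and Szeg\H{o} kernel of the unbounded model would themselves have to be set up), and the paper's own localization, Theorem \ref{locSzegö}, is only one-sided and is phrased through an auxiliary Szeg\H{o} kernel $\mathcal S^A_{D'}$ of a bounded subdomain --- the paper explicitly remarks it does not know whether $\mathcal S^A$ and $\mathcal S$ agree beyond the convex case. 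Proving this localization is not a ``secondary technical point'' but exactly the hard part of the lower bound: the paper solves $\overline{\partial}_b u^w=v^w$ with uniform estimates, uses Kytmanov's extension theorem and a maximum-principle argument on the slice $\{z_n=w_n\}$ to correct the cut-off kernel, and then simply tests against $\phi(w)=(\operatorname{Re}z_n)^n(w_n-i\operatorname{Im}z_n+\operatorname{Re}z_n)^{-n}$, estimating the boundary integral with Proposition \ref{finite type lemma}. Without an actual proof of your localization claim, the Fourier computation, however clean, says nothing about $\mathcal S_D$.

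On part (2), the Fourier route does not close as stated: a tail bound of the form $K_\lambda(z',z')\lesssim e^{2\lambda\phi(z')}\times(\text{sub-exponential in }\lambda)$ is insufficient, since e.g. $K_\lambda\lesssim\lambda^{2(n-1)}e^{2\lambda\phi(z')}$ yields a tail of order $\tau^{-(2n-1)}$, which is far larger than the target $\tau^{-1}\prod_j[f_j^{-1}(\tau)]^{-2n_j}$ because $f_j^{-1}(\tau)\gg\tau$; you would need the sharp anisotropic factor $\prod_j[f_j^{-1}(1/\lambda)]^{-2n_j}$, i.e.\ essentially the estimate you are trying to prove, and the radial Laplace integrals $\int_0^\infty r^{2n_j-1}e^{-2\lambda f_j(r)}\,\mathrm{d}r\asymp[f_j^{-1}(1/\lambda)]^{2n_j}$ also require the doubling hypothesis (a Laplace-transform analogue of Proposition \ref{finite type lemma}), not mere ``regularity''. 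Your alternative route --- the sub-mean-value inequality on the polydisc $A_z\subset D$ of Theorem \ref{Bergman kernel}(2), followed by $\int_{A_z}|f|^2\,\mathrm{d}V\lesssim\tau\sup_{\epsilon}\int_{bD_\epsilon}|f|^2\,\mathrm{d}\sigma\lesssim\tau\norm{f}^2_{H^2(D)}$ --- is sound and elementary, and notably does not use convexity; by contrast the paper deduces the upper bound from Chen--Fu's comparisons (Lemma \ref{Chen lemma} and Theorem \ref{Chen theorem}, $\mathcal S_{D_1}\asymp\delta_{D_1}\kappa_{D_1}$ on a convex localizing domain) together with the Bergman-kernel bound \eqref{Upper bound of Bergman kernel}, which is where convexity enters. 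So part (2) of your plan is repairable via the polydisc route, but part (1) is missing its key ingredient.
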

We now prove lower bounds on the Bergman and Szeg\H{o} kernels of domains that are more general than the ones considered above. These
domains are extended generalized decoupled domains where the decoupling need not necessarily respect the complex structure. 
The following is an example of such a domain:
\begin{align}
    \left\{(z_1, z_2, z_3) \in \mathbb{C}^3: \operatorname{Re}z_3 > \operatorname{exp}\left(-1/\left[(\operatorname{Re}z_1)^2 + (\operatorname{Im}z_2)^2\right]\right) + (\operatorname{Re}z_2)^4 + (\operatorname{Im}z_1)^2\right\}.
\end{align}
We now define this class of domains.
\begin{defn}\label{A}
A domain $D \subset \mathbb{C}^n$ with $0 \in bD$ is said to be an
\emph{extended generalized 
decoupled domain near $0$} if there 
exist a neighbourhood $U \subset 
\mathbb{C}^n$ of $0$ and a constant 
$\delta > 0$ such that
\begin{align}
    &D \cap U = \{z \in U : \operatorname{Re}z_n > F(z')\},  \quad D \subset \{z \in \mathbb{C}^n: \operatorname{Re}z_n > 0\}, \, \text{and} \label{condition1}\\
    &D \cap U^c \subset \{z \in D : |z_n| > \delta \}, \label{condition2}
\end{align}
where $F : \mathbb{C}^{n - 1} \to \mathbb{R}$ is a smooth function, satisfying
\begin{align}
    F(z') = f_1\left(\left|s^1\right|\right) + \dots + f_k\left(\left|s^k\right|\right). \label{eqF2}
\end{align}
Here \(z' = \left(s^1, s^2, \dots, s^k\right) \in \mathbb{R}^{n_1} \times \mathbb{R}^{n_2} \times \dots \times \mathbb{R}^{n_k}\), where $k, n_1, \dots, n_k \in \mathbb{N}$ satisfy the condition $n_1 + \dots + n_k = 2(n - 1)$, and each \(f_j\) is either \emph{finite type at zero} or \emph{mildly infinite type at zero}.

The conditions \eqref{condition1} and \eqref{condition2} are easily satisfied by the domain $\{z \in \mathbb{C}^n: \operatorname{Re}z_n > F(z') \}$, where $F$ is defined as in \eqref{eqF2}. 
\end{defn}
We utilize a similar approach to the one employed in proving Theorem \ref{Bergman kernel} and Theorem \ref{Theorem 1.3} to establish the subsequent theorem.
\begin{thm}\label{Theorem 1.4}
    Let $D \subset \mathbb{C}^n$ be a bounded $C^2$-smooth domain with $0 \in bD$. Suppose $D$ is an extended generalized decoupled domain near $0$.
    Then there exists constant $C(\delta) > 0$ such that
\begin{align}
    \kappa_D(z) &\geq C(\delta) (\operatorname{Re}z_n)^{-2} \Pi_{j = 1}^{k} [f_j^{-1}(\operatorname{Re} z_n)]^{-n_{j}}, \text{ and} \label{Bergman*}\\
    \mathcal{S}_{D}(z) &\geq C(\delta) (\operatorname{Re}z_n)^{-1} \Pi_{j = 1}^{k} [f_j^{-1}(\operatorname{Re} z_n)]^{-n_{j}},\label{Szegö*}
\end{align}
for each $z \in \{w \in D : |w_n| < \delta/2\}$.
\end{thm}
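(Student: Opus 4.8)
\textbf{Proof proposal for Theorem \ref{Theorem 1.4}.}

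The plan is to prove both lower bounds simultaneously by inserting a single explicit competitor into the extremal characterisations
\[
\kappa_D(z)=\sup\bigl\{|g(z)|^2:\ g\in\mathcal O(D),\ \|g\|_{L^2(D)}\le 1\bigr\},\qquad \mathcal S_D(z)=\sup\bigl\{|g(z)|^2:\ \|g\|_{L^2(bD)}\le 1\bigr\};
\]
no pseudoconvexity is needed. Fix $z^0\in D$ with $|z^0_n|<\delta/2$ and put $\tau:=\operatorname{Re}z^0_n$. By \eqref{condition1} one has $\tau>0$, and by \eqref{condition2} the point $z^0$ lies in the coordinate patch $U$, so $\tau=\operatorname{Re}z^0_n>F(z'^0)\ge 0$. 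When $\tau$ is bounded below the asserted right-hand sides are comparable to constants depending on $\delta$ and $D$, while $\kappa_D(z^0)\ge 1/\operatorname{Vol}(D)>0$ and $\mathcal S_D(z^0)\ge 1/\sigma(bD)>0$, so we may assume $\tau$ as small as we wish. Set $a:=-\overline{z^0_n}$, so that $\operatorname{Re}a=-\tau<0$; since $\overline D\subset\{\operatorname{Re}z_n\ge0\}$ the hyperplane $\{z_n=a\}$ misses $\overline D$, and hence for every integer $N\ge2$ the function $h(z):=(z_n-a)^{-N}=(z_n+\overline{z^0_n})^{-N}$ is holomorphic on a neighbourhood of $\overline D$, with $|h(z^0)|^2=(2\tau)^{-2N}$ and, writing $z_n=\xi+i\eta$, $|h(z)|^2=\bigl((\xi+\tau)^2+(\eta-\operatorname{Im}z^0_n)^2\bigr)^{-N}$. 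The integer $N$ will be fixed large, depending only on the types of $f_1,\dots,f_k$.

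Next I would estimate $\|h\|_{L^2(D)}$ and $\|h\|_{L^2(bD)}$. Split $D$ (resp.\ $bD$) into the ``far'' part $\{|z_n|\ge\delta\}$ and the ``near'' part $\{|z_n|<\delta\}$. On the far part $|z_n-a|\ge\delta-|a|=\delta-|z^0_n|>\delta/2$, so $|h|\le(2/\delta)^N$ there and the far part contributes at most a constant $C(\delta,N,D)$ to each integral. By \eqref{condition2} the near part of $D$ equals $\{z\in U:\operatorname{Re}z_n>F(z'),\ |z_n|<\delta\}$, and the near part of $bD$ lies in a graph $\{(z',F(z')+it)\}$ over $\{F(z')<\delta\}$ with surface density $\sqrt{1+|\nabla F|^2}=O(1)$. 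Integrating $|h|^2$ in $z_n$ for fixed $z'$ over the half-plane $\{\operatorname{Re}z_n>F(z')\}$ (resp.\ over the line $\{\operatorname{Re}z_n=F(z')\}$), using $\int_{\mathbb R}(c^2+t^2)^{-N}\,dt=C_N\,c^{\,1-2N}$, reduces the near parts to
\[
\|h\|_{L^2(D)}^2\lesssim C(\delta)+\int_{\{F<\delta\}}\frac{dV(z')}{(F(z')+\tau)^{2N-2}},\qquad \|h\|_{L^2(bD)}^2\lesssim C(\delta)+\int_{\{F<\delta\}}\frac{dV(z')}{(F(z')+\tau)^{2N-1}}.
\]
Here the decoupled form \eqref{eqF2} enters only through the elementary volume bound $\operatorname{Vol}\{z':F(z')<\xi\}\le\prod_{j=1}^k\omega_{n_j}\,[f_j^{-1}(\xi)]^{n_j}$, valid because $\sum_j f_j(|s^j|)<\xi$ forces $|s^j|<f_j^{-1}(\xi)$ for every $j$ ($\omega_{n_j}$ being the volume of the unit ball in $\mathbb R^{n_j}$).

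Integrating this volume bound by layers (integration by parts in the level value $\xi=F(z')$, the boundary term at $\xi=\delta$ being $\le C(\delta)$) turns the two integrals above, up to constants, into $\int_0^\delta(\xi+\tau)^{-(2N-1)}\prod_j[f_j^{-1}(\xi)]^{n_j}\,d\xi$ and $\int_0^\delta(\xi+\tau)^{-2N}\prod_j[f_j^{-1}(\xi)]^{n_j}\,d\xi$. I would estimate the first (the second being identical) by handling $[0,\tau]$ and the dyadic annuli $\{2^m\tau\le\xi<2^{m+1}\tau\}\subset[\tau,\delta]$ separately: on $[0,\tau]$ use $\prod_j[f_j^{-1}(\xi)]^{n_j}\le\prod_j[f_j^{-1}(\tau)]^{n_j}$ and $(\xi+\tau)^{-(2N-1)}\le\tau^{-(2N-1)}$, and on the $m$-th annulus use $(\xi+\tau)^{-(2N-1)}\asymp(2^m\tau)^{-(2N-1)}$ together with a doubling-type growth estimate $\prod_j[f_j^{-1}(2^{m+1}\tau)]^{n_j}\le C\,2^{mQ}\prod_j[f_j^{-1}(\tau)]^{n_j}$ for a fixed exponent $Q$ (with $Q=\sum_{j\ \mathrm{finite}}n_j/(2m_j)$ from the finite-type blocks $f_j(t)\asymp t^{2m_j}$; mildly infinite-type blocks force $f_j^{-1}$ to grow slower than any power and so add an arbitrarily small amount to $Q$). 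Choosing $N$ so large that $2N-1>Q$ makes the geometric series converge and gives $\|h\|_{L^2(D)}^2\lesssim\tau^{-(2N-2)}\prod_j[f_j^{-1}(\tau)]^{n_j}$ and $\|h\|_{L^2(bD)}^2\lesssim\tau^{-(2N-1)}\prod_j[f_j^{-1}(\tau)]^{n_j}$ for $\tau$ small (the term $C(\delta)$ then being dominated); dividing $|h(z^0)|^2=(2\tau)^{-2N}$ by these yields precisely \eqref{Bergman*} and \eqref{Szegö*}.

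The main obstacle I expect is not any single estimate but the final bookkeeping: one must extract from Definitions \ref{finite type} and \ref{infinite type} a uniform (as $\xi\to0^+$) doubling/polynomial growth bound for $\prod_j[f_j^{-1}(\xi)]^{n_j}$ — this is exactly what the qualifier ``mildly'' in ``mildly infinite type'' should be designed to guarantee — and then verify that $N$ can be taken to outrun it and that the powers of $\tau$ and of $f_j^{-1}(\tau)$ reassemble into the stated bounds. Everything else (the choice of test function, the slicing of $D$ and of $bD$, the harmless ``far'' contribution, and the volume bound) is routine, and since none of it uses pseudoconvexity or more than $C^1$-regularity of $bD$ up to an $O(1)$ surface density, the conclusion persists for the merely $C^2$-smooth domains allowed in Theorem \ref{Theorem 1.4}.
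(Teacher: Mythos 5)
Your proposal is correct in outline and, up to the key technical step, follows the same route as the paper: the paper also tests the extremal characterizations of $\kappa_D$ and $\mathcal{S}_D$ with exactly your function (it takes $\phi(w)=(\operatorname{Re}z_n)^n(w_n+\overline{z_n^0})^{-n}$, i.e.\ your $h$ with $N=n$, holomorphic on $\{\operatorname{Re}w_n>0\}\supset D$ by \eqref{condition1}), splits into the far region $\{|w_n|>\delta\}$ given by \eqref{condition2} and the near region where $D$ is the graph domain of $F$, and integrates out $w_n$ to reduce everything to $\int (F(w')+\tau)^{-p}\,\mathrm{d}V(w')$ with $p=2n-2$ (Bergman) and $p=2n-1$ (Szeg\H{o}). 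Where you genuinely diverge is in estimating that integral: the paper uses the pointwise factorization $(f_1+\dots+f_k+\tau)^{n_1+\dots+n_k}\geq\prod_j(f_j(|s^j|)+\tau)^{n_j}$, which splits the integral into one-dimensional radial integrals over the blocks of \eqref{eqF2}, each handled by Proposition \ref{finite type lemma}; this works with the fixed exponent $N=n$ and needs no auxiliary large power. You instead use the sublevel-set volume bound $\operatorname{Vol}\{F<\xi\}\lesssim\prod_j[f_j^{-1}(\xi)]^{n_j}$, a layer-cake integration, and a dyadic summation, which forces you to take $N$ large and to prove a polynomial-growth bound $\prod_j[f_j^{-1}(2^{m+1}\tau)]^{n_j}\lesssim 2^{mQ}\prod_j[f_j^{-1}(\tau)]^{n_j}$. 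Your route avoids Proposition \ref{finite type lemma} entirely and is arguably more geometric, at the price of this growth lemma and of the large-$N$ bookkeeping (note the summability condition should be $2N-2>Q$, not $2N-1>Q$, after multiplying by the annulus length).

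One caveat on the step you yourself flag as the main obstacle: for the mildly infinite type blocks, the justification ``$f_j^{-1}$ grows slower than any power'' is not by itself enough, because the small-$m$ terms of your dyadic sum (already $m=0$) require the plain doubling bound $f_j^{-1}(2\xi)\lesssim f_j^{-1}(\xi)$ near $0$, and sub-power growth of $f_j^{-1}$ does not imply that. This is exactly what the doubling hypothesis on $\Lambda_{f_j}$ in Definition \ref{infinite type} supplies: Lemma \ref{Lambda_f} gives $\Lambda_{f_j}^{-1}(2t)\leq\sigma\Lambda_{f_j}^{-1}(t)$, whence $f_j^{-1}(c\,\xi)\leq\sigma f_j^{-1}(\xi)$ for small $\xi$ (this is the computation \eqref{22} in the paper), and iterating it, combined with the sub-power decay $f_j^{-1}(\xi)\gtrsim_p\xi^{1/p}$ coming from the flatness of $f_j$ at $0$, does yield your growth bound with $Q$ as small as you like from those blocks. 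So the gap is fillable with the paper's own Lemma \ref{Lambda_f}, and with that supplied your argument gives \eqref{Bergman*} and \eqref{Szegö*}; the rest of your outline (far-part constants absorbed for small $\tau$, graph parametrization of $bD$ near $0$ with bounded surface density, and the reduction to small $\operatorname{Re}z_n$) is sound and no more informal than the paper's own very terse proof of this theorem.
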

Numerous contributions have been made in the study of the boundary behaviour of Bergman and Szegő kernels, as well as the Bergman metric, across various classes of domains. 
McNeal \cites{McNeal 1989, McNeal 1994} derived estimates of the Bergman kernel on bounded convex domains of finite type in $\mathbb{C}^n$.
Additionally, he \cite{McNeal 1992} established the lower bound of the Bergman metric near points of finite type in 
bounded domains in $\mathbb{C}^n$. Other notable contributions 
include the works of Catlin \cite{Catlin}, Nagel, Rosay, Stein, and Wainger \cite{Nagel}, Herbort \cites{Herbort 1992, Herbort 1993}, and Diedrich \cites{Diedrich 1993, Diedrich 1994}. Some progress has also been made in estimating these objects on pseudoconvex domains of infinite type.
Nikolov, Pflug, and Zwonek \cites{Nikolov 2011} proved optimal estimates of the Bergman kernel and metric on $\mathbb{C}$-convex domains in $\mathbb{C}^n$ that do not contain complex lines.

Moreover, significant strides have been made in proving the boundary limits of the Bergman kernel and metric on weakly pseudoconvex domains of finite type and certain classes of infinite type domains. For results of this nature, see Bergman \cites{Stefan 1933, Bergman 1970}, Hörmander \cite{Hörmander}, Diedrich \cites{Diederich 1970, Diederich 1973}, Fefferman \cite{Fefferman 1974}, and Kamimoto \cite{Kamimoto}. In a recent article, we \cite{Ravi} prove asymptotic limits of the Bergman kernel and metric at exponentially flat infinite type boundary points of domains in $\mathbb{C}^n$.

Upper bound estimates of the Szeg\H{o} kernel on bounded smooth pseudoconvex domains of finite type in $\mathbb{C}^2$ were established by Nagel, Rosay, Stein, and Wainger \cite{Nagel}, and on bounded convex domains of finite type in $\mathbb{C}^n$ by McNeal \cite{McNeal 1997}. Wu and Xing \cite{Wu 2022} obtained the lower bound of the Szeg\H{o} kernel on the diagonal for any bounded smooth pseudoconvex domain in $\mathbb{C}^2$, although their lower bounds may not be sharp for some classes of domains.

The article is organized as follows. We recall the definitions and some of the properties of the Bergman kernel, metric and the Szeg\H{o} kernel in Section \ref{Prelim}. We prove some technical lemmas in Section \ref{Technical Lemmas}, which are used in the proof of the main theorems.
Section \ref{Main Theorem} is devoted to stating and proving the main results of this article. We shall
use $A \lesssim B$ to mean $A \leq C B$ for some constant $C > 0$, 
independent of certain parameters that will be clear from the context.
\section{Preliminaries}\label{Prelim}
\subsection{Bergman kernel and metric}
Let $D \subset \mathbb{C}^n$ be a bounded domain. The Bergman space of $D$, denoted by $A^2(D)$, consists of holomorphic functions $f:D\to\mathbb{C}$ that are also in $L^2(D)$. Since $A^2(D)$ is a closed subspace of $L^2(D)$, there exists an orthogonal projection of $L^2(D)$ onto $A^2(D)$, called the Bergman projection of $D$. The Bergman projection, denoted by $P_{D}$, is given by an integral kernel, called the Bergman kernel and denoted by $K_D: D\times D\to\mathbb{C}$, i.e.
\begin{align}
    P_{D}f(z) = \int_{D}K_{D}(z, w)f(w) \, \mathrm{d}V(w) \quad\text{for each } f \in L^2(D), z \in D.
\end{align}

The Bergman kernel satisfies the following properties: 
\begin{enumerate}
    \item For a fixed $w\in D$, $K_D(\cdot, w)\in A^2(D)$,
    \item $\overline{K_D(z, w)} = K_D(w,z)$ for each $z,w\in D$, and
    \item $f(z)=\int_D K_D(z, w)f(w)\,\mathrm{d}V(w)$ for each $f\in A^2(D)$ and $z\in D$.
\end{enumerate} 

The Bergman kernel on the diagonal of $D$ is denoted by 
\begin{align}
  \kappa_D(z) = K_D(z,z).  
\end{align}
Moreover, the Bergman metric of $D$ is defined by
\begin{align} 
B_D(z;\xi)=\sqrt{\sum_{i,j=1}^{n+1}g_{i\bar{j}}(z)\xi_i\overline{\xi}_j},
\end{align}
where 
$g_{i\bar{j}}(z) = \partial^2_{z_i\bar{z}_j} \operatorname{log} \kappa_D(z)$, $z \in D$, and $\xi = (\xi_1, \xi_2, \dots, \xi_{n+1}) \in \mathbb{C}^{n+1}$.

The following extremal integrals are useful in computing and estimating $k_D$, and $B_D$. For $z \in D$, $\xi \in \mathbb{C}^{n + 1} \setminus \{0\}$, define
\begin{align}
    I_0^D(z) &= \operatorname{inf}\left\{\int_D|f|^2 \, \mathrm{d}V : f \in A^2(D), f(z) = 1\right\}, \text{ and}\\
    I_1^D(z, \xi) &= \operatorname{inf}\left\{\int_D|f|^2 \, \mathrm{d}V: f \in A^2(D), f(z) = 0, \sum_{j= 1}^{n+1}\xi_j\frac{\partial f}{\partial z_j}(z) = 1\right\}.
\end{align}
It is easy to see from the definitions that if $D'$ is a subdomain of $D$ and $z \in D'$, then
\[I_0^{D'}(z) \leq I_0^{D}(z), \text{ and}\quad I_1^{D'}(z, \xi) \leq I_1^{D}(z, \xi).\]

If $f: D_1 \to D_2$ is a biholomorphism. Then the following transformation formulae hold.
\begin{align}
        I_0^{D_1}(z)|\operatorname{det}J_{\mathbb{C}}f(z)|^2 = I_0^{D_2}(f(z)), \text{ and}\quad
        I_1^{D_1}(z, \xi)|\operatorname{det}J_{\mathbb{C}}f(z)|^2 = I_1^{D_2}(f(z), J_{\mathbb{C}}f(z) \xi),
\end{align}
where $J_{\mathbb{C}}f(p)$ denotes the complex Jacobian matrix of $f$ at $p$. 

The following proposition gives a useful representation of the Bergman kernel on the diagonal, and the Bergman metric in terms of the above extremal integrals.
\begin{prop}[{Bergman \cite{Stefan 1933}}]\label{Fuchs}
    Let $D \subset \mathbb{C}^{n + 1}$ be a domain. Then, for $z \in D$
    and $\xi \in \mathbb{C}^{n + 1} \setminus \{0\}$
\begin{align*}
        \kappa_D(z) &= \frac{1}{I_0^D(z)}, \text{ and} \quad
        B_D^2(z; \xi) = \frac{I_0^D(z)}{I_1^D(z; \xi)}.
\end{align*}
\end{prop}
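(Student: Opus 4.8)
The plan is to deduce both identities from the reproducing property of $K_D$ together with the Riesz representation theorem, following Bergman's original argument. For the first identity, observe that since $D$ is bounded we have $1\in A^2(D)$, so $A^2(D)\neq\{0\}$ and $\kappa_D(z)=\norm{K_D(\cdot,z)}_{L^2(D)}^2>0$ for each $z\in D$. If $f\in A^2(D)$ satisfies $f(z)=1$, then writing the reproducing formula as $f(z)=\langle f,K_D(\cdot,z)\rangle$ and applying Cauchy--Schwarz gives $1\le\norm{f}_{L^2(D)}^2\,\kappa_D(z)$, with equality for the admissible choice $f=K_D(\cdot,z)/\kappa_D(z)$; taking the infimum yields $I_0^D(z)=1/\kappa_D(z)$.

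For the Bergman metric I would first record an elementary Hilbert space fact: if $L$ is a nonzero bounded functional on a Hilbert space $H$ with Riesz representative $v_L$, then $\inf\{\norm{f}^2:L(f)=1\}=\norm{v_L}^{-2}$, attained at $v_L/\norm{v_L}^2$; and if $\Lambda$ is a second bounded functional on which $L$ does not vanish identically, then, working inside the Hilbert space $H_0:=\ker\Lambda$ (whose orthogonal complement is $\mathbb C\,v_\Lambda$, and in which $L|_{H_0}$ has Riesz representative $P_{H_0}v_L$), one gets $\inf\{\norm{f}^2:\Lambda(f)=0,\ L(f)=1\}=\norm{P_{H_0}v_L}^{-2}=\bigl(\norm{v_L}^2-|\langle v_L,v_\Lambda\rangle|^2/\norm{v_\Lambda}^2\bigr)^{-1}$. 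I would apply this with $H=A^2(D)$, $\Lambda(f)=f(z)$ (so $v_\Lambda=K_D(\cdot,z)$, $\norm{v_\Lambda}^2=\kappa_D(z)$), and $L(f)=\sum_{j=1}^{n+1}\xi_j\,\partial f/\partial z_j(z)$: the functional $L$ is bounded on $A^2(D)$ by the sub-mean-value inequality together with the Cauchy estimates, and it does not vanish on $\ker\Lambda$ because $w\mapsto\sum_j\overline{\xi_j}(w_j-z_j)$ lies in $A^2(D)$, vanishes at $z$, and is sent by $L$ to $|\xi|^2>0$ (here $\xi\neq0$). Denoting the Riesz representative of $L$ by $v_\xi$, this gives $I_1^D(z;\xi)=\bigl(\norm{v_\xi}^2-|\langle v_\xi,K_D(\cdot,z)\rangle|^2/\kappa_D(z)\bigr)^{-1}$.

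It remains to identify these Hilbert space quantities with derivatives of $\kappa_D$. Differentiating the reproducing identity under the integral sign---legitimate since $L^2$ convergence of holomorphic functions forces locally uniform convergence of all derivatives---and using that $K_D$ is holomorphic in its first argument and conjugate-holomorphic in its second (so that $\partial_{z_i}\kappa_D$ picks up only the first-slot derivative, $\partial_{\bar z_j}\kappa_D$ only the second, and $\partial_{z_i}\partial_{\bar z_j}\kappa_D$ one of each), one obtains $\langle v_\xi,K_D(\cdot,z)\rangle=v_\xi(z)=\sum_j\overline{\xi_j}\,\partial_{\bar z_j}\kappa_D(z)$ and $\norm{v_\xi}^2=L(v_\xi)=\sum_{i,j}\xi_i\overline{\xi_j}\,\partial_{z_i}\partial_{\bar z_j}\kappa_D(z)$. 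Substituting into the formula for $I_1^D$, using $I_0^D=1/\kappa_D$ and $\bigl|\sum_j\overline{\xi_j}\partial_{\bar z_j}\kappa_D\bigr|=\bigl|\sum_j\xi_j\partial_{z_j}\kappa_D\bigr|$, and comparing with $g_{i\bar j}=\partial_{z_i}\partial_{\bar z_j}\log\kappa_D=\kappa_D^{-1}\partial_{z_i}\partial_{\bar z_j}\kappa_D-\kappa_D^{-2}\,\partial_{z_i}\kappa_D\,\partial_{\bar z_j}\kappa_D$, one checks that $B_D^2(z;\xi)=\sum_{i,j}g_{i\bar j}\xi_i\overline{\xi_j}$ equals $I_0^D(z)/I_1^D(z;\xi)$, as claimed.

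The conceptual content here is elementary, so I expect the only delicate points to be technical: (i) the analytic justifications---that the directional-derivative-evaluation functional is bounded on $A^2(D)$ and that the reproducing formula may be differentiated under the integral, both following from the interior estimate $|\partial^\alpha f(z)|\lesssim\norm{f}_{L^2(D)}$ holding locally uniformly---and (ii) the slot-by-slot bookkeeping that rewrites $\norm{v_\xi}^2$ and $\langle v_\xi,K_D(\cdot,z)\rangle$ as derivatives of $\kappa_D$ and matches the outcome against the complex Hessian of $\log\kappa_D$. Of these, (ii) is the one I expect to require the most care.
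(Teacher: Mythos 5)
Your argument is correct, and there is nothing in the paper to compare it against: the paper states this proposition as a classical result and simply cites Bergman's 1933 paper, giving no proof. What you wrote is the standard proof. Part one (Cauchy--Schwarz against $K_D(\cdot,z)$, with equality at $K_D(\cdot,z)/\kappa_D(z)$) is exactly right, and your Hilbert-space reduction for $I_1^D$ --- minimizing over $\ker\Lambda$ with $\Lambda(f)=f(z)$, identifying the Riesz representative of the derivative functional as $P_{\ker\Lambda}v_\xi$, and computing $\|P_{\ker\Lambda}v_\xi\|^2=\|v_\xi\|^2-|\langle v_\xi,K_D(\cdot,z)\rangle|^2/\kappa_D(z)$ --- combines with the identities $\langle v_\xi,K_D(\cdot,z)\rangle=\sum_j\overline{\xi_j}\partial_{\bar z_j}\kappa_D(z)$ and $\|v_\xi\|^2=\sum_{i,j}\xi_i\overline{\xi_j}\partial_{z_i}\partial_{\bar z_j}\kappa_D(z)$ to match the complex Hessian of $\log\kappa_D$ precisely as you indicate; the analytic justifications you name (interior estimates giving boundedness of point evaluation of derivatives on $A^2(D)$, hence differentiation under the integral) are the right ones, and your check that $L$ does not vanish on $\ker\Lambda$ guarantees the constrained infimum is over a nonempty set and is attained. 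The only caveat worth recording is that the proposition as stated says merely ``domain'': one needs $\kappa_D(z)>0$ (equivalently $A^2(D)$ nontrivial at $z$) for either identity to make sense, which you obtain from boundedness of $D$; that assumption is implicit throughout the paper's preliminaries, so invoking it is consistent with the intended setting.
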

We will use the following localization result of the Bergman kernel and metric to prove the lower and upped bounds of the Bergman kernel and metric.
\begin{thm}[Diederich, Fornaess, and Herbort \cite{Fornaess 1984}]\label{2.2}
Let $D \subset \mathbb{C}^n$ be a bounded pseudoconvex domain and $V \subset \subset U$ be 
    open neighbourhoods of a point $z_0 \in bD$. Then there exists a constant $C \geq 1$ such that 
    \begin{align*}
            &\frac{1}{C} \kappa_{D \cap U}(z) \leq \kappa_{D}(z) \leq \kappa_{D \cap U}(z),\\
            \frac{1}{C}B_{D \cap U}&(z;X) \leq B_{D}(z;X) \leq CB_{D \cap U}(z; X)  
    \end{align*}
for  any $z \in D \cap V$ and any $X \in \mathbb{C}^n$.
\end{thm}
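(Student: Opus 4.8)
The plan is to route everything through the extremal quantities $I_0^D$ and $I_1^D$ of Section~\ref{Prelim} and Proposition~\ref{Fuchs}, and to isolate two ``hard'' comparison inequalities: there should be a constant $C\ge 1$, independent of $z\in D\cap V$ and of $X$, with
\[
I_0^D(z)\le C\,I_0^{D\cap U}(z)\qquad\text{and}\qquad I_1^D(z;X)\le C\,I_1^{D\cap U}(z;X).
\]
Granting these, the theorem follows by bookkeeping. The trivial monotonicity $I_0^{D\cap U}\le I_0^D$ and $I_1^{D\cap U}\le I_1^D$ (recorded just before the statement), combined with $\kappa_D=1/I_0^D$ and $B_D^2=I_0^D/I_1^D$, already gives $\kappa_D\le\kappa_{D\cap U}$; pairing the first hard inequality with $I_0^{D\cap U}\le I_0^D$ gives $\kappa_D\ge C^{-1}\kappa_{D\cap U}$; pairing the first hard inequality with $I_1^{D\cap U}\le I_1^D$ gives $B_D^2\le C\,B_{D\cap U}^2$; and pairing the second hard inequality with $I_0^{D\cap U}\le I_0^D$ gives $B_D^2\ge C^{-1}B_{D\cap U}^2$. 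Thus the whole content is the two $I$-comparisons, and they have the same structure.

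For $I_0^D(z)\le C\,I_0^{D\cap U}(z)$ I would take any competitor $f\in A^2(D\cap U)$ with $f(z)=1$ and build a competitor on all of $D$ by the standard cut-off-and-correct device. Fix once and for all open sets with $\overline V\subset W\subset\subset U$ and a function $\chi\in C_c^\infty(U)$ with $\chi\equiv 1$ on $W$; then $d_0:=\operatorname{dist}(\overline V,bW)>0$, and $\operatorname{supp}\bar\partial\chi$ lies at distance $\ge d_0$ from every point of $V$. The product $\chi f$, extended by zero, lies in $L^2(D)$ with norm $\le\|f\|_{L^2(D\cap U)}$, and $\bar\partial(\chi f)=(\bar\partial\chi)f$ is a smooth, $\bar\partial$-closed $(0,1)$-form on $D$ supported away from $z$ and from $bD$. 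Using that $D$ is pseudoconvex I would solve $\bar\partial u=(\bar\partial\chi)f$ on $D$ by H\"ormander's $L^2$ estimate with weight $\varphi_z(w)=2n\log|w-z|+|w|^2$: since $i\partial\bar\partial\varphi_z\ge i\partial\bar\partial|w|^2$, one controls $\int_D|u|^2e^{-\varphi_z}$ by $\int_D|(\bar\partial\chi)f|^2e^{-\varphi_z}\lesssim\int_{D\cap U}|f|^2$ (on $\operatorname{supp}\bar\partial\chi$ the factor $|w-z|^{-2n}$ is $\le d_0^{-2n}$); boundedness of $D$ makes $e^{-\varphi_z}$ bounded below on $D$, so $\|u\|_{L^2(D)}^2\lesssim\|f\|_{L^2(D\cap U)}^2$ as well; and finiteness of $\int_D|u|^2e^{-\varphi_z}$ forces the holomorphic-near-$z$ function $u$ to satisfy $u(z)=0$. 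Then $g:=\chi f-u$ is holomorphic and $L^2$ on $D$ with $g(z)=1$ and $\|g\|_{L^2(D)}^2\lesssim\|f\|_{L^2(D\cap U)}^2$; taking the infimum over $f$ yields the inequality. For $I_1^D(z;X)\le C\,I_1^{D\cap U}(z;X)$ I would run the identical argument starting from $f$ with $f(z)=0$ and $\sum_jX_j\partial_jf(z)=1$, but with the heavier weight $\varphi_z(w)=(2n+2)\log|w-z|+|w|^2$, which forces $u$ to vanish to second order at $z$, so that $g=\chi f-u$ still has $g(z)=0$ and $\sum_jX_j\partial_jg(z)=1$ (using $\chi\equiv 1$ and $\nabla\chi=0$ near $z$ together with $f(z)=0$).

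The step I expect to be the main obstacle is making the constant genuinely uniform in $z\in D\cap V$ (and in $X$), rather than merely finite for each $z$. This is exactly why the cut-off is fixed with $\overline V\subset W\subset\subset U$, so that $\operatorname{dist}(z,\operatorname{supp}\bar\partial\chi)\ge d_0$ for all $z\in V$; why the weight carries the innocuous bounded factor $e^{-|w|^2}$ instead of anything degenerating near $bD$, so that the curvature lower bound $i\partial\bar\partial\varphi_z\ge\mathrm{Id}$ and the two-sided bounds for $e^{-\varphi_z}$ over $D$ hold with constants independent of $z$; and why it matters that $\bar\partial\chi$ is supported away from $bD$ and from $z$, so that $(\bar\partial\chi)f$ is controlled by $\|f\|_{L^2(D\cap U)}$ uniformly in $z$. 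Once uniformity is secured, the remaining estimates (bounding $\|\chi f\|$ and $\|u\|$, and reading off the value and first derivatives of $g$ at $z$) are routine, and one takes $C$ to be the maximum of the two constants produced above, which is automatically $\ge 1$.
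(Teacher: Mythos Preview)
The paper does not supply its own proof of this statement: Theorem~\ref{2.2} is quoted from Diederich, Forn\ae ss, and Herbort \cite{Fornaess 1984} and used as a black box in the later sections. So there is no ``paper's proof'' to compare against.

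Your sketch is correct and is, in fact, essentially the argument of \cite{Fornaess 1984}. The reduction to the two one-sided comparisons $I_0^D(z)\le C\,I_0^{D\cap U}(z)$ and $I_1^D(z;X)\le C\,I_1^{D\cap U}(z;X)$ via Proposition~\ref{Fuchs} and the trivial monotonicity is exactly right, and the cut-off-and-correct scheme with H\"ormander's $L^2$ estimate on the pseudoconvex domain $D$ and the singular weight $\varphi_z(w)=2n\log|w-z|+|w|^2$ (respectively $(2n+2)\log|w-z|+|w|^2$) is the standard mechanism for producing a global competitor with prescribed value (respectively value and first jet) at $z$. Your discussion of uniformity in $z\in D\cap V$ is also on point: fixing $\chi$ once with $\overline V\subset\{\chi=1\}\subset\operatorname{supp}\chi\subset U$ makes $\operatorname{dist}(z,\operatorname{supp}\bar\partial\chi)$ bounded below independently of $z$, and boundedness of $D$ gives uniform two-sided bounds for $e^{-|w|^2}$. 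One small remark: when you say ``$u$ is holomorphic near $z$'' you are implicitly using that $\bar\partial\chi\equiv 0$ on a fixed neighbourhood of $\overline V$, so $\bar\partial u=0$ there; this is indeed guaranteed by your choice of $\chi$, but it is worth making explicit since it is what allows the non-integrability of $|w-z|^{-2n}$ (resp.\ $|w-z|^{-2n-2}$) to force $u(z)=0$ (resp.\ $u(z)=0$ and $\nabla u(z)=0$).
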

\subsection{Szeg\H{o} kernel}
Let $D$ be a bounded smooth domain in $\mathbb{C}^{n}$. For $\epsilon > 0$, let $D_{\epsilon} = \{z \in D: \delta_{D}(z) > \epsilon\}$, where $\delta_{D}(z)$ denotes the Euclidean distance from $z$ to the boundary $bD$. For $1 < p < \infty$, the Hardy space $H^p(D)$ is the space of holomorphic functions $f$ on $D$ such that
\begin{align}
    \norm{f}_{H^p(D)}^p = \limsup_{\epsilon \to 0^+} \int_{bD_{\epsilon}}|f(z)|^p \, \mathrm{d}\sigma(z) < \infty.
\end{align}
The level sets $bD_{\epsilon}$ in the above definition can be replaced by those of any defining function of $D$ (see Stein \cite{Stein 1972}). A classical result
(see \cite{Stein 1972}*{Theorem $10$}) says that if $f \in H^p(D)$, the non-tangential limit of $f$, denoted by $f^{\star}$, exists for almost every point on $bD$.
Furthermore, we have $f^{\star}\in L^p(bD), \, \norm{f}_{H^p(D)} = \norm{f^{\star}}_{L^p(bD)}$, and
\begin{equation}
    f(x) = \int_{bD}P(x, y)f^{\star}(y)\, \mathrm{d}\sigma(y),
\end{equation}
where $P(x,y)$ is the Poisson kenel of $D$.

The Szeg\H{o} kernel is the reproducing kernel of $H^2(D)$, i.e.,
for each $f \in H^2(D)$, we have
\begin{align}
    f(z) = \int_{bD}f^{\star}(\xi)S_{D}(z,\xi) \, \mathrm{d}\sigma(\xi), \, \, \, \forall z \in D.
\end{align}
The Szeg\H{o} kernel on the diagonal of $D$ is denoted by
\begin{align}
    \mathcal{S}_D(z) = S_{D}(z, z).
\end{align}
It follows from the reproducing property of the Szeg\H{o} kernel that
\begin{align}
    \mathcal{S}_{D}(z) =\operatorname{sup} \left\{|f(z)|^2 : f \in H^2(D), \norm{f}_{H^2(D)} \leq 1\right\}.
\end{align}
It is not known whether the Szeg\H{o} kernel on the diagonal is monotone with respect to the domains, but we have the following substitute due to Chen and Fu \cite{Chen2011}.
\begin{lem}[{\cite{Chen 2011}*{Lemma $2.3$}}]\label{Chen lemma}
    Let $D_1 \subset D_2$ be bounded domains in $\mathbb{C}^n$ with $C^2$-smooth boundaries. Then there exists a constant $C > 0$ such that
    \begin{align}
        \mathcal{S}_{D_2}(z) \leq C \mathcal{S}_{D_1}(z)
    \end{align}
    for each $z \in D_1.$
\end{lem}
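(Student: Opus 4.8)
The plan is to combine the extremal description $\mathcal{S}_D(z)=\sup\{|h(z)|^2: h\in H^2(D),\ \norm{h}_{H^2(D)}\le 1\}$ with a restriction argument. Fix $z\in D_1$ and let $f\in H^2(D_2)$ with $\norm{f}_{H^2(D_2)}\le 1$. Since $f$ is holomorphic on $D_2\supset D_1$, its restriction $g:=f|_{D_1}$ is holomorphic on $D_1$. I claim that $g\in H^2(D_1)$ with $\norm{g}_{H^2(D_1)}^2\le C$ for a constant $C=C(D_1,D_2)$ that does not depend on $f$. Granting this, if $g(z)\ne 0$ then $g/\norm{g}_{H^2(D_1)}$ is admissible in the extremal problem for $D_1$, so $\mathcal{S}_{D_1}(z)\ge |g(z)|^2/\norm{g}_{H^2(D_1)}^2\ge |f(z)|^2/C$; and if $g(z)=f(z)=0$ this inequality is trivial. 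Taking the supremum over all such $f$ gives $\mathcal{S}_{D_1}(z)\ge \mathcal{S}_{D_2}(z)/C$, which is the assertion.

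The heart of the matter is the bound on $\norm{g}_{H^2(D_1)}$. For small $\epsilon>0$ write $bD_{1,\epsilon}=\{w\in D_1:\delta_{D_1}(w)=\epsilon\}$ for the $\epsilon$-level set of $\delta_{D_1}$; since $D_1$ is $C^2$ this is, for small $\epsilon$, a $C^2$ hypersurface of uniformly bounded area, and $bD_{1,\epsilon}\subset D_1\subset D_2$. By the Poisson representation $f(x)=\int_{bD_2}P_{D_2}(x,y)f^{\star}(y)\,\mathrm{d}\sigma(y)$ (valid as $H^2\subset H^1$), together with $\int_{bD_2}P_{D_2}(x,y)\,\mathrm{d}\sigma(y)=1$ and Jensen's inequality applied to the convex function $w\mapsto|w|^2$, one gets the pointwise majorization $|f(x)|^2\le\int_{bD_2}P_{D_2}(x,y)\,|f^{\star}(y)|^2\,\mathrm{d}\sigma(y)$ for every $x\in D_2$. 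Integrating over $x\in bD_{1,\epsilon}$ and applying Tonelli's theorem (all integrands are nonnegative),
\[
\int_{bD_{1,\epsilon}}|f|^2\,\mathrm{d}\sigma\ \le\ \int_{bD_2}|f^{\star}(y)|^2\left(\int_{bD_{1,\epsilon}}P_{D_2}(x,y)\,\mathrm{d}\sigma(x)\right)\mathrm{d}\sigma(y)\ \le\ C\int_{bD_2}|f^{\star}|^2\,\mathrm{d}\sigma\ =\ C\,\norm{f}_{H^2(D_2)}^2 ,
\]
provided we can prove the uniform geometric estimate
\[
\sup_{y\in bD_2}\ \int_{bD_{1,\epsilon}}P_{D_2}(x,y)\,\mathrm{d}\sigma(x)\ \le\ C\qquad\text{for all sufficiently small }\epsilon>0 .
\]
Taking $\limsup_{\epsilon\to 0^+}$ then yields $\norm{g}_{H^2(D_1)}^2\le C\norm{f}_{H^2(D_2)}^2$, as needed.

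Establishing the last display is the main obstacle. I would use the classical two-sided Poisson-kernel estimate for the bounded $C^2$ domain $D_2$, namely $P_{D_2}(x,y)\lesssim \delta_{D_2}(x)/|x-y|^{2n}$, together with the trivial bound $|x-y|\ge\delta_{D_2}(x)$ for $y\in bD_2$. Cover $bD_{1,\epsilon}$ by finitely many pieces. On the part at distance $\ge r_0$ from the pole $y$ (for a fixed small $r_0$) the integrand is bounded and the area is uniformly bounded, so this part contributes $O(1)$. Near $y$, pass to boundary coordinates for $D_2$ in which $bD_2$ is flattened to $\{x_{2n}=0\}$ and $\delta_{D_2}(x)$ is comparable to $x_{2n}$, and write $D_1$ locally as a graph $\{x_{2n}>\psi(x')\}$ with $\psi(0)=0$ and $\psi\in C^2$. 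The key geometric point is that $D_1\subset D_2$ with both boundaries $C^2$ forces any contact point of $bD_1$ with $bD_2$ to be at least a first-order tangency, so $0\le\psi(x')\lesssim|x'|^2$ near $y$; hence on $bD_{1,\epsilon}$ one has $\delta_{D_2}(x)\lesssim|x'|^2+\epsilon$ while $|x-y|^{2n}\gtrsim\max(|x'|,\epsilon)^{2n}$, and the relevant integral is dominated by
\[
\int_{|x'|\lesssim 1}\frac{|x'|^2+\epsilon}{\max(|x'|,\epsilon)^{2n}}\,\mathrm{d}x' ,
\]
which is bounded uniformly in $\epsilon$: the $|x'|^2$ contribution and the $\epsilon$ contribution are each controlled by a one-dimensional integral in $|x'|$ after splitting at $|x'|=\epsilon$. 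Summing the bounds over the finitely many coordinate patches gives the uniform estimate and completes the proof. The only non-elementary ingredient is the classical Poisson-kernel upper bound for $C^2$ domains; the rest is bookkeeping with the geometry of nested smooth domains.
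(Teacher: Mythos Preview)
The paper does not prove this lemma; it is quoted verbatim from Chen and Fu \cite{Chen 2011}, so there is no in-paper argument to compare against. Your approach---restrict $f\in H^2(D_2)$ to $D_1$, control $\|f\|_{H^2(D_1)}$ via the Poisson representation and Jensen, and reduce to the uniform geometric bound on $\int_{bD_{1,\epsilon}}P_{D_2}(x,y)\,d\sigma(x)$---is exactly the strategy of Chen--Fu, and your final displayed integral is indeed bounded uniformly in $\epsilon$.

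There is, however, one genuine gap in the uniformity over $y\in bD_2$. You flatten near $y$ and write $bD_1=\{x_{2n}=\psi(x')\}$ with $\psi(0)=0$; this assumes $y$ is a contact point of $bD_1$ and $bD_2$. For $y$ close to but not on $bD_1\cap bD_2$ you only have $\psi(0)=a>0$, and your key bound $\psi(x')\lesssim|x'|^2$ fails. Using merely $\psi\le\|\psi\|_\infty$ and $|x-y|\ge\max(|x'|,\epsilon)$ gives an integral of order $1/\epsilon$, so this case cannot be dismissed. The fix is the standard consequence of $\psi\ge 0$, $\psi\in C^2$: with $M=\|D^2\psi\|_\infty$ one has $|\nabla\psi|^2\le 2M\psi$ pointwise, hence $\psi(x')\lesssim a+|x'|^2$ uniformly and, for $|x'|\le c\sqrt{a}$, also $\psi(x')\ge a/4$. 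Splitting the local integral at $|x'|\sim\sqrt{a}$ then gives: on the inner part $\eta:=\psi+\epsilon\gtrsim a+\epsilon$ and a rescaling $r=(a+\epsilon)s$ bounds the contribution by $\int_0^\infty s^{2n-2}(1+s^2)^{-n}\,ds$; on the outer part $a\lesssim|x'|^2$ and your displayed estimate applies verbatim. This yields the required bound uniformly in both $y$ and $\epsilon$, so the gap is real but closes with the same circle of ideas you already invoke.
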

We use the above lemma and the following theorem of Chen and Fu \cite{Chen 2011} to prove the upper bound of the Szeg\H{o} kernel on the diagonal. In this theorem, Chen and Fu \cite{Chen 2011} 
estimate the Szeg\H{o} kernel in terms of the Bergman kernel and distance to the boundary function $\delta_D$. 
\begin{thm}[{\cite{Chen 2011}*{Theorem $1.3$}}]\label{Chen theorem}
    Let $D \subset \subset \mathbb{C}^n$ be a bounded convex domain with $C^2$ boundary. Then there exists positive constants $C_1$ and $C_2$ such that
    \begin{align}
        C_1\delta_D(z) \leq \mathcal{S}_D(z)/\kappa_D(z) \leq C_2 \delta_D(z).
    \end{align}
\end{thm}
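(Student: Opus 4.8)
The plan is to run the two inequalities separately, each time passing through the extremal descriptions $\kappa_D(z)=1/I_0^D(z)$ (Proposition \ref{Fuchs}) and $1/\mathcal{S}_D(z)=\inf\{\norm{f}_{H^2(D)}^2:f\in H^2(D),\ f(z)=1\}$ (from the extremal characterization of $\mathcal{S}_D$ stated above). In both directions the idea is to convert a competitor for one extremal problem into a competitor for the other while paying, respectively collecting, exactly one factor $\delta_D(z)$; convexity enters only through two elementary geometric facts. Throughout, $\langle\,\cdot\,,\,\cdot\,\rangle$ denotes the real inner product on $\mathbb{C}^n\cong\mathbb{R}^{2n}$, and $bD_\tau=\{w\in D:\delta_D(w)=\tau\}$.

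\emph{Upper bound $\mathcal{S}_D(z)\le C_2\,\delta_D(z)\,\kappa_D(z)$.} Let $p\in bD$ be the nearest boundary point to $z$, so $\delta_D(z)=|z-p|$, let $N$ be the outward unit normal at $p$, and use convexity to get the supporting half-space $\overline D\subset\{x:\langle x-p,N\rangle\le 0\}$. Write the real-linear functional $x\mapsto\langle x,N\rangle$ as $\operatorname{Re}L$ for a $\mathbb{C}$-linear $L$ on $\mathbb{C}^n$. Given any $h\in H^2(D)$ with $h(z)=1$, set $t=1/(2\delta_D(z))$ and $\phi(w)=h(w)\,e^{tL(w-z)}$; then $\phi$ is holomorphic, $\phi(z)=1$, and $|\phi(w)|=|h(w)|\,e^{t\langle w-z,N\rangle}$. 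If $w\in bD_\tau$, then $w+\tau N\in\overline D$, so the supporting half-space yields $\langle w-z,N\rangle\le\delta_D(z)-\tau$. Slicing $D$ by the level sets $bD_\tau$ (coarea formula, $|\nabla\delta_D|=1$ a.e.) and using that the boundary $L^2$-means of an $H^2$ function along the convex exhaustion $\{\delta_D>\tau\}$ satisfy $\int_{bD_\tau}|h|^2\,\mathrm{d}\sigma_\tau\le\norm{h}_{H^2(D)}^2$ (a classical property of Hardy spaces on such domains, cf.\ \cite{Stein 1972}), we obtain
\[
\norm{\phi}_{L^2(D)}^2\le\norm{h}_{H^2(D)}^2\int_0^\infty e^{2t(\delta_D(z)-\tau)}\,\mathrm{d}\tau=e\,\delta_D(z)\,\norm{h}_{H^2(D)}^2 .
\]
Taking $h=S_D(\cdot,z)/S_D(z,z)$, so that $\norm{h}_{H^2(D)}^2=1/\mathcal{S}_D(z)$, and recalling $\phi(z)=1$, we get $I_0^D(z)\le\norm{\phi}_{L^2(D)}^2\le e\,\delta_D(z)/\mathcal{S}_D(z)$, which rearranges to $\mathcal{S}_D(z)\le e\,\delta_D(z)\,\kappa_D(z)$.

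\emph{Lower bound $\mathcal{S}_D(z)\ge C_1\,\delta_D(z)\,\kappa_D(z)$.} Fix, say, $\lambda=1/2$. Since $D$ is convex and $z\in D$, the contraction $\Phi(w)=z+\lambda(w-z)$ maps $\overline D$ into $D$, so for any $g\in A^2(D)$ with $g(z)=1$ the function $h:=g\circ\Phi$ is holomorphic on a neighbourhood of $\overline D$; in particular $h\in H^2(D)$ and $h(z)=1$. A change of variables identifies $\norm{h}_{H^2(D)}^2$ with $\lambda^{-(2n-1)}\int_{bD'}|g|^2\,\mathrm{d}\sigma_{bD'}$, where $D'=\Phi(D)\subset\subset D$. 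For $q\in bD'$ the ball $B\!\left(q,(1-\lambda)\delta_D(z)\right)$ lies in $D$, since each of its points is a convex combination of a point of the ball $B(z,\delta_D(z))\subset D$ with a point of $\overline D$; hence $\operatorname{dist}(bD',bD)\ge(1-\lambda)\delta_D(z)$. The sub-mean-value inequality for the plurisubharmonic function $|g|^2$, Fubini's theorem, and the uniform surface-growth bound $\sigma_{bD'}\!\left(bD'\cap B(x,r)\right)\lesssim r^{2n-1}$ (valid because $bD'$ is a fixed rescaling of the $C^2$ hypersurface $bD$) then give $\int_{bD'}|g|^2\,\mathrm{d}\sigma_{bD'}\lesssim\delta_D(z)^{-1}\norm{g}_{L^2(D)}^2$, with implied constant depending only on $D$. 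Hence $1/\mathcal{S}_D(z)\le\norm{h}_{H^2(D)}^2\lesssim\delta_D(z)^{-1}\norm{g}_{L^2(D)}^2$; taking the infimum over admissible $g$ yields $1/\mathcal{S}_D(z)\lesssim\delta_D(z)^{-1}I_0^D(z)=\left(\delta_D(z)\kappa_D(z)\right)^{-1}$, i.e.\ $\mathcal{S}_D(z)\gtrsim\delta_D(z)\,\kappa_D(z)$.

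\emph{Main obstacle.} The analysis is routine once the geometry is in place; the delicate points are exactly the two convexity inputs: for the upper bound, the estimate $\langle w-z,N\rangle\le\delta_D(z)-\tau$ on $bD_\tau$ together with the monotonicity (or at least uniform boundedness by $\norm{h}_{H^2(D)}^2$) of the boundary $L^2$-means along the exhaustion $\{\delta_D>\tau\}$; for the lower bound, the estimate $\operatorname{dist}(bD',bD)\ge(1-\lambda)\delta_D(z)$ together with the uniform surface-measure growth of $bD'$. One must also check that every constant produced depends only on $D$ and $n$ and is uniform as $z$ ranges over all of $D$, including the interior regime where $\delta_D(z)$ is comparable to the inradius.
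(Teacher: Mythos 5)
This statement is imported verbatim from Chen--Fu \cite{Chen 2011}*{Theorem 1.3}; the paper gives no proof of it, so there is nothing internal to compare against. Your argument is a legitimate self-contained reconstruction, and it is close in spirit to the original: an exponential of a supporting linear functional to trade an $H^2$ competitor for an $A^2$ competitor at the cost of one factor of $\delta_D(z)$ (upper bound), and a dilation toward $z$ plus the sub-mean-value inequality to go the other way (lower bound). The geometric inputs are correctly verified: $\langle w-z,N\rangle\le\delta_D(z)-\tau$ on $bD_\tau$ does follow from $w+\tau N\in\overline D$ and $z=p-\delta_D(z)N$, and $\operatorname{dist}(bD',bD)\ge(1-\lambda)\delta_D(z)$ follows from the convex-combination argument you give.

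Two points deserve care. First, the inequality $\int_{bD_\tau}|h|^2\,\mathrm{d}\sigma_\tau\le\norm{h}_{H^2(D)}^2$ is stated too strongly: with the $\limsup_{\epsilon\to0^+}$ definition of the $H^2$ norm used in this paper, the level-set means need not be monotone in $\tau$, and what you actually need (and what the classical theory on $C^2$ domains provides, via the least harmonic majorant of $|h|^2$ and uniform bounds on the Poisson integrals over the surfaces $bD_\tau$) is $\int_{bD_\tau}|h|^2\,\mathrm{d}\sigma_\tau\le C\norm{h}_{H^2(D)}^2$ with $C$ independent of $\tau$ over the whole range up to the inradius. Since only a multiplicative constant is at stake, this does not break the proof, but it should be stated with the constant and the reference made precise. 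Second, $bD'$ is not a \emph{fixed} rescaling of $bD$ --- the dilation is centred at $z$, which varies --- so the surface-growth bound $\sigma_{bD'}(bD'\cap B(x,r))\lesssim r^{2n-1}$ needs a uniform justification; the cleanest one uses only convexity: $bD'\cap B(x,r)\subset\partial\bigl(D'\cap B(x,r)\bigr)$ and the surface area of a convex body is monotone under inclusion, so the quantity is at most $\sigma(\partial B(x,r))=c_nr^{2n-1}$. With these two repairs the proof is complete and uniform in $z$, the interior regime being trivial since both $\mathcal{S}_D$ and $\delta_D\kappa_D$ are bounded above and below on compact subsets.
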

\subsection{\texorpdfstring{$\overline{\partial}_b$} 
--Operator}
Consider a bounded domain 
$D \subset \mathbb{C}^n$ 
with smooth boundary, and let $\rho$ be a smooth
defining function for $D$. In this context, we introduce the
tangential Cauchy–Riemann operator 
$\overline{\partial}_b$ following the notation in Chen and Shaw \cite{Chen}*{Theorem $1.3$}.

Let $\Lambda_{p,q}(\mathbb{C}^n)$ denote the set of smooth $(p,q)$ forms on $\mathbb{C}^n$. Define $I_{p,q} = \cup_{z \in U}{I^z_{p, q}}$ in a neighbourhood $U$ of $bD$, where each element in the fiber $I_{p, q}^z, \, z \in U$, can be expressed as
\begin{align}
    \rho u_1 + \overline{\partial}\rho \wedge u_2,
\end{align}
where $u_1 \in \Lambda_{p, q}(\mathbb{C}^n)$ and $u_2 \in \Lambda_{p, q - 1}(\mathbb{C}^n)$.
Let $\Lambda_{p, q}(bD)$ be
the orthogonal complement of $I_{p, q}|_{bD}$ in $\Lambda_{p, q}(\mathbb{C}^n)|_{bD}$. Let $\varepsilon_{p,q}(bD)$ be the space of smooth sections of 
$\Lambda_{p, q}(bD)$ over $bD$, essentially capturing $(p, q)$ forms on $bD$. The map $\tau : \Lambda_{p, q}(\mathbb{C}^n) \to \Lambda_{p, q}(bD)$ is defined by restricting a $(p,q)$ form $u$ to $bD$ and then projecting the restriction to $\Lambda_{p, q}(bD)$ when $q>0$. For $q=0$, $\tau$ is the standard restriction map. For $u \in \varepsilon_{p, q}(bD)$, define $\overline{\partial}_b u$ by choosing $\widetilde{u} \in \Lambda_{p, q}(\mathbb{C}^n)$ such that $\tau \widetilde{u} = u$, and set $\overline{\partial}_b u$ to be $\tau \overline{\partial}\widetilde{u}$. The independence of the definition of $\overline{\partial}_b$ on the choice of $\widetilde{u}$ is evident.

Consider the spaces $L^2(bD)$ and $L_{p, q}^2(bD)$, where $L^2(bD)$ consists of square-integrable functions on $bD$ with respect to the induced metric from $\mathbb{C}^n$ to $bD$, and $L_{p, q}^2(bD)$ consists of $(p, q)$ forms on $bD$ with $L^2$ coefficients.

The operator $\overline{\partial}_b$ can be extended to a linear, closed, densely defined operator
\begin{align*}
    \overline{\partial}_b : L_{p, q}^2(bD) \to L_{p, q + 1}^2(bD).
\end{align*}
A function $f \in L^2(bD)$ is said to be a CR-function if $\overline{\partial}_b f = 0$ holds in the sense of distributions. 

The following Theorem and Lemma of Chen and Shaw \cite{Chen} are useful in proving a localization result for the Szeg\H{o} kernel (see Theorem \ref{locSzegö}).
\begin{thm}[{\cite{Chen}*{Theorem $9.2.5$}}]\label{2.5} Let $D$ be a bounded pseudoconvex domain in $\mathbb{C}^n$ with smooth boundary. For and $\alpha \in \varepsilon_{p, q}(bD)$, where $0 \leq p \leq n$ and $1 \leq q \leq n - 1$, the following conditions are equivalent:
\begin{enumerate}
    \item There exists $u \in \varepsilon_{p, q - 1}$ satisfying $\overline{\partial}_b u = \alpha$ on $bD$.
    \item There exists $\Tilde{\alpha} \in C^{\infty}_{p, q}(\overline{D})$ with $\tau \Tilde{\alpha} = \alpha \, (\text{or } \Tilde{\alpha} = \alpha \text{ on } bD)$ and $\overline{\partial}\Tilde{\alpha} = 0$ in $D$.
    \item $\int_{bD} \alpha \wedge \psi = 0$,$\quad \phi \in \varepsilon_{n - p, n - q -1}(bD) \cap \text{Ker}\left( \overline{\partial}_b\right)$.
\end{enumerate}   
When $1 \leq q < n - 1$, the above conditions are equivalent to 
\begin{align*}
    \overline{\partial}_b\alpha = 0 \text{ on } bD.
\end{align*}
\end{thm}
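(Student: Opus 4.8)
The statement is the standard characterization of $\overline{\partial}_b$-exactness on a smooth bounded pseudoconvex boundary, and I would establish it by proving the cycle $(2)\Rightarrow(1)\Rightarrow(3)\Rightarrow(2)$, drawing on three inputs: solvability of $\overline{\partial}$ with solutions smooth up to the boundary on smooth bounded pseudoconvex domains (Kohn), Stokes' theorem on the compact manifold $bD$, and a Hahn--Banach duality argument for the last and hardest implication. Throughout, the index $p$ is an inert spectator (one simply carries along the holomorphic factor), so the argument runs uniformly over $0\le p\le n$.

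$(2)\Rightarrow(1)$: given $\widetilde{\alpha}\in C^{\infty}_{p,q}(\overline{D})$ with $\tau\widetilde{\alpha}=\alpha$ and $\overline{\partial}\widetilde{\alpha}=0$ on $D$, use the global regularity of $\overline{\partial}$ on the smooth bounded pseudoconvex $D$ to get $v\in C^{\infty}_{p,q-1}(\overline{D})$ with $\overline{\partial}v=\widetilde{\alpha}$ on $D$; then $u:=\tau v$ lies in $\varepsilon_{p,q-1}(bD)$ and $\overline{\partial}_b u=\tau(\overline{\partial}v)=\tau\widetilde{\alpha}=\alpha$. $(1)\Rightarrow(3)$: write $\alpha=\overline{\partial}_b u$ and let $\phi\in\varepsilon_{n-p,n-q-1}(bD)\cap\ker\overline{\partial}_b$. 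A bidegree count shows $\partial_b u\wedge\phi$ and $u\wedge\partial_b\phi$ vanish on $bD$ (each is of holomorphic degree $n+1$), so the top-degree component of $d(u\wedge\phi)$ equals $\overline{\partial}_b u\wedge\phi$ plus a sign times $u\wedge\overline{\partial}_b\phi$; integrating over the closed manifold $bD$ and using $\overline{\partial}_b\phi=0$ gives $\int_{bD}\alpha\wedge\phi=0$. Applying the same identity to $\overline{\partial}_b\alpha\wedge\psi$ for arbitrary $\psi$, and noting that $\overline{\partial}_b\psi$ is $\overline{\partial}_b$-closed, one sees that $(3)$ also forces $\overline{\partial}_b\alpha=0$.

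$(3)\Rightarrow(2)$ is the substantive step, and the one I expect to be the main obstacle. Fix any smooth extension $\widetilde{\alpha}_0\in C^{\infty}_{p,q}(\overline{D})$ of $\alpha$. Since $(3)$ already gives $\overline{\partial}_b\alpha=0$, the restriction of $\overline{\partial}\widetilde{\alpha}_0$ to $bD$ is tangentially zero, so the task is to correct $\widetilde{\alpha}_0$ by a form that restricts to zero on $bD$ and renders it $\overline{\partial}$-closed. This amounts to a $\overline{\partial}$-equation on $D$ whose solvability within the class of forms with the prescribed boundary restriction is controlled precisely by $(3)$: via a Serre-type duality, the orthogonality in $(3)$ places $\alpha$ in the closure of the boundary restrictions of $\overline{\partial}$-closed forms on $\overline{D}$, and the closed-range property of $\overline{\partial}$ furnished by the $\overline{\partial}$-Neumann estimates on the smooth bounded pseudoconvex $D$ upgrades this from density to genuine membership; Kohn's boundary regularity then produces an extension in $C^{\infty}_{p,q}(\overline{D})$. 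The delicate point, where essentially all the analysis sits, is verifying that $(3)$ matches exactly the cokernel of the relevant $\overline{\partial}$-operator, so that this upgrade is legitimate.

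For $1\le q<n-1$, the equivalence with $\overline{\partial}_b\alpha=0$ requires, beyond the necessity already observed, the vanishing of the interior-degree tangential cohomology on a pseudoconvex boundary, $H^{p,q}_b(bD)=0$ for $1\le q\le n-2$; this is Kohn's solution of the $\overline{\partial}_b$-Neumann problem in that range (equivalently, one solves $\overline{\partial}$ with support conditions on the two sides of $bD$). With that in hand, $\overline{\partial}_b\alpha=0$ makes $(3)$ hold vacuously, and $(3)\Rightarrow(2)\Rightarrow(1)$ closes the loop.
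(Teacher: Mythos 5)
First, a point of comparison: the paper does not prove this statement at all --- it is quoted verbatim as Theorem 9.2.5 of Chen--Shaw and used as a black box (alongside Lemma \ref{2.6}) in the proof of the localization Theorem \ref{locSzegö}. So there is no in-paper argument to measure you against; your proposal has to stand on its own as a proof of the cited result.

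On its merits, your sketch gets the two easy implications right: $(2)\Rightarrow(1)$ via Kohn's global regularity for $\overline{\partial}$ on a smooth bounded pseudoconvex domain, and $(1)\Rightarrow(3)$ via the bidegree count and Stokes on the closed manifold $bD$ are both correct and are the standard steps. The genuine gap is $(3)\Rightarrow(2)$, which you yourself flag as ``the main obstacle'' and then do not close: invoking ``a Serre-type duality'' plus closed range of $\overline{\partial}$ and asserting that the moment condition ``matches exactly the cokernel of the relevant $\overline{\partial}$-operator'' is precisely the content that needs to be proved, not a reduction of it. As written, the argument is circular at that point --- you are assuming the duality statement that is equivalent to the implication. (In Chen--Shaw this direction is carried out concretely, by correcting an arbitrary smooth extension through $\overline{\partial}$-problems attached to the two sides of $bD$ / a jump-formula argument, with the orthogonality condition $(3)$ entering as the compatibility condition for solvability in top degree; some version of that analysis has to appear.) A smaller but real slip is the final paragraph: when $1\le q<n-1$, the condition $\overline{\partial}_b\alpha=0$ does not make $(3)$ hold ``vacuously.'' You must either apply the vanishing of the intermediate-degree tangential cohomology to $\phi$ (writing $\phi=\overline{\partial}_b\psi$ with $n-q-1$ also in the range $1,\dots,n-2$ and integrating by parts), or apply it directly to $\alpha$ to obtain $(1)$ --- which is exactly what Lemma \ref{2.6} of the paper provides; either way the equivalence is a consequence of Kohn's solvability, not a vacuity. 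So the proposal is a reasonable roadmap but, as a proof, incomplete at its central step.
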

\begin{lem}[{\cite{Chen}*{Lemma $9.3.7$}}]\label{2.6}
    Let $D$ be a bounded pseudoconvex domain in $\mathbb{C}^n$ with smooth boundary. Let $\alpha \in C^{\infty}_{p, q}(bD)$, where $0 \leq p \leq n, \, 1 \leq q \leq n - 2$, such that $\overline{\partial}_b{\alpha} = 0$ on $bD$. For every nonnegative integer $s$, there exists $u_s \in W^s_{p, q - 1}(bD)$ satisfying $\overline{\partial}_b u_s = \alpha$ on $bD$. Furthermore, there exists a constant $C_s$ independent of $\alpha$ such that 
    \begin{align*}
        \norm{u_s}_{s(bD)} \leq C_s \norm{\alpha}_{s(bD)}.
    \end{align*}
    When $q = n - 1, \, \alpha \in C^{\infty}_{p, n - 1}(bD)$ and $\alpha$ satisfies 
    \begin{align*}
        \int_{bD} \alpha \wedge \phi = 0, \quad \phi \in C_{n -  p, 0}^{\infty}(bD) \cap \operatorname{Ker}\left(\overline{\partial}_b\right),
    \end{align*}
    the same conclusion holds.
\end{lem}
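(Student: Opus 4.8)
The plan is to transfer the tangential $\overline{\partial}_b$-equation on $bD$ to an ordinary $\overline{\partial}$-equation on $D$, solve the latter with Sobolev control via the $\overline{\partial}$-Neumann machinery, and then take a boundary trace.

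\textbf{Step 1: a $\overline{\partial}$-closed extension with boundary control.} Given $\alpha \in C^{\infty}_{p,q}(bD)$, I would first produce $\widetilde{\alpha} \in C^{\infty}_{p,q}(\overline{D})$ with $\tau\widetilde{\alpha} = \alpha$ and $\overline{\partial}\widetilde{\alpha} = 0$ in $D$. For $1 \le q \le n-2$ the hypothesis $\overline{\partial}_b\alpha = 0$ is, by the last assertion of Theorem~\ref{2.5}, equivalent to condition (2) there, i.e.\ to the existence of such a $\widetilde{\alpha}$; for $q = n-1$ the stated orthogonality $\int_{bD}\alpha\wedge\phi = 0$ for all $\overline{\partial}_b$-closed $\phi \in C^{\infty}_{n-p,0}(bD)$ is condition (3) of Theorem~\ref{2.5}, again equivalent to (2). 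What has to be added is a \emph{quantitative} form of this: one wants $\widetilde{\alpha}$ chosen so that $\|\widetilde{\alpha}\|_{W^{s+1/2}(D)} \le C_s\|\alpha\|_{s(bD)}$ for every $s$. I would obtain this by taking a fixed Sobolev extension operator $E$ (so $\|E\alpha\|_{W^{s+1/2}(D)} \lesssim_s \|\alpha\|_{s(bD)}$), noting that $\overline{\partial}_b\alpha = 0$ forces the tangential part of $\overline{\partial}(E\alpha)$ to vanish on $bD$, removing the remaining normal obstruction by a finite Taylor correction in powers of the defining function $\rho$, and finally correcting to exact $\overline{\partial}$-closedness by solving one auxiliary $\overline{\partial}$-equation in $D$ using the estimate of Step~2 — none of which worsens the norm.

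\textbf{Step 2: $\overline{\partial}$ on $D$ with Sobolev estimates.} Since $D$ is bounded, pseudoconvex and smoothly bounded, Kohn's weighted $\overline{\partial}$-Neumann estimates provide, for each $s$, a solution operator of order $s$: every $\overline{\partial}$-closed $g \in W^s_{p,q}(D)$ admits $v \in W^s_{p,q-1}(D)$ with $\overline{\partial}v = g$ and $\|v\|_{W^s(D)} \le C_s\|g\|_{W^s(D)}$, obtained from the Neumann operator $N_t$ with a weight $t = t(s)$ chosen large enough to be regular up to order $s$. Applying this with $g = \widetilde{\alpha} \in W^{s+1/2}_{p,q}(D)$ yields $v \in W^{s+1/2}_{p,q-1}(D)$ with $\overline{\partial}v = \widetilde{\alpha}$ and $\|v\|_{W^{s+1/2}(D)} \le C_s\|\widetilde{\alpha}\|_{W^{s+1/2}(D)}$.

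\textbf{Step 3: trace to the boundary.} Set $u_s := \tau\big(v|_{bD}\big) \in W^s_{p,q-1}(bD)$; the trace theorem gives $\|u_s\|_{s(bD)} \lesssim_s \|v\|_{W^{s+1/2}(D)}$. Since $v$ is itself an extension of $u_s$ through $bD$, the definition of $\overline{\partial}_b$ yields $\overline{\partial}_b u_s = \tau(\overline{\partial}v)|_{bD} = \tau\widetilde{\alpha}|_{bD} = \alpha$. Concatenating the three estimates gives $\|u_s\|_{s(bD)} \le C_s\|\alpha\|_{s(bD)}$ with $C_s$ independent of $\alpha$, which is the assertion; the $q = n-1$ case goes through verbatim once Step~1 is carried out under the orthogonality hypothesis.

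\textbf{Expected main obstacle.} The substance is in Step~1: upgrading the qualitative Kohn--Rossi-type extension of Theorem~\ref{2.5} to one depending on $\alpha$ with uniform boundary-Sobolev bounds, which forces one to interleave the extension with the interior solve rather than invoke Theorem~\ref{2.5} as a black box. The $q = n-1$ endpoint is the second delicate point, since $\overline{\partial}_b$-closedness is then vacuous and solvability genuinely needs the moment condition, transmitted through the duality pairing behind condition (3) of Theorem~\ref{2.5}. Finally, on a general smoothly bounded pseudoconvex $D$ exact global regularity of $N$ need not hold, so the constant is forced to depend on $s$ — which is precisely why the lemma is phrased with an $s$-dependent $C_s$ and a separate solution $u_s$ at each level.
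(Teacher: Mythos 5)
This statement is not proved in the paper at all: it is quoted verbatim from Chen--Shaw (Lemma 9.3.7) and used as a black box in the proof of the Szeg\H{o} localization theorem, so there is no internal argument to compare yours against. Judged on its own terms, your outline does follow the standard route behind the cited result (extension of $\alpha$ into $\overline{D}$, Kohn's weighted $\overline{\partial}$-Neumann estimates giving exact $W^s$ regularity for $t=t(s)$ large, then a boundary trace), and your closing remarks about the $q=n-1$ moment condition and about why the constant must depend on $s$ are accurate.

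However, Step 1 as you describe it has a genuine gap, and it is exactly where the substance lies. If you take $E\alpha$ plus finite jet corrections and then subtract a solution $\beta$ of an auxiliary equation $\overline{\partial}\beta=\overline{\partial}(E\alpha+\cdots)$ to force exact closedness, the resulting form has tangential part $\alpha-\tau\beta$ on $bD$, and nothing in the construction makes $\tau\beta$ vanish; so you do not obtain a form $\widetilde{\alpha}$ with \emph{both} $\tau\widetilde{\alpha}=\alpha$ and $\overline{\partial}\widetilde{\alpha}=0$, and Step 3 then only yields $\overline{\partial}_b u_s=\alpha-\tau\beta$. The corrections that preserve the trace are precisely those in the ideal $I_{p,q}$ (i.e.\ of the form $\rho u_1+\overline{\partial}\rho\wedge u_2$), and with such corrections one can only make $\overline{\partial}\widetilde{\alpha}$ vanish to high \emph{finite} order on $bD$ with norm control, not identically; the actual proof must therefore be organized so that the interior $\overline{\partial}$-solve absorbs this non-closed remainder rather than presupposing an exactly closed extension with the correct trace, and this bookkeeping (together with the separate duality argument at $q=n-1$, where condition (3) of Theorem \ref{2.5} replaces $\overline{\partial}_b$-closedness) is the real content of Chen--Shaw's lemma. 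A smaller point: at $s=0$ your chain $g\in W^{1/2}(D)$, $v\in W^{1/2}(D)$, trace in $L^2(bD)$ fails at the endpoint of the trace theorem, so you should work at least at the level $W^{s+1}(D)$. Since the paper's intent is simply to cite the lemma, none of this is needed for the paper itself, but as a proof of the statement your sketch is incomplete at the extension step.
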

\subsection{Finite and Mildly Infinite Type}
In this subsection, we give the definitions of functions falling under two categories: finite type at zero and mildly infinite type at zero. 
\begin{defn}\label{finite type}
    A function \(f : [0,\infty) \to [0, \infty)\) is said to be \emph{finite type at zero} if it satisfies the following conditions:
    \begin{enumerate}
        \item \(f\) is continuous and strictly increasing on the interval \([0, \infty)\), and
        \item There exist \(m > 1\) and \(c > 0\) such that
        \[\lim_{x \to 0^+} \frac{f(x)}{x^m} = c > 0.\]
    \end{enumerate}
\end{defn}
It is easy to see from the definition that if $f$ is finite type at zero, then $f(0) = 0$ and $f(x) > 0$ for each $x \in (0, \infty)$.
\begin{exmp}
    For $m > 1$, $f(x) = x^{m}$ is finite type at zero.
\end{exmp}
\begin{defn}
An increasing function $g : [0, R] \to \mathbb{R}$ is said to satisfy a \emph{doubling
condition} if $g(0) = 0$ and there exists a constant $\sigma > 1$ such that
\[2g(x) \leq g(\sigma x) \text{ for each } x \in [0, R/\sigma].\]
We will call the constant $\sigma > 1$ a \emph{doubling constant} for $g$.    
\end{defn}
For a strictly increasing function $f : [0, \infty) \to [0, \infty)$, define
    \[ \Lambda_f(x) := \begin{cases} 
      -1/ \operatorname{log}(f(x)), &  \text{if } 0 < x < f^{-1}(1), \text{ and}\\
      0, & \text{if } x = 0.
    \end{cases}
    \]
\begin{defn}\label{infinite type}
    A function $f : [0, \infty) \to [0, \infty)$ is said to be \emph{mildly infinite type at zero} if it satisfies the following conditions:
    \begin{enumerate}
        \item $f$ is smooth and strictly increasing on the interval $[0, \infty )$,
    \item $f^k(0) = 0$, for each $k \in \mathbb{N} \cup \{0\}$, and
    \item 
    $\Lambda_f|_{[0, R]}$ satisfies the doubling condition for some $R \in \left(0, f^{-1}(1)\right)$. 
    \end{enumerate}
\end{defn}
\begin{exmp}
    For $p > 0$, $f(x) = e^{-1/x^{p}}$ is mildly infinite type at zero.
\end{exmp}
\section{Technical Lemmas}\label{Technical Lemmas}
We prove the following lemma, which serves as a valuable tool in proving Theorem \ref{finite type lemma}.
\begin{lem}\label{Lambda_f}
    Let $f: [0, \infty) \to [0, \infty)$ be a strictly increasing functon satisfying $f(0) = 0$ and $m \geq 1$. Assume $\Lambda_f|_{[0, R]}$
    satisfies a doubling condition for some $R \in (0, f^{-1}(1))$. Then there exist constants $T >0$ and $C > 0$ such that
    \begin{align}
        [\Lambda_f^{-1}(2t)]^{m} - [\Lambda_f^{-1}(t)]^m \leq C[\Lambda_f^{-1}(t)]^m, 
    \end{align} for each $0 < t < T$.
\end{lem}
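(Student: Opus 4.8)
The plan is to translate the doubling condition on $\Lambda_f$ into an approximate scaling estimate for its inverse, namely $\Lambda_f^{-1}(2t) \le \sigma\,\Lambda_f^{-1}(t)$ for all sufficiently small $t > 0$, after which the assertion is immediate.

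First I would record the elementary properties of $\Lambda_f$. Since $f$ is strictly increasing with $f(0) = 0$ and $R < f^{-1}(1)$, for $x \in (0, R]$ we have $0 < f(x) \le f(R) < 1$, so $\log f(x) < 0$ and $\Lambda_f(x) = -1/\log f(x) > 0$, while $\Lambda_f(0) = 0$; moreover, as $x$ increases the quantities $f(x)$ and $\log f(x)$ increase through negative values, so $-1/\log f(x)$ increases. Hence $\Lambda_f$ is strictly increasing on $[0, R]$ with $\Lambda_f(0) = 0$, so it is injective there and its inverse $\Lambda_f^{-1}$ (taken on the range of $\Lambda_f|_{[0,R]}$, or as a generalized inverse if $f$ is not continuous) is increasing with $\Lambda_f^{-1}(0) = 0$.

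Next, let $\sigma > 1$ be a doubling constant for $\Lambda_f|_{[0, R]}$, so $2\Lambda_f(x) \le \Lambda_f(\sigma x)$ for every $x \in [0, R/\sigma]$; in particular, taking $x = R/\sigma$ gives $2\Lambda_f(R/\sigma) \le \Lambda_f(R)$. I set $T := \Lambda_f(R/\sigma) > 0$ and $C := \sigma^m - 1 > 0$. Fix $0 < t < T$ and put $x := \Lambda_f^{-1}(t)$, so that $0 < x < R/\sigma$; then $2t = 2\Lambda_f(x) \le \Lambda_f(\sigma x)$ with $\sigma x \le R$, and since $0 < 2t < \Lambda_f(R)$ the value $\Lambda_f^{-1}(2t)$ is legitimate and applying the increasing map $\Lambda_f^{-1}$ to both sides yields $\Lambda_f^{-1}(2t) \le \sigma x = \sigma\,\Lambda_f^{-1}(t)$. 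Because $m \ge 1$ and both quantities are nonnegative, raising to the power $m$ preserves the inequality, giving $[\Lambda_f^{-1}(2t)]^m \le \sigma^m [\Lambda_f^{-1}(t)]^m$, and subtracting $[\Lambda_f^{-1}(t)]^m$ from both sides produces the claimed bound with the constants $C$ and $T$ above.

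I do not expect any genuine obstacle here; the lemma is a soft consequence of the doubling hypothesis. The only points needing a little care are ensuring that $\Lambda_f^{-1}$ is well-defined and monotone near $0$ (handled in the first step) and that the arguments $x$, $\sigma x$, $t$, and $2t$ all remain in the ranges where the doubling inequality and the inverse are available — which is exactly why one takes $T = \Lambda_f(R/\sigma)$ rather than $T = \Lambda_f(R)$.
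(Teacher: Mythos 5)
Your proposal is correct and follows essentially the same route as the paper's proof: use the doubling inequality $2\Lambda_f(x)\le\Lambda_f(\sigma x)$, set $T=\Lambda_f(R/\sigma)$, apply the increasing inverse to get $\Lambda_f^{-1}(2t)\le\sigma\,\Lambda_f^{-1}(t)$, raise to the $m$-th power, and take $C=\sigma^m-1$. The only difference is that you spell out the monotonicity and well-definedness of $\Lambda_f^{-1}$ in slightly more detail, which the paper leaves implicit.
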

\begin{proof}
Let $\sigma > 1$ be a doubling constant for $\Lambda_f$. Write $T := 
\Lambda_f(R/\sigma)$. By the doubling condition of $\Lambda_f$, we have
\[2\Lambda_f(x) \leq \Lambda_f(\sigma x),\]
for each $x \in [0, R/\sigma]$. Since $\Lambda_f^{-1}$ is strictly increasing, we get
\[\Lambda_f^{-1}(2 \Lambda_f(x)) \leq \sigma x,\]
for each $x \in [0, R/\sigma].$
Let $t \in [0, T],$ then $\Lambda_f^{-1}(t) \in [0, R/\sigma]$. Therefore,
\[\Lambda_f^{-1}(2t) \leq \sigma \Lambda_f^{-1}(t).\]
Hence,
\[[\Lambda_f^{-1}(2t)]^m \leq \sigma^m [\Lambda_f^{-1}(t)]^m.\]
Now take $C = \sigma^m - 1 > 0$, then we get
\[[\Lambda_f^{-1}(2t)]^{m} - [\Lambda_f^{-1}(t)]^m \leq C[\Lambda_f^{-1}(t)]^m, \]
for each $t \in [0, T]$.
\end{proof}
We now prove the following proposition which plays a crucial role in proving our main results.
\begin{prop}\label{finite type lemma} Let \(f : [0, \infty) \to [0, \infty)\) be either finite type at zero or mildly infinite type at zero. Then, for each $k \geq 1$ and $M > 0$, there exists constant $C(k, M, f) > 0$ such that
\begin{align}\label{Integral estimate}
   \int_{0}^{1}\frac{r^{k-1}}{(f(r) + t)^k} \, \mathrm{d}r \leq C(k, M, f) \frac{[f^{-1}(t)]^k}{t^k},
\end{align}
for each $ 0 < t < M $.
\end{prop}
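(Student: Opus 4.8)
The plan is to split the integral at $r=f^{-1}(t)$ and estimate the remaining part by a dyadic decomposition tuned to $f$. First I would dispose of the trivial range $t\in[t_1,M]$ (any fixed $t_1>0$): there $\int_0^1 r^{k-1}(f(r)+t)^{-k}\,\mathrm{d}r\le t^{-k}\le t_1^{-k}$, while $[f^{-1}(t)]^k/t^k$ is continuous and positive on $[t_1,M]$, hence bounded below; so \eqref{Integral estimate} is clear there. It therefore suffices to treat $t$ small, where how small will depend only on $k$ and $f$.

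For small $t$, set $a_j:=f^{-1}(2^jt)$ and choose $J$ so that $a_J$ still lies in the interval on which the type hypothesis on $f$ is quantitative (for instance $a_J\le R$, shrinking the radius $R$ of Definition \ref{infinite type} if necessary). Decompose $[0,1]=[0,a_0]\cup\bigcup_{j=0}^{J-1}[a_j,a_{j+1}]\cup[a_J,1]$. On $[0,a_0]$ one has $f(r)+t\ge t$, so that piece is at most $[f^{-1}(t)]^k/(k\,t^k)$, which is already the right-hand side of \eqref{Integral estimate}. On $[a_j,a_{j+1}]$ one has $f(r)+t\ge f(a_j)=2^jt$, so that piece is at most $[f^{-1}(2^{j+1}t)]^k/(k\,2^{jk}t^k)$. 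Finally $[a_J,1]$ contributes only a constant depending on $k$ and $f$, which is $\lesssim [f^{-1}(t)]^k/t^k$ since the latter tends to $\infty$ as $t\to0^+$. Summing the middle pieces, the proposition reduces to the inequality
\[
\sum_{j\ge 1}\Big(\frac{f^{-1}(2^jt)}{2^j f^{-1}(t)}\Big)^{\!k}\le C(k,f).
\]

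The heart of the matter is to bound $f^{-1}(2^jt)/f^{-1}(t)\le C_1\gamma^j$ for some fixed $\gamma\in(1,2)$ and all admissible $j$; the displayed sum is then a convergent geometric series. In the finite type case this is immediate from $f^{-1}(u)\sim c^{-1/m}u^{1/m}$ as $u\to0^+$, which makes the ratio comparable to $2^{j/m}$ with $2^{1/m}<2$, the finitely many remaining $j$ being absorbed by a crude bound. In the mildly infinite type case I would use the identity $f^{-1}(u)=\Lambda_f^{-1}(-1/\log u)$: with $\ell:=-\log t$ the ratio equals $\Lambda_f^{-1}\!\big(\tfrac1{\ell-j\log 2}\big)\big/\Lambda_f^{-1}\!\big(\tfrac1\ell\big)$, the two arguments being in the ratio $\rho_j:=\ell/(\ell-j\log 2)$. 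Letting $i_j$ be the least integer with $2^{i_j}\ge\rho_j$ and iterating Lemma \ref{Lambda_f} (with $m=k$) gives $[f^{-1}(2^jt)]^k\le(1+A)^{i_j}[f^{-1}(t)]^k$, where $A$ is the constant of that lemma. For $j\le\ell/(2\log2)$ one has $\rho_j\le2$, hence $i_j\le1$, so the ratio is at most the doubling constant $\sigma$ of $\Lambda_f$; for $\ell/(2\log2)<j\le J$ one has $\rho_j\le\ell/\delta_0$ with $\delta_0:=-\log f(R)>0$ fixed, so $(1+A)^{i_j}$ grows only polynomially in $\ell$ while $\gamma^j\ge\gamma^{\ell/(2\log2)}$ grows exponentially in $\ell$ on that range; hence $f^{-1}(2^jt)/f^{-1}(t)\le C_1\gamma^j$ once $t$ is small enough. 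Feeding this back into the displayed sum finishes the proof.

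I expect the only genuine obstacle to be this final balancing in the mildly infinite type case. The naive estimate of each dyadic piece loses a factor that is polynomial in $\ell=-\log t$ (reflecting that $\Lambda_f^{-1}$ is slowly varying), so one must keep the geometrically small factor $2^{-jk}$ untouched in order to absorb that loss. Lemma \ref{Lambda_f} is exactly what makes the iteration $[f^{-1}(2^jt)]^k\le(1+A)^{i_j}[f^{-1}(t)]^k$ available; once that is in hand, only the elementary comparison of a polynomial in $\ell$ against $\gamma^j$ on the range $\ell/(2\log2)<j\le J$ remains, and the final constant depends on $k$, $M$, and $f$ exactly as stated.
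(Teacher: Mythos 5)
Your proposal is correct, but it follows a genuinely different route from the paper. The paper treats the two cases asymmetrically: for $f$ of finite type it replaces $f(r)$ by $c_1r^m$ and computes $\int_0^1 r^{k-1}(r^m+t)^{-k}\,\mathrm{d}r$ explicitly via the substitution $r=(t\tan^2\theta)^{1/m}$; for $f$ mildly infinite type it splits $[0,1]$ at just two points, $f^{-1}(t)$ and $f^{-1}(t^{1/2})$, chosen so that the middle piece is controlled by a \emph{single} application of Lemma \ref{Lambda_f} (since $f^{-1}(t^{1/2})=\Lambda_f^{-1}\bigl(2/\log(1/t)\bigr)$ is exactly one doubling of $f^{-1}(t)$), while the outer piece is bounded by $t^{1/2-k}$ and absorbed using the infinite-order vanishing $f(x)/x^{2k}\to0$, i.e.\ $t^{1/2}\lesssim[f^{-1}(t)]^k$. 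You instead run one uniform dyadic decomposition at $a_j=f^{-1}(2^jt)$ for both cases, reduce everything to the convergence of $\sum_j\bigl(f^{-1}(2^jt)/(2^jf^{-1}(t))\bigr)^k$, and obtain the needed slow-variation bound $f^{-1}(2^jt)\le C_1\gamma^jf^{-1}(t)$ by \emph{iterating} Lemma \ref{Lambda_f} and balancing a factor polynomial in $\log(1/t)$ against $\gamma^j$ on the upper range of $j$. What your approach buys is uniformity (no trigonometric computation, no separate $\sqrt{t}$ trick, and the finite-type case becomes a one-line geometric series); what it costs is the more delicate iteration: you must check that every intermediate argument $2^i/\ell$ stays below $T=\Lambda_f(R/\sigma)$, which is precisely what your parenthetical ``shrink $R$'' accomplishes (work with radius $R/\sigma$, so that admissible $j$ satisfy $1/(\ell-j\log 2)\le T$, and note $2T\le\Lambda_f(R)$ keeps the top argument in the domain of $\Lambda_f^{-1}$); with that detail written out, your argument is complete and yields the constant with the stated dependence on $k$, $M$, $f$.
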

\begin{proof}
The proof is divided into two cases: $(1)$ $f$ is finite type at zero, and $(2)$ $f$ is mildly infinite type at zero.

\textbf{Case 1.} Let $f$ be finite type at zero.
Then, there exist $m > 1$, $0 < c_1 < 1$ and $c_2 > 0$ such that
\begin{align*}
    c_1 \leq \frac{f(x)}{x^m} \leq c_2,
\end{align*}
for each $x \in (0, \operatorname{max}\{1, f^{-1}(M)\})$.
Hence,
\begin{align*}
        \int_{0}^{1} \frac{r^{k-1}}{(f(r) + t)^k} \mathrm{d}r &\leq \frac{1}{c_{1}^{k}} \int_{0}^{1} \frac{r^{k-1}}{(r^m + t)^k} \mathrm{d}r.
\end{align*}
By using the change of variable $r = (t \operatorname{tan}^2(\theta))^{1/m}$, we get   
\begin{align*}
    \int_{0}^{1} \frac{r^{k-1}}{(r^m + t)^k} \mathrm{d}r &\leq \frac{2}{m}\int_{0}^{\operatorname{tan}^{-1}(1/\sqrt{t})}  \frac{(t \operatorname{tan}^2(\theta))^{(k-1)/m}}{t^k \operatorname{sec}^{k \beta }(\theta)} \frac{t \operatorname{tan}(\theta) \operatorname{sec}^2(\theta)}{(t \operatorname{tan}^2{(\theta)})^{(m-1)/m}} \, \mathrm{d} \theta \\
    &\leq \frac{2}{m t^k} \int_{0}^{\pi/2} \frac{t^{k/m} \operatorname{tan}^{k \beta /m}(\theta) \operatorname{cos}^{2(k-1)}(\theta)}{\operatorname{tan}^{(1 - 1/m)}(\theta) \operatorname{tan}^{1/m}(\theta)} \, \mathrm{d}\theta\\
    &\lesssim \frac{[f^{-1}(t)]^k}{t^k} \int_{0}^{\pi/2} \frac{\operatorname{sin}^{(k \beta  - 1)/m}(\theta) [\operatorname{cos}(\theta)]^{2(k-1)(1- 1/m)}}{\operatorname{cos}^{1/m}(\theta) \operatorname{tan}^{1-1/m}(\theta)} \, \mathrm{d} \theta \\
    &\lesssim \frac{[f^{-1}(t)]^k}{t^k} \int_{0}^{\pi/2} \frac{1}{\operatorname{cos}^{1/m}(\theta) \operatorname{tan}^{1-1/m}(\theta)} \, \mathrm{d} \theta \\
    &\lesssim \frac{[f^{-1}(t)]^k}{t^k},
\end{align*}
for each $0 < t < M$. 

\textbf{Case 2.} Suppose \(f\) is mildly infinite type at zero, let \(t \in \left(0, \operatorname{min}\{f\left(R/\sigma\right), f(1)^2\}\right)\). Write,
\begin{align*}
     \int_{0}^{1}\frac{r^{k - 1}}{(f(r) + t)^k} \, \mathrm{d} r =& \int_{0}^{f^{-1}(t)}\frac{r^{k - 1}}{(f(r) + t)^k} \, \mathrm{d} r + \int_{f^{-1}(t)}^{f^{-1}(t^{1/2})} \frac{r^{k - 1}}{(f(r) + t)^k} \, \mathrm{d} r + \int_{f^{-1}(t^{1/2})}^{1} \frac{r^{k - 1}}{(f(r) + t)^k} \, \mathrm{d} r \\
     =& I + II + III.
\end{align*}
Now,
\begin{align}
        I &= \int_{0}^{f^{-1}(t)}\frac{r^{k - 1}}{(f(r) + t)^k} \, \mathrm{d} r \leq  \int_{0}^{f^{-1}(t)} \frac{r^{k - 1}}{t^k} \, \mathrm{d}r = \frac{1}{k} \frac{[f^{-1}(t)]^k}{t^k},
\end{align}

\begin{align}
        II &= \int_{f^{-1}(t)}^{f^{-1}(t^{1/2})} \frac{r^{k - 1}}{(f(r) + t)^k} \, \mathrm{d} r
        \leq \frac{1}{t^k} \int_{f^{-1}(t)}^{f^{-1}(t^{1/2})} r^{k - 1} \, \mathrm{d} r \nonumber \\
        &= \frac{1}{kt^k}\left([f^{-1}(t^{1/2})]^k - [f^{-1}(t)]^k\right) \nonumber \\
        &= \frac{1}{kt^k}\left(\left[\Lambda_f^{-1}\left(\frac{2}{\operatorname{log}(1/t)}\right)\right]^k - \left[\Lambda_f^{-1}\left(\frac{1}{\operatorname{log}(1/t)}\right)\right]^k\right) \quad \text{ (Here $f^{-1}(x) = \Lambda_f^{-1}(-1/\operatorname{log}(x))$)} \nonumber \\
        &\leq \frac{C}{kt^k} \left[\Lambda_f^{-1}\left(\frac{1}{\operatorname{log}(1/t)}\right)\right]^k \quad \quad \quad \quad \quad \quad \quad \quad \quad \quad \quad \quad \, \, \,\text{(By Lemma \ref{Lambda_f})} \nonumber \\
        &= \frac{C}{k} \frac{[f^{-1}(t)]^k}{t^k}, \text{ and }
\end{align}
\begin{align*}
    III = \int_{f^{-1}(t^{1/2})}^{1} \frac{r^{k - 1}}{(f(r) + t)^k} \, \mathrm{d} r  \leq \int_{f^{-1}(t^{1/2})}^{1} \frac{r^{k-1}}{k f(r)t^{k - 1}} \, \mathrm{d} r \leq \int_{f^{-1}(t^{1/2})}^{1} \frac{r^{k - 1}}{kt^{1/2}t^{k - 1}} \, \mathrm{d}r \leq \frac{t^{1/2}}{kt^k}.
\end{align*} 
Since
\begin{align*}
    \lim_{x \to 0^+} \frac{f(x)}{x^{2k}} = 0,
\end{align*}
there exists a constant $C(k, f) > 0$ such that
\begin{align}
III \leq C(k,f) \frac{[f^{-1}(t)]^k}{t^k},    
\end{align}
for each $0 < t < \operatorname{min}\{f\left(R/\sigma\right), f(1)^2\}$. Hence from the estimates $I$, $II$, and $III$, we get
\begin{align*}
    \int_{0}^{1} \frac{r^{k-1}}{(f(r) + t)^k} \, \mathrm{d}r \lesssim \frac{[f^{-1}(t)]^k}{t^k},
\end{align*}
for each $ 0 < t < \operatorname{min}\{f\left(R/\sigma\right), f(1)^2\}$.
Therefore there exists constant $C(k, M, f) > 0$ such that
\begin{align}
   \int_{0}^{1}\frac{r^{k-1}}{(f(r) + t)^k} \, \mathrm{d}r \leq C(k, M, f) \frac{[f^{-1}(t)]^k}{t^k},
\end{align}
for each $ 0 < t < M $.
\end{proof}
\section{Proof of the Main Theorems} \label{Main Theorem}
In this section, we prove optimal estimates of the Bergman and Szeg\H{o} kernels on the diagonal, and the Bergman metric near the boundary of smooth bounded generalized decoupled domains in $\mathbb{C}^n$. We also prove lower bounds of the Bergman and Szeg\H{o} kernels on the diagonal near the boundary of more general domains in $\mathbb{C}^n$.
\subsection{Optimal estimates of the Bergman kernel and metric}
We prove optimal estimates of the Bergman kernel on the diagonal below.
\begin{manualtheorem}{1.2}
    Let $D \subset \mathbb{C}^n$ be a bounded smooth pseudoconvex
    generalized decoupled domain near $0 \in bD$.
\begin{enumerate}
    \item Then, there exist constants $C > 0$ and $t_0 > 0$ such that
\begin{align}
    \kappa_{D}(z) \geq C (\operatorname{Re}z_n)^{-2} \Pi_{j = 1}^{k} [f_j^{-1}(\operatorname{Re} z_n)]^{-2n_{j}},
\end{align}
for each $z \in \{w \in \mathbb{C}^n : \operatorname{Re}w_n > F(w'), \text{ and }|w_n| < t_0\}$.
\item Then, for each $\alpha = (\alpha_1, \dots, \alpha_k)$ with ${\alpha_j > 1}$
$(j \in \{1, \dots, k\})$ and $\beta > 1$, there exist constants $C(\alpha, \beta) > 0$ and $t_0 > 0$ such that
\begin{align}
    \kappa_{D}(z) \leq C(\alpha, \beta) (\operatorname{Re}z_n)^{-2} \Pi_{j = 1}^{k} [f_j^{-1}(\operatorname{Re} z_n)]^{-2n_{j}},
\end{align}
for each $z \in \Gamma^{\alpha, \beta}_k \cap \{w \in \mathbb{C}^n : |w_n| < t_0\}$. 
Here
\begin{align}
    \Gamma^{\alpha, \beta}_{k} := \left\{w \in \mathbb{C}^n: \operatorname{Re}w_n > k \beta \left[f_1\left(\alpha_1\left|w^1\right|\right) + \dots + f_k\left(\alpha_k\left|w^k\right|\right)\right]\right\}.
\end{align}
\end{enumerate}
\end{manualtheorem}
\begin{proof}$(1)$
     Since $D \subset \mathbb{C}^n$ is a generalized decoupled domain, there exists a neighbourhood $U$ of the origin such that 
    \begin{align}
        D \cap U = \{z \in U: \operatorname{Re}z_n > F(z')\}.
    \end{align}
    By localization of the Bergman kernel (see Theorem \ref{2.2}), it is enough to compute $\kappa_{D \cap U}$.
    Recall that
    \[\kappa_{D \cap U}(z) = \operatorname{sup}\left\{|f(z)|^2 : f \in A^2(D \cap U), \norm{f}_{A^2(D \cap U)} \leq 1\right\}.\]
    Fix $z \in \{w \in V : \operatorname{Re}w_n > F(w')\}$, where $V \subset \subset U$.
    Let $\phi(w', w_n) = \frac{(\operatorname{Re}z_n)^n}{(w_n - i \operatorname{Im}z_n + \operatorname{Re} z_n)^n}$. Then, $\phi \in A^2(D \cap U)$ and
\begin{align}\label{lower bound 1}
    \kappa_{D \cap U}(z) \geq \frac{|\phi(z)|^2}{\norm{\phi}^2_{A^2({D \cap U)}}} = \frac{1}{2^{2n}} \cdot \frac{1}{\norm{\phi}^2_{A^2(D \cap U)}}.
\end{align}
We prove the lower bound of the Bergman kernel, by finding an upper bound of $\norm{\phi}^{2}_{A^2(D \cap U)}$.

Without loss of generality, we can assume that $U$ is contained in the unit ball centred at zero. Let $w_n = u + iv$. Consider
\begin{align*}
        \int_{D \cap U} |\phi(w', w_n)|^2 \mathrm{d} V(w)
        &\lesssim \int_{|w'| < 1} \int_{F(w')}^{\infty}\int_{-1}^{1} \frac{(\operatorname{Re}z_n)^{2n}}{|u + i (v - \operatorname{Im}z_n) + \operatorname{Re}z_n|^{2n}} \, \mathrm{d}v \,\mathrm{d}u \, \mathrm{d}V(w')\\
        &= \int_{|w'| < 1} \int_{F(w')}^{\infty}\int_{-1}^{1} \frac{(\operatorname{Re}z_n)^{2n}}{(u + \operatorname{Re}z_n)^{2n}\left(\left(\frac{v - \operatorname{Im}(z_n)}{u + \operatorname{Re}z_n}\right)^2 + 1 \right)^n} \, \mathrm{d}v \, \mathrm{d}u \,\mathrm{d}V(w').
\end{align*}
For simplicity of notation, denote $\operatorname{Re}z_n$ by $t$. By using the change of variable $ v = (u + t)s + \operatorname{Im}z_n$, we get
\begin{align}
        \int_{|w'| < 1} \int_{F(w')}^{\infty}&\int_{-1}^{1} \frac{t^{2n}}{(u + t)^{2n}\left(\left(\frac{v - \operatorname{Im}z_n}{u + t}\right)^2 + 1 \right)^n} \, \mathrm{d}v \, \mathrm{d}u \, \mathrm{d}V(w')\nonumber\\  
        &\lesssim \int_{|w'| < 1} \int_{F(w')}^{\infty} \frac{t^{2n}}{(u + t)^{2n - 1}} \, \mathrm{d}u \, \mathrm{d}V(w') \nonumber\\
        &\lesssim \int_{|w'| < 1} \frac{t^{2n}}{(F(w') + t)^{2n - 2}} \, \mathrm{d}V(w')\nonumber\\
        &\leq  \int_{|w^1| < 1} \dots \int_{|w^k| < 1} \frac{t^{2n}}{[f_1(|w^1|) + \dots + f_k(|w^k|) + t]^{ 2(n_1 + n_2 + \dots + n_k)}} \, \mathrm{d}w^{k} \dots \mathrm{d}w^1       \nonumber\\
        &\leq \int_{|w^1| < 1} \dots \int_{|w^k| < 1} \frac{t^{2n}}{(f_1(|w^1|) + t)^{2n_1}  \dots (f_k(|w^k|) + t)^{2n_k}} \, \mathrm{d}w^{k} \dots \mathrm{d}w^1\nonumber\\ 
        &\lesssim t^{2n} \Pi_{j = 1}^{k}\int_{|w^j| < 1} \frac{1}{(f_j(|w^j|) + t)^{2n_j}} \, \mathrm{d}w^j\nonumber\\ &\lesssim t^{2n} \Pi_{j = 1}^{k}\int_{0}^{1} \frac{r^{2n_j - 1}}{(f_j(r) + t)^{2n_j}} \, \mathrm{d}r.
\end{align}
Using Proposition \ref{finite type lemma}, we get
\begin{align}
       t^{2n} \Pi_{j = 1}^{k}\int_{0}^{1} \frac{r^{2n_j - 1}}{(f_j(r) + t)^{2n_j}} \, \mathrm{d}r \lesssim t^{2n} \Pi_{j = 1}^{k} \frac{[f_j^{-1}(t)]^{2n_j}}{t^{2n_j}} \nonumber
       = t^2 \Pi_{j = 1}^{k} [f_j^{-1}(t)]^{2n_j}.
\end{align}

Hence,
\begin{align}\label{estimates of phi}
    \int_{D \cap U} |\phi(w', w_n)|^2 \mathrm{d} V(w) \lesssim (\operatorname{Re}z_n)^2 \Pi_{j = 1}^{k} [f_j^{-1}(\operatorname{Re}z_n)]^{2n_j},
\end{align}
for each $z \in \{w \in V : \operatorname{Re}w_n > F(w')\}$. 
Therefore,
 there exist constants $C > 0$ and sufficiently small $t_0 > 0$ such that
 \begin{align}\label{41}
    \kappa_{D}(z) \geq C (\operatorname{Re}z_n)^{-2} \Pi_{j = 1}^{k} [f_j^{-1}(\operatorname{Re} z_n)]^{-2n_{j}},
\end{align}
for each $z \in \{w \in \mathbb{C}^n : \operatorname{Re}w_n > F(w'), \text{ and } |w_n| < t_0\}$.

$(2)$
There exists $t_0 > 0$ such that for each $z = (z^1, \dots, z^k, z_n) \in \Gamma^{\alpha, \beta}_k \cap \{w \in \mathbb{C}^n : |w_n| < t_0\}$, we show that 
    \begin{align*}
    A_z := B_{n_1}\left(z^1, \frac{\alpha_1 - 1}{\alpha_1}f_1^{-1}\left(\frac{\operatorname{Re}z_n}{k\beta}\right)\right) \times \dots \times B_{n_k}\left(z^k, \frac{\alpha_k - 1}{\alpha_k}f_k^{-1}\left(\frac{\operatorname{Re}z_n}{k\beta}\right)\right) \times\\ B_1\left(z_n, \frac{\beta - 1}{\beta}\operatorname{Re}z_n\right)
    \end{align*}
is contained in $U$.
Let $z \in \Gamma^{\alpha, \beta}_k \cap \{w \in \mathbb{C}^n : |w_n| < t_0\}$. We want to prove $A_z \subset D \cap U$.
For $w = (w^1, \dots, w^k, w_n) \in A_z$, we have
\begin{align}\label{etaestimate}
    |w^j| \leq |z^j| + \frac{(\alpha_j - 1)}{\alpha_j}f_j^{-1}\left(\frac{\operatorname{Re}z_n}{k\beta}\right),
\end{align}
for each $j \in \{1, 2, \dots, k\}$.
Since $ z = (z^1, \dots, z^k, z_n) \in \Gamma^{\alpha, \beta}_{k}$, we have
\begin{align}
        \operatorname{Re}z_n &>  k \beta (f_1(\alpha_1|z^1|) + \dots + f_k(\alpha_k|z^k|)) \geq k\beta f_j(\alpha_{j}|z^j|).
\end{align}
Therefore,
\begin{align}\label{xiestimate}
    |z^j| < \frac{f_j^{-1}(\operatorname{Re}z_n/k\beta)}{\alpha_{j}},
\end{align}
for each $j$. Using \eqref{etaestimate} and \eqref{xiestimate}, we get
\begin{align}
    \left|w^j\right| < f_j^{-1}\left(\frac{\operatorname{Re}z_n}{k\beta}\right) \text{ and hence } f_j\left(\left|w^j\right|\right)< \frac{\operatorname{Re}z_n}{k\beta},
\end{align}
for each $j = 1, 2, \dots, k$. Therefore,
\begin{align}\label{f_jestimate}
    f_1\left(\left|w^1\right|\right)+\dots+f_k\left(\left|w^k\right|\right) < \frac{\operatorname{Re}z_n}{\beta}.
\end{align}
Since $w_n \in B_1\left(z_n, \frac{\beta - 1}{\beta}\operatorname{Re}z_n \right)$, $\operatorname{Re}w_n > \operatorname{Re}z_n/\beta$. Using \eqref{f_jestimate}, we get
\begin{align}
\operatorname{Re}w_n > f_1\left(\left|w^1\right|\right) +  \dots + f_k\left(\left|w^k\right|\right). 
\end{align}
Therefore, $A_z \subset D \cap U$ for each $ z \in \Gamma^{\alpha, \beta}_k \cap \{w \in \mathbb{C}^n : |w_n| < t_0\}$. Using the monotonicity of the Bergman kernel on the diagonal, we get
\begin{align}\label{kestimate}
    \kappa_{D \cap U}(z) \leq  \kappa_{A_z}(z),
\end{align}
for each $z \in \Gamma^{\alpha, \beta}_k \cap \{w \in \mathbb{C}^n : |w_n| < t_0\}$.
Since 
\begin{align}\label{20}
        \kappa_{A_z}(z) &= \kappa_{A_z}\left(z^1, \dots, z^k, z_n\right) = \frac{1}{\operatorname{Vol}(A_z)} \leq C(\alpha, \beta) \frac{1}{(\operatorname{Re}z_n)^2 \Pi_{j=1}^{k}(f_j^{-1}(\operatorname{Re}z_n/k \beta ))^{2n_j}},
\end{align}
for each $z \in \Gamma^{\alpha, \beta}_k \cap \{w \in \mathbb{C}^n : |w_n| < t_0\}.$

If $f_j$ is finite type at zero, then there exists a constant $C > 0$ such that
\begin{align}\label{21}
    f_j^{-1}(t) \leq Cf_j^{-1}(t/ k \beta ),
\end{align}
for each $t \in (0, 1)$.

If $f_j$ is mildly infinite type at zero, then
\begin{align}\label{22}
        f_j^{-1}\left(\frac{t}{k \beta }\right) &\geq f_j^{-1}\left(t^2\right) \quad \quad \quad \quad \text{ ($f_j$ is strictly increasing and $t \in (0, 1/k \beta )$)}\nonumber\\
        &= \Lambda_f^{-1}\left(\frac{1}{\operatorname{log}(1/t^2)}\right) \, \text{ (Here $f^{-1}(x) = \Lambda_f^{-1}(-1/\operatorname{log}(x))$)}\nonumber\\
        &= \Lambda_f^{-1}\left(\frac{1}{2 \operatorname{log}(1/t)}\right)\nonumber\\
        &\geq \frac{1}{\sigma}\Lambda_f^{-1}\left(\frac{1}{ \operatorname{log}(1/t)}\right) \text{ (By Lemma \ref{Lambda_f})}\nonumber\\
        &= \frac{1}{\sigma}f_j^{-1}(t),
\end{align}
for each $t \in (0,t_0)$, where $t_0 > 0$ is
a sufficiently small number.

Using \eqref{kestimate}, \eqref{20}, \eqref{21}, and \eqref{22}, we get
\begin{align}\label{Upper bound of Bergman kernel}
\kappa_{D}(z) \leq \kappa_{D \cap U}(z) \leq \kappa_{A_z}(z) \leq  C(\alpha, \beta) (\operatorname{Re}z_n)^{-2} \Pi_{j = 1}^{k} [f_j^{-1}(\operatorname{Re} z_n)]^{-2n_{j}},
\end{align}
for each $z \in \Gamma^{\alpha, \beta}_k \cap \{w \in \mathbb{C}^n : |w_n| < t_0\}.$
\end{proof}
We now prove optimal estimates of the Bergman metric.
\begin{manualtheorem}{1.3}
Let $D \subset \mathbb{C}^n$ be a bounded smooth pseudoconvex
generalized decoupled domain near $0 \in bD$.
Then, for each $\alpha = (\alpha_1, \dots, \alpha_k)$ with $\alpha_j > 1$ $(j \in \{1, 2, \dots, k\})$ and $\beta > 1$, there exist constants $C(\alpha, \beta) > 0$ and $t_0 > 0$ such that 
\begin{align*}
    \frac{1}{C(\alpha, \beta)}\left(\sum_{j = 1}^{k}\frac{|\xi^j|^2}{(f_j^{-1}(\operatorname{Re}z_n)^2} + \frac{|\xi_n|^2}{(\operatorname{Re}z_n)^2}\right)\leq B^2_D(z; \xi) \leq C(\alpha, \beta) \left(\sum_{j = 1}^{k}\frac{|\xi^j|^2}{(f_j^{-1}(\operatorname{Re}z_n)^2} + \frac{|\xi_n|^2}{(\operatorname{Re}z_n)^2}\right)
\end{align*}
for each $z \in \Gamma^{\alpha, \beta}_{k} \cap \{w \in \mathbb{C}^n: |w_n| < t_0\}$, and $\xi = (\xi^1, \dots, \xi^k, \xi_n) \in \mathbb{C}^{n_1} \times \dots \times \mathbb{C}^{n_k} \times \mathbb{C}$. 
\end{manualtheorem}
\begin{proof}
 Since $D \subset \mathbb{C}^n$ is a generalized decoupled domain, there exists a neighbourhood $U$ of the origin such that 
    \begin{align}
        D \cap U = \{z \in U: \operatorname{Re}z_n > F(z')\}.
    \end{align}
    Without loss of generality, we can assume that $U$ is contained in the unit ball centred at zero. 
    We divide the proof into two steps. In the first step, we prove the lower bound of the Bergman metric and in the second step, we prove the upper bound of the Bergman metric.

\textbf{Step 1.} By using the localization Theorem \ref{2.2}, for $V \subset \subset U$, there exists $C > 0$ such that
\[\frac{1}{C}B_{D \cap U}^2(z;\xi) \leq B_{D}^2(z;\xi) \leq CB_{D \cap U}^2(z; \xi),\]
for any $z \in D \cap V$ and any $\xi \in \mathbb{C}^n$.
By Proposition \ref{Fuchs}, we have 
\begin{align}
    B_{D \cap U}^2(z;\xi) &= \frac{1}{\kappa_{D \cap U}(z) \cdot I_1^{D \cap U}(z;\xi)}
\end{align}
From Theorem \ref{Bergman kernel}, there exist constants $C(\alpha, \beta) > 0$ and $t_0 > 0$ such that
\begin{align}
    B_{D \cap U}^2(z; \xi)
    \geq \frac{ (\operatorname{Re}z_n)^{2} \Pi_{j = 1}^{k} [f_j^{-1}(\operatorname{Re} z_n)]^{2n_{j}}}{C(\alpha, \beta) \cdot I_1^{D \cap U}(z;\xi)}  
\end{align}
for each $z \in \Gamma^{\alpha, \beta}_k \cap \{w\in \mathbb{C}^n: |w_n| < t_0\}$, and $\xi \in \mathbb{C}^{n}\setminus\{0\}$.
We estimate the upper bound for $I_1^{D \cap U}$. Fix $z \in \Gamma^{\alpha, \beta}_k \cap \{w \in \mathbb{C}^n:|w_n| < t_0\}$. First suppose $\xi_n \neq 0$. Let
\[\psi(w', w_n) = \frac{(2\operatorname{Re}z_n)^{n + 1}(w_n - i \operatorname{Im}z_n - \operatorname{Re}z_n)}{\xi_n (w_n - i\operatorname{Im}z_n + \operatorname{Re}z_n)^{n + 1}}.\]
Then, $\psi \in A^2(D \cap U)$ and
\begin{align}
I_1^{D \cap U}(z, \xi) \leq \norm{\psi}^2_{L^2(D \cap U)} \leq 4 (\norm{\psi_1}^2_{L^2(D \cap U)} + \norm{\psi_2}^2_{L^2(D \cap U)}),
\end{align}
where 
\begin{align*}
    \psi_1(w', w_n) = \frac{(2\operatorname{Re}z_n)^{n + 1}}{\xi_n (w_n - i\operatorname{Im}z_n + \operatorname{Re}z_n)^n } \, \text{  and  }
    \,\psi_2(w', w_n) = \frac{(2\operatorname{Re}z_n)^{n + 2}}{\xi_n (w_n - i\operatorname{Im}z_n + \operatorname{Re}z_n)^{n + 1}}. 
\end{align*}
Here $\psi_1(w', w_n) = \frac{2^{n + 1}\operatorname{Re}z_n}{\xi_n}\phi(w', w_n)$, where $\phi(w', w_n) = \frac{(\operatorname{Re}z_n)^n}{(w_n - i\operatorname{Im}z_n + \operatorname{Re}z_n)^n}$. Hence from \eqref{estimates of phi}, we get 
\begin{align}
    \norm{\psi_1}_{L^2(D \cap U)}^2 &\lesssim \frac{(\operatorname{Re}z_n)^4 \Pi_{j = 1}^{k} [f_j^{-1}(t)]^{2n_j}}{|\xi_n|^2}, \text{ and}\\
    \norm{\psi_2}^2_{L^2(D \cap U)} &\leq 2 \norm{\psi_1}_{L^2(D \cap U)}^2 \lesssim \frac{(\operatorname{Re}z_n)^4 \Pi_{j = 1}^{k} [f_j^{-1}(t)]^{2n_j}}{|\xi_n|^2}.
\end{align}
Therefore,
\begin{align*}
    I_1^{D \cap U}(z, \xi) \lesssim \frac{(\operatorname{Re}z_n)^4 \Pi_{j = 1}^{k} [f_j^{-1}(t)]^{2n_j}}{|\xi_n|^2},
\end{align*}
for each $z \in \Gamma^{\alpha, \beta}_k \cap \{w \in \mathbb{C}^n: |w_n| < t_0\}$ and $\xi \in \mathbb{C}^n$ with $\xi_n \neq 0.$ Hence,
\begin{align}\label{59}
    B^2_{D \cap U}(z ; \xi) \geq \frac{1}{C(\alpha, \beta)} \frac{|\xi_n|^2}{(\operatorname{Re}z_n)^2},
\end{align}
for each $z \in \Gamma^{\alpha, \beta}_k \cap \{w \in \mathbb{C}^n: |w_n| < t_0\}$, and $\xi \in \mathbb{C}^n$. 

Now suppose $\xi^j = (\xi^j_1, \dots, \xi^j_{n_j}) \in \mathbb{C}^{n_j} \setminus \{ 0 \}$ which implies at least one of the $\xi^j_{\ell}$'s are non-zero. 
Without loss of generality, we may assume $\xi^j_{n_j} \neq 0$, and fix $z \in \Gamma^{\alpha, \beta}_k \cap \{w \in \mathbb{C}^n: |w_n| < t_0\}$. Let
\[{\psi}^{\star}(w', w_n) =  \frac{(2\operatorname{Re}z_n)^{n + 1}(w_{n_1 + n_2 + \dots + n_j} - z_{n_1 + n_2 + \dots + n_j})}{\xi^j_{n_j}(w_n - i\operatorname{Im}z_n + \operatorname{Re}z_n)^{n + 1}}.\]
Note that ${\psi}^{\star} \in A^2(D \cap U)$ and
\begin{align}
I_1^{D \cap U}(z, \xi) \leq \norm{\psi^{\star}}^2_{L^2(D \cap U)} \leq 2 (\norm{\psi^{\star}_1}^2_{L^2(D \cap U)} + \norm{\psi^{\star}_2}^2_{L^2(D \cap U)}),
\end{align}
where 
\begin{align}
    \psi^{\star}_1(w', w_n) = \frac{(2\operatorname{Re}z_n)^{n + 1}w_{n_1 + n_2 + \dots + n_j}}{\xi^j_{n_j} (w_n - i\operatorname{Im}z_n + \operatorname{Re}z_n)^{n + 1}}, \text{ and }
    \psi^{\star}_2(w', w_n) = \frac{(2\operatorname{Re}z_n)^{n + 1} z_{n_1 + n_2 + \dots + n_j}}{\xi^j_{n_j} (w_n - i\operatorname{Im}z_n + \operatorname{Re}z_n)^{n + 1}}. 
\end{align}
Since $\psi^{\star}_2(w', w_n) = \frac{2^{n + 1} z_{n_1 + n_2 + \dots + n_j}\operatorname{Re}z_n}{\xi^j_{n_j}(w_n - i\operatorname{Im}z_n + \operatorname{Re}z_n)}\phi(w', w_n)$, where $\phi(w', w_n) = \frac{(\operatorname{Re}z_n)^n}{(w_n - i \operatorname{Im}z_n + \operatorname{Re} z_n)^n}$. Hence from \eqref{estimates of phi}, we get 
\begin{align}
\norm{\psi^{\star}_2}_{L^2(D \cap U)}^2 &\lesssim \frac{|z_{n_1 + \dots + n_j}|^2(\operatorname{Re}z_n)^2 \Pi_{j = 1}^{k} [f_j^{-1}(\operatorname{Re}z_n)]^{2n_j}}{|\xi^j_{n_j}|^2} \nonumber\\
&\lesssim \frac{[f_j^{-1}(\operatorname{Re}z_n)]^2(\operatorname{Re}z_n)^2 \Pi_{j = 1}^{k} [f_j^{-1}(\operatorname{Re}z_n)]^{2n_j}}{|\xi^j_{n_j}|^2},
\end{align}
for each $z \in \Gamma^{\alpha, \beta}_k \cap \{w \in \mathbb{C}^n: |w_n| < t_0\}$. 
Let $w_n = u + iv$. Consider
\begin{align*}
        \norm{\psi_1^{\star}}_{L^2(D \cap U)}^2  & \lesssim \int_{|w'| < 1} \int_{F(w')}^{\infty}\int_{-1}^{1} \frac{(\operatorname{Re}z_n)^{2(n + 1)}|w_{n_1 + n_2 + \dots + n_j}|^2}{|\xi^j_{n_j}|^2|u + i (v - \operatorname{Im}z_n) + \operatorname{Re}z_n|^{2(n + 1)}} \, \mathrm{d}v \,\mathrm{d}u \, \mathrm{d}V(w')\\
        &\lesssim \frac{1}{|\xi^j_{n_j}|^2}\int_{|w'| < 1} \int_{F(w')}^{\infty}\int_{-1}^{1} \frac{(\operatorname{Re}z_n)^{2(n + 1)}|w_{n_1 + n_2 + \dots + n_j}|^2}{(u + \operatorname{Re}z_n)^{2(n + 1)}\left(\left(\frac{v - \operatorname{Im}(z_n)}{u + \operatorname{Re}z_n}\right)^2 + 1 \right)^{n + 1}} \, \mathrm{d}v \, \mathrm{d}u \,\mathrm{d}V(w').
\end{align*}
For simplicity of notation, denote $\operatorname{Re}z_n$ by $t$.
Using the change of variable $ v = (u + t)s + \operatorname{Im}z_n$, we get
\begin{align}
        \int_{|w'| < 1} \int_{F(w')}^{\infty}&\int_{-1}^{1} \frac{t^{2(n + 1)}|w_{n_1 + n_2 + \dots + n_j}|^2}{(u + t)^{2(n + 1)}\left(\left(\frac{v - \operatorname{Im}z_n}{u + t}\right)^2 + 1 \right)^{(n + 1)}} \, \mathrm{d}v \, \mathrm{d}u \, \mathrm{d}V(w') \nonumber\\ 
        &\lesssim \int_{|w'| < 1} \int_{F(w')}^{\infty} \frac{t^{2(n + 1)}|w_{n_1 + n_2 + \dots + n_j}|^2}{(u + t)^{2n + 1}} \, \mathrm{d}u \, \mathrm{d}V(w')\nonumber\\
        &\lesssim \int_{|w'| < 1} \frac{t^{2(n + 1)}|w_{n_1 + n_2 + \dots + n_j}|^2}{(F(w') + t)^{2n}} \, \mathrm{d}V(w')\nonumber\\
        &\leq  \int_{|\eta^1| < 1} \dots \int_{|\eta^k| < 1} \frac{t^{2(n + 1)}|\eta^j|^2}{(f_1(|\eta^1|) + \dots + f_k(|\eta^k|) + t)^{ 2(n_1 + n_2 + \dots + n_k) + 2}} \, \mathrm{d}\eta^k \dots \mathrm{d}\eta^1      \nonumber \\
        &\leq \int_{|\eta^1| < 1} \dots \int_{|\eta^k| < 1} \frac{t^{2(n + 1)}|\eta^j|^2}{(f_1(|\eta^1|) + t)^{2n_1} \dots  (f_k(|\eta^k|) + t)^{2n_k}(f_j(|\eta^j|) + t)^{2}} \, \mathrm{d}\eta^{k} \dots \mathrm{d}\eta^1 \nonumber\\ 
        &\lesssim t^{2(n + 1)} \left( \int_{|\eta^j| < 1} \frac{|\eta^j|^2}{(f_j(|\eta^j|) + t)^{2(n_j + 1)}} \, \mathrm{d}\eta^j \right) \left( \Pi_{\ell = 1, \ell \neq j}^{k}\int_{|\eta^{\ell}| < 1} \frac{1}{(f_{\ell}(|\eta^{\ell}|) + t)^{2n_{\ell}}} \, \mathrm{d}\eta^{\ell} \right)\nonumber\\
        &\lesssim t^{2(n + 1)} \left(\int_{0}^{1} \frac{r^{2n_j + 1}}{(f_j(r) + t)^{2(n_j + 1)}} \, \mathrm{d}r \right)\left(\Pi_{\ell = 1, \ell \neq j}^{k}\int_{0}^{1} \frac{r^{2n_{\ell} - 1}}{(f_{\ell}(r) + t)^{2n_{\ell}}} \, \mathrm{d}r \right)\nonumber\\
        &\lesssim t^{2(n + 1)}\left(\frac{f_j^{-1}(t)}{t}\right)^{2(n_{j} + 1)} \left(\Pi_{\ell = 1, \ell \neq j}^{k} \frac{[f_{\ell}^{-1}(t)]^{2n_{\ell}}}{t^{2n_{\ell}}}\right)\nonumber \text{ (Using Proposition \ref{finite type lemma})}\\
        &= t^2 [f_j^{-1}(t)]^2\Pi_{{\ell} = 1}^{k} [f_{\ell}^{-1}(t)]^{2n_{\ell}}.
\end{align}
Hence,
\begin{align} 
   \norm{\psi_1^{\star}}_{L^2(D \cap U)}^2  \lesssim \frac{t^2 [f_j^{-1}(t)]^2\Pi_{l = 1}^{k} [f_{l}^{-1}(t)]^{2n_{l}}}{|\xi^j_{n_J}|^2}.
\end{align}
Therefore,
\begin{align*}
I_1^{D \cap U}(z, \xi) \lesssim \frac{(\operatorname{Re}z_n)^2 [f_j^{-1}(\operatorname{Re}z_n)]^2\Pi_{j = 1}^{k} [f_j^{-1}(\operatorname{Re}z_n)]^{2n_j}}{|\xi^j_{n_j}|^2},
\end{align*}
for each $z \in \Gamma^{\alpha, \beta}_k \cap \{w \in \mathbb{C}^n: |w_n| < t_0\}$.
Hence,
\begin{align}
    B^2_{D \cap U}(z ; \xi) \geq \frac{1}{C(\alpha, \beta)} \frac{|\xi^{j}_{n_j}|^2}{[f_j^{-1}(\operatorname{Re}z_n)]^2},
\end{align}
for each $z \in \Gamma^{\alpha, \beta}_k \cap \{w \in \mathbb{C}^n: |w_n| < t_0\}$, and $\xi \in \mathbb{C}^n$. Therefore,
\begin{align}\label{66}
    B^2_{D \cap U}(z ; \xi) \geq \frac{1}{n_jC(\alpha, \beta)} \frac{|\xi^{j}|^2}{[f_j^{-1}(\operatorname{Re}z_n)]^2}.
\end{align}
for each $z \in \Gamma^{\alpha, \beta}_k \cap \{(w \in \mathbb{C}^n:|w_n| < t_0\}$, and $\xi \in \mathbb{C}^n$.
After adding the estimates \eqref{59} and \eqref{66}, we get the desired lower bound, i.e.,
\begin{align}
  B^2_D(z; \xi) \geq \frac{1}{C(\alpha, \beta)}\left(\sum_{j = 1}^{k}\frac{|\xi^j|^2}{(f_j^{-1}(\operatorname{Re}z_n)^2} + \frac{|\xi_n|^2}{(\operatorname{Re}z_n)^2}\right),  
\end{align}
for each $z \in \Gamma^{\alpha, \beta}_{k} \cap \{w \in \mathbb{C}^n: |w_n| < t_0\}$, and $\xi = (\xi^1, \dots, \xi^k, \xi_n) \in \mathbb{C}^{n_1} \times \dots \times \mathbb{C}^{n_k} \times \mathbb{C}$.

\textbf{Step 2.} By Proposition \ref{Fuchs}, we have
\begin{align}
    B_{D}^2(z;\xi) &= \frac{1}{\kappa_{D}(z) \cdot I_1^{D}(z;\xi)}\nonumber\\
    &\leq \frac{1}{\kappa_{D}(z) \cdot I_1^{A_{z}}(z; \xi)} \quad \text{ (since $A_z \subset D$)}\nonumber\\
    &= \frac{\kappa_{A_z}(z)}{\kappa_{D}(z)} B_{A_z}^2(z; \xi)\nonumber\\
    &\leq C(\alpha, \beta) \left(\sum_{j = 1}^{k}\frac{|\xi^j|^2}{(f_j^{-1}(\operatorname{Re}z_n)^2} + \frac{|\xi_n|^2}{(\operatorname{Re}z_n)^2}\right) \text{ (from \eqref{41} and \eqref{Upper bound of Bergman kernel})}
\end{align}
for each $z \in \Gamma^{\alpha, \beta}_k \cap \{w \in \mathbb{C}^n : |w_n| < t_0\}$, and $\xi = (\xi^1, \dots, \xi^k, \xi_n) \in \mathbb{C}^{n_1} \times \dots \times \mathbb{C}^{n_k} \times \mathbb{C}$.
\end{proof}
\subsection{Optimal estimates of the Szeg\H{o} kernel}
Let us first establish a localization result for the Szeg\H{o} kernel (see Theorem \ref{locSzegö}). We will then use this localization result to prove the lower and upper bounds of the Szeg\H{o} kernel. This localization involves an auxiliary Szeg\H{o} kernel which we now define.
\begin{defn}
    Let $D \subset \mathbb{C}^n$ be a bounded smooth domain, and let
    \begin{equation}
    A^{\infty}(D) := \mathcal{O}(D) \cap C^{\infty}(\overline{D}).
\end{equation}
Define the Hardy space $H^2(bD)$ to be the closure, in $L^2(bD)$, of the restrictions of elements of $A^{\infty}(D)$ to $bD$. The Poisson integral formula implies that each
$f \in H^2(bD)$ extends to a holomorphic function $Pf$ in $D$. Furthermore, for each $z \in D$, the map
\begin{align}
    f \mapsto Pf(z)
\end{align}
defines a continuous linear functional on $H^2(bD)$. By the Riesz representation theorem, this linear functional is represented by a kernel $k_z \in H^2(bD)$, which is to say
\begin{align}\label{71}
    Pf(z) = \int_{bD} f(\xi)\overline{k_z(\xi)} \, \mathrm{d}\sigma(\xi).
\end{align}
We define the \emph{Auxiliary Szeg\H{o} kernel} by
\begin{align}
  S^A_D(z, \xi) = \overline{k_z(\xi)}, \quad z \in D, \, \xi \in bD.
\end{align}
A function $f \in H^2(bD)$ may be considered as a function on $\overline{D}$,
where its values in $D$ are given by $Pf$ (see \cite{Krantz book}*{p. $66$}). Similarly, we extend ${S}_D^A(z, \xi)$ to a function on $D \times \overline{D}$ and its values on $D \times bD$ are defined only almost everywhere.

It follows from the reproducing property \eqref{71} that
\begin{align}
    \mathcal{S}^{A}_D(z) = S^A_{D}(z,z) =\operatorname{sup} \left\{|Pf(z)|^2 : f \in H^2(bD), \norm{f}_{L^2(bD)} \leq 1\right\}, \quad z\in D.
\end{align}
\end{defn}
\textbf{Remark.} 
It can be shown that the Szeg\H{o} and auxiliary Szeg\H{o} kernels are the same on convex domains.
We do not know if this is the case on all bounded smooth domains.

We now state and prove a localization result for the Szeg\H{o} kernel on the diagonal.
\begin{thm}\label{locSzegö}
    Let $D \subset \mathbb{C}^n$ be a bounded smooth pseudoconvex generalized decoupled domain near $0 \in bD$. Suppose $D' \subset D$ is an open set such that $bD \cap bD'$ contains a neighbourhood of origin. Then there exist a $\mathbb{C}^n$-neighbourhood $V$ of $0$ and a constant $C > 0$ such that
    \begin{align}
        \frac{1}{C}\mathcal{S}^A_{D'}(z) \leq \mathcal{S}_{D}(z) \leq C \mathcal{S}_{D'}(z),
    \end{align}
for each $z \in D' \cap V$.
\end{thm}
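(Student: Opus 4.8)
The plan is to prove the two inequalities separately, since they have somewhat different natures. The right-hand inequality $\mathcal{S}_D(z) \le C\,\mathcal{S}_{D'}(z)$ is the easy one: since $D' \subset D$ are bounded domains with $C^2$-smooth boundary, Lemma \ref{Chen lemma} (the Chen--Fu substitute for monotonicity of the Szeg\H{o} kernel) gives directly a constant $C > 0$ with $\mathcal{S}_D(z) \le C\,\mathcal{S}_{D'}(z)$ for all $z \in D'$, hence in particular on $D' \cap V$ for any neighbourhood $V$. So the entire content is in the left-hand inequality $\tfrac1C\,\mathcal{S}^A_{D'}(z) \le \mathcal{S}_D(z)$, which says that a function extremal for the auxiliary Szeg\H{o} kernel on the \emph{small} domain $D'$ can be modified to live on the \emph{large} domain $D$ with controlled $H^2(bD)$-norm and without destroying its value at $z$.

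The strategy for the left inequality is the standard $\overline{\partial}_b$-cutoff argument. Fix $z \in D' \cap V$ (with $V$ a small neighbourhood of $0$ to be chosen, contained in the common boundary piece $bD \cap bD'$), and take $f \in H^2(bD')$ with $\norm{f}_{L^2(bD')} \le 1$ realizing (up to $\varepsilon$) the supremum $\mathcal{S}^A_{D'}(z) = \sup\{|Pf(z)|^2\}$. Choose a smooth cutoff $\chi$ equal to $1$ near $0$ and supported in the region where $bD$ and $bD'$ coincide, so that $\chi f$ is a form on $bD$ (after the identification of the common boundary pieces). Then $\overline{\partial}_b(\chi f) = (\overline{\partial}_b\chi)\, f$ is a $(0,1)$-form on $bD$ supported away from $0$, with $L^2$-norm controlled by $\norm{f}_{L^2(bD')}$; since $D$ is bounded pseudoconvex we may solve $\overline{\partial}_b u = \overline{\partial}_b(\chi f)$ on $bD$ using Theorem \ref{2.5}/Lemma \ref{2.6} (here we need $n \ge 3$ for $\overline{\partial}_b$-closedness to suffice; for $n = 2$ one instead verifies the orthogonality condition against CR functions, using that $(\overline{\partial}_b\chi)f$ is supported away from $0$ and pairs to zero). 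This yields a CR function $g := \chi f - u \in H^2(bD)$ with $\norm{g}_{L^2(bD)} \lesssim 1$. To recover the value at $z$, one uses interior elliptic/subelliptic estimates: $u$ is supported (in the sense of its $\overline{\partial}_b$-data) away from $0$, and by Lemma \ref{2.6} $u$ can be taken smooth with Sobolev norms controlled by those of the data, so $|Pu(z)|$ — and more importantly $|\partial^\alpha Pu(z)|$ along the boundary — is bounded by a constant \emph{independent of $z$ near $0$}, whereas $|P(\chi f)(z)| = |Pf(z)|$ because $\chi \equiv 1$ near $z$. Hence $|Pg(z)| \ge |Pf(z)| - |Pu(z)| \ge |Pf(z)| - C$, and for $z$ with $|Pf(z)|^2 = \mathcal{S}^A_{D'}(z)$ large (which is the only case that matters, as $\mathcal{S}^A_{D'}(z) \to \infty$ as $z \to 0$) we get $|Pg(z)|^2 \gtrsim \mathcal{S}^A_{D'}(z)$. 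Since $g \in H^2(bD)$ with controlled norm and $\mathcal{S}_D(z) \ge |Pg(z)|^2 / \norm{g}^2_{H^2(bD)}$, this gives $\mathcal{S}_D(z) \gtrsim \mathcal{S}^A_{D'}(z)$ after shrinking $V$.

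The main obstacle is making the bound on the correction term $Pu(z)$ genuinely \emph{uniform} as $z \to 0 \in bD$: a priori $u$ is only in a Sobolev space $W^s_{0,0}(bD)$, and one needs to convert this into a uniform pointwise bound on $Pu$ near a boundary point, which requires enough Sobolev regularity (Lemma \ref{2.6} provides $u_s$ for every $s$, so one picks $s$ large enough for Sobolev embedding plus boundary-value control) together with the observation that the $\overline{\partial}_b$-data, being supported in a fixed compact set away from $0$, keeps $Pu$ and its derivatives bounded on a fixed neighbourhood of $0$. A secondary technical point is the identification of the boundary pieces $bD \cap bD'$ and checking that $\chi f$, $u$, $\overline{\partial}_b$ genuinely transfer between $bD'$ and $bD$; this is where the hypothesis that $bD \cap bD'$ contains a neighbourhood of the origin is used, and where one must be careful that the cutoff $\chi$ is supported strictly inside that common neighbourhood so that all the boundary $\overline{\partial}_b$-calculus takes place on the shared portion.
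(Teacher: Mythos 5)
Your right-hand inequality and the overall skeleton (cut off a near-extremal function for $\mathcal{S}^A_{D'}$, correct it with a $\overline{\partial}_b$-solution via Theorem \ref{2.5}/Lemma \ref{2.6}, and test $\mathcal{S}_D$ against the resulting CR function) coincide with the paper's. The genuine gap is in how you handle the correction term. You need $|Pu(z)|$ bounded uniformly as $z \to 0$, and you propose to get it from Lemma \ref{2.6} with $s$ large plus Sobolev embedding, ``together with the observation that the data is supported away from $0$''. Two problems: (i) the data $(\overline{\partial}_b\chi)f$ is only $L^2$ on $bD$ --- the near-extremal $f$ is a general element of $H^2(bD')$, with no smoothness up to the part of $bD'$ meeting $\operatorname{supp}\overline{\partial}\chi$ --- so the estimate $\norm{u_s}_{s(bD)} \leq C_s\norm{\alpha}_{s(bD)}$ for large $s$ is not available with a bound in terms of $\norm{f}_{L^2(bD')}$; approximating $f$ by elements of $A^{\infty}(D')$ makes the data smooth but does not control its high Sobolev norms by $\norm{f}_{L^2(bD')}$. (ii) Even granting regularity, only the \emph{data} is supported away from $0$; the solution $u$ is global on $bD$, and an $L^2(bD)$ bound on $u$ does not bound $Pu(z)$ as $z \to 0$, because the Poisson kernel $P(z,\cdot)$ concentrates at $0$. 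So the step ``$|Pg(z)| \geq |Pf(z)| - C$ with $C$ independent of $z$ near $0$'' --- which is the crux --- is unjustified.

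The paper avoids needing any pointwise control of the correction by a Kerzman-type device that your argument is missing: it solves $\overline{\partial}_b u^w = \tau\bigl(F_n^w\,\overline{\partial}\chi/(z_n - w_n)\bigr)$, which is legitimate since $|z_n - w_n|$ is bounded below on $\operatorname{supp}\overline{\partial}\chi$ for $w$ near $0$ and the data is smooth because $F_n^w \in A^{\infty}(D')$ approximates $S^A_{D'}(\cdot,w)$ in $L^2(bD')$; it then sets $G_n^w = \chi F_n^w - (z_n - w_n)u^w$. The correction vanishes on the slice $\{z_n = w_n\}$, and the maximum principle on that slice gives $G_n^w(w) = F_n^w(w)$ \emph{exactly}; consequently only the $L^2$ estimate of Lemma \ref{2.6}, uniform in $w$, is needed to control $\norm{G_n^w}_{L^2(bD)}$, and one lets $n \to \infty$. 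To salvage your version you would have to import this factor-of-$(z_n - w_n)$ trick (or some other mechanism preserving the value at $z$ exactly), since the additive-error route cannot be closed with the estimates actually available. The other points you flag --- the $q = n-1$ case when $n = 2$, and the region where $\mathcal{S}^A_{D'}(z)$ is not large (handled trivially by testing $\mathcal{S}_D$ against a constant function) --- are minor by comparison.
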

\begin{proof}
Since $bD' \cap bD$ contains a neighbourhood of origin,
there exists $\delta > 0$ such that 
\begin{align}
D' \cap W = D \cap W = \{z \in W : \operatorname{Re}z_n > F(z')\},    
\end{align}
where $W = F^{-1}(-\delta, \delta) \times (-\delta, \delta) \times (-\delta, \delta)$. 

Choose $\chi \in C_c^{\infty}(W)$ such that $\chi = 1$ on $F^{-1}(-\delta/2, \delta/2) \times (-\delta/2, \delta/2) \times (-\delta/2, \delta/2)$ and $0 \leq X \leq 1$.
Let $V = F^{-1}(-\delta/4, \delta/4) \times (-\delta/4, \delta/4) \times (-\delta/4, \delta/4)$. Fix $w \in D' \cap V$, then $\mathcal{S}^A_{D'}(., w) \in H^2(bD')$, which implies there exists a sequence $\left( F_n^w \right)_{n \in \mathbb{N}}$ in $A^{\infty}(D')$ such that
$F_n^w$ converges to $\mathcal{S}^A_{D'}(., w)$ in $L^2(bD')$.
Define, 
\begin{align}
    v^w(z) = \tau\left(\frac{F_n^w(z) \bar \partial \chi(z)}{z_n - w_n}\right) \in C^{\infty}(bD).
\end{align}
Using Theorem \ref{2.5} and Lemma \ref{2.6}, there exists $u^w \in C^{2}(bD)$ such that
\begin{align}\label{78}
    \bar \partial_b u^w = v^w \, \, \text{on} \, \, bD \quad \text{and}
\end{align}
there exists a constant $C > 0$ independent of $v^w$ such that
\begin{align}
    \int_{bD}|u_w|^2 \, \mathrm{d}\sigma &\leq C \int_{b{D}}|v_w|^2 \, \mathrm{d}\sigma \nonumber\\
    &= C\int_{bD' \cap W} \frac{|F^w(z)|^2|\partial \chi(z)|^2}{|z_n - w_n|^2} \, \mathrm{d}\sigma(z)\nonumber\\
    &= C\int_{\{\operatorname{Re}z_n = F(z')\} \cap W \setminus \{|\operatorname{Re}z_n| < \delta/3, |\operatorname{Im}z_n| < \delta/3\}} \frac{|F^w(z)|^2 |\bar \partial \chi(z)|^2}{|z_n - w_n|^2} \, \mathrm{d}\sigma(z) \nonumber\\
    &\lesssim \int_{bD'}|F^w(z)|^2 \mathrm{d}\sigma(z).
\end{align}
Let
\begin{align}
    G_n^w(z) = F_n^w(z)\chi(z) - (z_n - w_n)u^w(z) \in C^2(bD).
\end{align}
Then, by \eqref{78}, $\bar \partial_b G_n^w = 0$ on $D$.
Using \cite{Kytmanov}*{Theorem $7.1$}, we have $G_n^w \in \mathcal{O}(D) \cap C^{2}(\overline{D})$, which implies $G_n^w \in \mathcal{O}(\Omega_F \cap W) \cap C^{2}\left(\overline{\Omega_F \cap W}\right)$, where $\Omega_F = \{z \in \mathbb{C}^n : \operatorname{Re}z_n > F(z')\}$. 
Here $G_n^w = F_n^w$ on $ b\left(\Omega_F \cap W \cap \{z_n = w_n\}\right)$ and $G_n^w(\cdot, w_n), F_n^w(\cdot, w_n) \in \mathcal{O}\left(\Omega_F \cap W \cap \{z_n = w_n\}\right)$. By maximum principle, we get
\begin{align}
    G_n^w(w) = F_n^w(w).
\end{align}
Consider, 
\begin{align}
    \mathcal{S}_{D}(w) \geq \frac{|G_n^w(w)|^2}{\norm{G_n^w}_{L^2(bD)}^2} \geq C \frac{|F_n^w(w)|^2}{\norm{F_n^w}_{L^2(bD')}^2},
\end{align}
for each $n \in \mathbb{N}$ and the constant $C (> 0)$ is independent of $n$ and $w \in D' \cap V$. As $n \to \infty$, we get
\begin{align}\label{lowerboundSzego}
\mathcal{S}_{D}(w) \geq C\frac{\left(\mathcal{S}^A_{D'}(w)\right)^2}{\norm{{S}^A_{D'}(\cdot, 
w)}^2_{L^2(bD')}} = C \frac{\left(\mathcal{S}^A_{D'}(w)\right)^2}{\mathcal{S}^A_{D'}(w)} = C\mathcal{S}^A_{D'}(w),
\end{align}
for each $w \in D' \cap V$. Hence we get the theorem by using \eqref{lowerboundSzego} and Lemma \ref{Chen lemma}.
\end{proof}
We now prove the optimal estimates of the Szeg\H{o} kernel on the diagonal.
\begin{manualtheorem}{1.4}
Let $D \subset \mathbb{C}^n$ be a bounded smooth pseudoconvex generalized decoupled domain near $0 \in bD$.
\begin{enumerate}
    \item Then, there exist constants $C > 0$ and $t_0 > 0$ such that
    \begin{align}
    \mathcal{S}_D(z) \geq C (\operatorname{Re}z_n)^{-1} \Pi_{j = 1}^{k} [f_j^{-1}(\operatorname{Re} z_n)]^{-2n_{j}},
\end{align}
for each $z \in \{w \in \mathbb{C}^n: \operatorname{Re}w_n > F(w'), \text{ and } |w_n| < t_0\}$.
    \item Suppose $bD$ is convex near the origin.  Then, for  each $\alpha = (\alpha_1, \dots, \alpha_k)$ with $\alpha_j > 1$ $(j \in \{1, 2, \dots, k\})$ and $\beta > 1$, there exist constants $C(\alpha, \beta) > 0$ and $t_0 > 0$ such that
\begin{align}
    \mathcal{S}_{D}(z) \leq C({\alpha})(\operatorname{Re}z_n)^{-1} \Pi_{j = 1}^{k} [f_j^{-1}(\operatorname{Re} z_n)]^{-2n_{j}},
\end{align}
for each $z \in \Gamma^{\alpha, \beta}_k \cap \{w \in \mathbb {C}^n :|w_n| < t_0\}$. 
\end{enumerate}
\end{manualtheorem}
\begin{proof}
Since $D \subset \mathbb{C}^n$ is a generalized decoupled domain, there exists a neighbourhood $U$ of the origin such that 
    \begin{align}
        D \cap U = \{z \in U: \operatorname{Re}z_n > F(z')\}.
    \end{align}
$(1)$ There exists a domain $D' \subset \mathbb{C}^n$ with $C^2$ boundary and a neighbourhood $W \subset \subset U$ of origin such that 
\begin{align}
    D' \cap W = D \cap W, \text{ and } D' \subset D \cap U.
\end{align}
Recall,
\[\mathcal{S}^A_{D'}(z) = \operatorname{sup}\{|Pf(z)|^2 : f \in H^2(bD'), \norm{f}_{L^2(bD')} \leq 1\}.\]

Fix $z \in D \cap W$. Let $\phi(w', w_n) = \frac{(\operatorname{Re}z_n)^n}{(w_n - i \operatorname{Im}z_n + \operatorname{Re} z_n)^n}$. Then, $\phi \in H^2(bD')$. By using Theorem \ref{locSzegö}, there exist a neighbourhood  $W_0 \subset \subset W$ of origin and a constant $C > 0$ such that
\begin{align}\label{lower bound 1 Szegö}
    \mathcal{S}_{D}(z) \geq C \mathcal{S}^A_{D'}(z) \geq C\frac{|\phi(z)|^2}{\norm{\phi}^2_{H^2(bD')}} = \frac{C}{2^{2n}} \cdot \frac{1}{\norm{\phi}^2_{H^2(bD')}},
\end{align}
for each $z \in D \cap W_0$.
We want to prove the lower bound of the Szeg\H{o} kernel, therefore it is enough to find the upper bound of $\norm{\phi}^2_{H^2(bD')}.$

Without loss of generality, we can assume that $U$ is contained in the unit ball centred at zero. Let $w_n = u + iv$. Consider
\begin{align*}
        \int_{bD' \cap W} |\phi(w', w_n)|^2 &\mathrm{d} \sigma(w) = \int_{\{w \in W: \operatorname{Re}w_n = F(w')\}} |\phi(w', w_n)|^2 \, \mathrm{d} \sigma(w)\\
        & \lesssim \int_{|w'| < 1} \int_{-1}^{1} \frac{(\operatorname{Re}z_n)^{2n}}{|f_1(|\xi^1|) + \dots + f_k(|\xi^k|) + i (v - \operatorname{Im}z_n) + \operatorname{Re}z_n|^{2n}} \, \mathrm{d}v \, \mathrm{d}V(w').
\end{align*}
For simplicity of notation, denote $\operatorname{Re}z_n$ by $t$. By using the change of variable $ v = (f_1(|\xi^1|) + \dots + f_k(|\xi^k|) + t)s + \operatorname{Im}z_n$, we get
\begin{align}
        \int_{|w'| < 1} \int_{-1}^{1}& \frac{(\operatorname{Re}z_n)^{2n}}{|f_1(|\xi^1|) + \dots + f_k(|\xi^k|) + i (v - \operatorname{Im}z_n) + \operatorname{Re}z_n|^{2n}} \, \mathrm{d}v \, \mathrm{d}V(w') \nonumber \\   
        &\lesssim  \int_{|\xi^1| < 1} \dots \int_{|\xi^k| < 1} \frac{t^{2n}}{(f_1(|\xi^1|) + \dots + f_k(|\xi^k|) + t)^{2n - 1}} \, \mathrm{d}\xi^{k} \dots \mathrm{d}\xi^1 \nonumber \\
        &=  \int_{|\xi^1| < 1} \dots \int_{|\xi^k| < 1} \frac{t^{2n}}{(f_1(|\xi^1|) + \dots + f_k(|\xi^k|) + t)^{1 + 2n_1 + 2n_2 + \dots + 2n_k}} \, \mathrm{d}\xi^k  \dots \mathrm{d}\xi^1     \nonumber  \\
        &\leq \int_{|\xi^1| < 1} \dots \int_{|\xi^k| < 1} \frac{t^{2n}}{t(f_1(|\xi^1|) + t)^{2n_1} \dots (f_k(|\xi^k|) + t)^{2n_k}} \, \mathrm{d}\xi^{k} \dots \mathrm{d}\xi^1 \nonumber \\ 
        &\lesssim t^{2n - 1} \Pi_{j = 1}^{k}\int_{|\xi^j| < 1} \frac{1}{(f_j(|\xi^j|) + t)^{2n_j}} \, \mathrm{d}\xi^j \nonumber \\
        &\lesssim t^{2n - 1} \Pi_{j = 1}^{k}\int_{0}^{1} \frac{r^{2n_j - 1}}{(f_j(r) + t)^{2n_j}} \, \mathrm{d}r
\end{align}
Using Theorem \ref{finite type lemma}, we get
\begin{align}
        t^{2n - 1} \Pi_{j = 1}^{k}\int_{0}^{1} \frac{r^{2n_j - 1}}{(f_j(r) + t)^{2n_j}} \, \mathrm{d}r
        \lesssim t^{2n - 1} \Pi_{j = 1}^{k} \frac{[f_j^{-1}(t)]^{2n_j}}{t^{2n_j}} = t \Pi_{j = 1}^{k} [f_j^{-1}(t)]^{2n_j}.
\end{align}
Hence,
\begin{align}
    \int_{bD' \cap W} |\phi(w', w_n)|^2 \mathrm{d} \sigma(w) \lesssim t \Pi_{j = 1}^{k} [f_j^{-1}(t)]^{2n_j}.
\end{align}
Choose $t_0 > 0$ sufficiently small such that 
\begin{align}
    bD' \cap W^c \subset \{z \in \mathbb{C}^n : |z_n| > 2t_0\}.
\end{align}
Now we have,
\begin{align}
    \int_{bD' \cap W^c} |\phi(w', w_n)|^2 \mathrm{d} \sigma(w) \lesssim t^{2n} \lesssim t^2 \cdot t^{2n_1 + 2n_2 + \dots + 2n_k} \lesssim t \cdot  \Pi_{j = 1}^{k} [f_j^{-1}(t)]^{2n_j}.
\end{align}
Hence,
\begin{align}\label{93}
    \norm{\phi}^2_{H^2(bD')} &= \int_{bD'} |\phi(w', w_n)|^2 \mathrm{d} \sigma(w) \nonumber\\
    &=  \int_{bD'\cap W} |\phi_(w', w_n)|^2 \mathrm{d} \sigma(w) +  \int_{bD' \cap W^c} |\phi(w', w_n)|^2 \mathrm{d} \sigma(w)\nonumber \\
    &\lesssim t \Pi_{j = 1}^{k} [f_j^{-1}(t)]^{2n_j} = (\operatorname{Re}z_n)\Pi_{j = 1}^{k}[f_j^{-1}(\operatorname{Re}z_n)]^{2n_j},
\end{align}
for each $z \in D \cap W_0$.
By using \eqref{lower bound 1 Szegö} and \eqref{93}, there exists $t_0 > 0$ such that
\begin{align}
    \mathcal{S}_{D}(z) \geq C (\operatorname{Re}z_n)^{-1} \Pi_{j = 1}^{k} [f_j^{-1}(\operatorname{Re} z_n)]^{-2n_{j}},
\end{align}
for each $z \in \{w \in \mathbb{C}^n : \operatorname{Re}w_n > F(w'), \text{ and } |w_n|<t_0\}$.

$(2)$ Since $bD \cap U$ is convex, there exists a bounded convex domain $D_1 \subset D\cap U$ with
smooth boundary such that $bD \cap bD_1$ contains a $bD$-neighbourhood of origin. Using Lemma \ref{Chen lemma}, there exists a constant $C > 0$ such that
    \begin{align}
        \mathcal{S}_{D}(z) \leq C \mathcal{S}_{D_1}(z)
    \end{align}
    for each $z \in D_1$. By using Theorem \ref{Chen theorem}, we have
    \begin{align}
        \mathcal{S}_{D_1}(z) \lesssim \delta_{D_1}(z) \kappa_{D_1}(z),
    \end{align}
    for each $z \in D_1$. There exists a neighbourhood $W_0 \subset \subset U$ of origin such that
    $D_1 \cap W_{0} = D_F \cap W_0$, recall $D_F = D \cap U$. Hence from the above estimates of the Szeg\H{o} kernel, we get
    \begin{align}\label{37}
        \mathcal{S}_{D}(z) \lesssim \delta_{D_1}(z) \kappa_{D_1}({z}) \lesssim \operatorname{Re}z_n \kappa_{D_1 \cap W_0}(z),
    \end{align}
    for each $z \in D_1 \cap W_0$. By \eqref{Upper bound of Bergman kernel} there exist constants $C(\alpha, \beta),$ $t_0 > 0$ such that 
    \begin{align}\label{38}
        \kappa_{D_1 \cap W_{0}}(z) \leq C(\alpha, \beta) (\operatorname{Re}z_n)^{-2} \Pi_{j = 1}^{k}[f_j^{-1}(\operatorname{Re}z_n)]^{-2n_j},
    \end{align}for each $z \in \Gamma^{\alpha, \beta}_k \cap \{w \in \mathbb {C}^n : |w_n| < t_0\}$. Hence from \eqref{37} and \eqref{38}, we get
    \begin{align*}
    \mathcal{S}_{D}(z) \leq C_{\alpha}(\operatorname{Re}z_n)^{-1} \Pi_{j = 1}^{k} [f_j^{-1}(\operatorname{Re} z_n)]^{-2n_{j}},
\end{align*}
for each $z \in \Gamma^{\alpha, \beta}_k \cap \{w \in \mathbb {C}^n : |w_n| < t_0 \}$.
\end{proof}
\subsection{Lower bounds of the Bergman and Szeg\H{o} kernels on extended generalized decoupled domains}
Choosing the suitable functions as we did in the proof of the Theorem \ref{Bergman kernel} and Theorem \ref{Theorem 1.3}, we can prove the following theorem related to lower bounds of the Bergman and Szeg\H{o} kernels on the diagonal below. 
\begin{manualtheorem}{1.6}
    Let $D \subset \mathbb{C}^n$ be a bounded $C^2$-smooth domain with $0 \in bD$. Suppose $D$ is an extended generalized decoupled domain near $0$.
    Then there exists constant $C(\delta) > 0$ such that
\begin{align}
    \kappa_D(z) &\geq C(\delta) (\operatorname{Re}z_n)^{-2} \Pi_{j = 1}^{k} [f_j^{-1}(\operatorname{Re} z_n)]^{-n_{j}}, \text{ and} \label{Bergman**}\\
    \mathcal{S}_{D}(z) &\geq C(\delta) (\operatorname{Re}z_n)^{-1} \Pi_{j = 1}^{k} [f_j^{-1}(\operatorname{Re} z_n)]^{-n_{j}},\label{Szegö**}
\end{align}
for each $z \in \{w \in D : |w_n| < \delta/2\}$.
\end{manualtheorem}
\begin{proof}
Recall
\begin{align*}
    \kappa_D(z) &= \operatorname{sup}\{|f(z)|^2: f \in A^2(D), \norm{f}_{L^2(D)} \leq 1\}, \text{and}\\
    \mathcal{S}_{D}(z) &= \operatorname{sup}\{|f(z)|^2 : f \in H^2(D), \norm{f}_{H^2(D)} \leq 1\}.
\end{align*}
Fix $z \in D$. Let $\phi(w', w_n) = \frac{(\operatorname{Re}z_n)^n}{(w_n - i \operatorname{Im}z_n + \operatorname{Re} z_n)^n}$. Then, $\phi \in A^2(D) \cap H^2(D)$ and
\begin{align}
    \kappa_D(z) &\geq \frac{|\phi(z)|^2}{\norm{\phi}^2_{L^2(D)}} = \frac{1}{2^{2n}} \cdot \frac{1}{\norm{\phi}^2_{L^2(D)}}, \text{ and}\label{55}\\
    \mathcal{S}_{D}(z) &\geq \frac{|\phi(z)|^2}{\norm{\phi}^2_{H^2(D)}} = \frac{1}{2^{2n}} \cdot \frac{1}{\norm{\phi}^2_{H^2(D)}}\label{lower bound 1*}.
\end{align}
We want to prove the lower bounds of the Bergman and Szeg\H{o} kernels on the diagonal, it is enough to find the upper bounds of $\norm{\phi}^2_{L^2(D)} \text{ and } \norm{\phi}^2_{H^2(D)}$, respectively.

As we did in the proof of Theorem \ref{Bergman kernel} and Theorem \ref{Theorem 1.3}, we get
\begin{align}
    \norm{\phi}^2_{L^2(D)} &\lesssim (\operatorname{Re}z_n)^{-2} \Pi_{j = 1}^{k} [f_j^{-1}(\operatorname{Re} z_n)]^{-n_{j}}, \text{ and} \label{57}\\
    \norm{\phi}^2_{H^2(D)} &\lesssim (\operatorname{Re}z_n)^{-1} \Pi_{j = 1}^{k} [f_j^{-1}(\operatorname{Re} z_n)]^{-n_{j}},\label{58}
\end{align}
for each $z \in \{w \in D : |w_n| < \delta/2\}$. By using 
the estimates \eqref{55}, \eqref{lower bound 1*}, \eqref{57}, and \eqref{58}, we get lower bounds \eqref{Bergman**} and \eqref{Szegö**} of the Bergman and 
Szeg\H{o} kernels, respectively.
\end{proof}
\section*{Acknowledgements}
I express my gratitude to my advisor, Sivaguru Ravisankar, for numerous insightful discussions, valuable suggestions, and constructive comments that have greatly enriched this work.
\def\MR#1{\relax\ifhmode\unskip\spacefactor3000 \space\fi%
  \href{http://www.ams.org/mathscinet-getitem?mr=#1}{MR#1}}
 \begin{bibdiv}
\begin{biblist}
\bib{Stefan 1933}{article}{
   author={Bergmann, Stefan},
   title={\"{U}ber die Kernfunktion eines Bereiches und ihr Verhalten am Rande.
   I},
   language={German},
   journal={J. Reine Angew. Math.},
   volume={169},
   date={1933},
   pages={1--42},
   issn={0075-4102},
   review={\MR{1581372}},
   doi={10.1515/crll.1933.169.1},
}
\bib{Bergman 1970}{book}{
   author={Bergman, Stefan},
   title={The kernel function and conformal mapping},
   series={Mathematical Surveys, No. V},
   edition={Second, revised edition},
   publisher={American Mathematical Society, Providence, R.I.},
   date={1970},
   pages={x+257},
   review={\MR{0507701}},
}
\bib{Bharali 2010}{article}{
   author={Bharali, Gautam},
   title={On the growth of the Bergman kernel near an infinite-type point},
   journal={Math. Ann.},
   volume={347},
   date={2010},
   number={1},
   pages={1--13},
   issn={0025-5831},
   review={\MR{2593280}},
   doi={10.1007/s00208-009-0421-x},
}
\bib{Bharali 2020}{article}{
   author={Bharali, Gautam},
   title={On the growth of the Bergman metric near a point of infinite type},
   journal={J. Geom. Anal.},
   volume={30},
   date={2020},
   number={2},
   pages={1238--1258},
   issn={1050-6926},
   review={\MR{4081311}},
   doi={10.1007/s12220-019-00342-9},
}

\bib{Yu 1995}{article}{
   author={Boas, Harold P.},
   author={Straube, Emil J.},
   author={Yu, Ji Ye},
   title={Boundary limits of the Bergman kernel and metric},
   journal={Michigan Math. J.},
   volume={42},
   date={1995},
   number={3},
   pages={449--461},
   issn={0026-2285},
   review={\MR{1357618}},
   doi={10.1307/mmj/1029005306},
}
\bib{Catlin}{article}{
   author={Catlin, David W.},
   title={Estimates of invariant metrics on pseudoconvex domains of
   dimension two},
   journal={Math. Z.},
   volume={200},
   date={1989},
   number={3},
   pages={429--466},
   issn={0025-5874},
   review={\MR{978601}},
   doi={10.1007/BF01215657},
}
\bib{Chen 2011}{article}{
   author={Chen, Bo-Yong},
   author={Fu, Siqi},
   title={Comparison of the Bergman and Szegö kernels},
   journal={Adv. Math.},
   volume={228},
   date={2011},
   number={4},
   pages={2366--2384},
   issn={0001-8708},
   review={\MR{2836124}},
   doi={10.1016/j.aim.2011.07.013},
}
\bib{Chen}{book}{
   author={Chen, So-Chin},
   author={Shaw, Mei-Chi},
   title={Partial differential equations in several complex variables},
   series={AMS/IP Studies in Advanced Mathematics},
   volume={19},
   publisher={American Mathematical Society, Providence, RI; International
   Press, Boston, MA},
   date={2001},
   pages={xii+380},
   isbn={0-8218-1062-6},
   review={\MR{1800297}},
   doi={10.1090/amsip/019},
}
\bib{D'Angelo 1982}{article}{
   author={D'Angelo, John P.},
   title={Real hypersurfaces, orders of contact, and applications},
   journal={Ann. of Math. (2)},
   volume={115},
   date={1982},
   number={3},
   pages={615--637},
   issn={0003-486X},
   review={\MR{657241}},
   doi={10.2307/2007015},
}

\bib{Diederich 1970}{article}{
   author={Diederich, Klas},
   title={Das Randverhalten der Bergmanschen Kernfunktion und Metrik in
   streng pseudo-konvexen Gebieten},
   language={German},
   journal={Math. Ann.},
   volume={187},
   date={1970},
   pages={9--36},
   issn={0025-5831},
   review={\MR{262543}},
   doi={10.1007/BF01368157},
}
\bib{Diederich 1973}{article}{
   author={Diederich, Klas},
   title={\"{U}ber die 1. und 2. Ableitungen der Bergmanschen Kernfunktion und
   ihr Randverhalten},
   language={German},
   journal={Math. Ann.},
   volume={203},
   date={1973},
   pages={129--170},
   issn={0025-5831},
   review={\MR{328130}},
   doi={10.1007/BF01431441},
}
\bib{Fornaess 1984}{article}{
   author={Diederich, K.},
   author={Forn\ae ss, J. E.},
   author={Herbort, G.},
   title={Boundary behavior of the Bergman metric},
   conference={
      title={Complex analysis of several variables},
      address={Madison, Wis.},
      date={1982},
   },
   book={
      series={Proc. Sympos. Pure Math.},
      volume={41},
      publisher={Amer. Math. Soc., Providence, RI},
   },
   date={1984},
   pages={59--67},
   review={\MR{740872}},
   doi={10.1090/pspum/041/740872},
}
\bib{Diedrich 1993}{article}{
   author={Diederich, K.},
   author={Herbort, G.},
   title={Geometric and analytic boundary invariants on pseudoconvex
   domains. Comparison results},
   journal={J. Geom. Anal.},
   volume={3},
   date={1993},
   number={3},
   pages={237--267},
   issn={1050-6926},
   review={\MR{1225297}},
   doi={10.1007/BF02921392},
}
\bib{Diedrich 1994}{article}{
   author={Diederich, Klas},
   author={Herbort, Gregor},
   title={Pseudoconvex domains of semiregular type},
   conference={
      title={Contributions to complex analysis and analytic geometry},
   },
   book={
      series={Aspects Math., E26},
      publisher={Friedr. Vieweg, Braunschweig},
   },
   date={1994},
   pages={127--161},
   review={\MR{1319347}},
}
\bib{Fefferman 1974}{article}{
   author={Fefferman, Charles},
   title={The Bergman kernel and biholomorphic mappings of pseudoconvex
   domains},
   journal={Invent. Math.},
   volume={26},
   date={1974},
   pages={1--65},
   issn={0020-9910},
   review={\MR{350069}},
   doi={10.1007/BF01406845},
}
\bib{Herbort 1992}{article}{
   author={Herbort, Gregor},
   title={Invariant metrics and peak functions on pseudoconvex domains of
   homogeneous finite diagonal type},
   journal={Math. Z.},
   volume={209},
   date={1992},
   number={2},
   pages={223--243},
   issn={0025-5874},
   review={\MR{1147815}},
   doi={10.1007/BF02570831},
}
\bib{Herbort 1993}{article}{
   author={Herbort, Gregor},
   title={On the invariant differential metrics near pseudoconvex boundary
   points where the Levi form has corank one},
   journal={Nagoya Math. J.},
   volume={130},
   date={1993},
   pages={25--54},
   issn={0027-7630},
   review={\MR{1223728}},
   doi={10.1017/S0027763000004414},
}
\bib{Hörmander}{book}{
   author={H\"{o}rmander, Lars},
   title={An introduction to complex analysis in several variables},
   series={North-Holland Mathematical Library},
   volume={7},
   edition={3},
   publisher={North-Holland Publishing Co., Amsterdam},
   date={1990},
   pages={xii+254},
   isbn={0-444-88446-7},
   review={\MR{1045639}},
}
\bib{Ravi}
    {article}{
      title={Asymptotic behaviour of the Bergman kernel and metric}, 
      author={Ravi Shankar Jaiswal},
      year={2023},
      note=
      {\href{https://arxiv.org/abs/2311.01097}{	arXiv:2311.01097}}
}

\bib{Jarnicki}{book}{
   author={Jarnicki, Marek},
   author={Pflug, Peter},
   title={Invariant distances and metrics in complex analysis},
   series={De Gruyter Expositions in Mathematics},
   volume={9},
   publisher={Walter de Gruyter \& Co., Berlin},
   date={1993},
   pages={xii+408},
   isbn={3-11-013251-6},
   review={\MR{1242120}},
   doi={10.1515/9783110870312},
}
\bib{Kamimoto}
 {article} {
 title={The asymptotic behavior of the Bergman kernel on pseudoconvex model domains}, 
      author={Joe Kamimoto},
      year={2023},
      note={\href{https://doi.org/10.48550/arXiv.2308.08249}{arXiv:2308.08249}}
}
\bib{Krantz book}{book}{
   author={Krantz, Steven G.},
   title={Function theory of several complex variables},
   note={Reprint of the 1992 edition},
   publisher={AMS Chelsea Publishing, Providence, RI},
   date={2001},
   pages={xvi+564},
   isbn={0-8218-2724-3},
   review={\MR{1846625}},
   doi={10.1090/chel/340},
}
\bib{Kytmanov}{book}{
   author={Kytmanov, Alexander M.},
   title={The Bochner-Martinelli integral and its applications},
   note={Translated from the Russian by Harold P. Boas and revised by the
   author},
   publisher={Birkh\"{a}user Verlag, Basel},
   date={1995},
   pages={xii+305},
   isbn={3-7643-5240-X},
   review={\MR{1409816}},
   doi={10.1007/978-3-0348-9094-6},
}
\bib{McNeal 1989}{article}{
   author={McNeal, Jeffery D.},
   title={Boundary behavior of the Bergman kernel function in ${\bf C}^2$},
   journal={Duke Math. J.},
   volume={58},
   date={1989},
   number={2},
   pages={499--512},
   issn={0012-7094},
   review={\MR{1016431}},
   doi={10.1215/S0012-7094-89-05822-5},
}
\bib{McNeal 1991}{article}{
   author={McNeal, Jeffery D.},
   title={Local geometry of decoupled pseudoconvex domains},
   conference={
      title={Complex analysis},
      address={Wuppertal},
      date={1991},
   },
   book={
      series={Aspects Math., E17},
      publisher={Friedr. Vieweg, Braunschweig},
   },
   date={1991},
   pages={223--230},
   review={\MR{1122183}},
}
\bib{McNeal 1992}{article}{
   author={McNeal, Jeffery D.},
   title={Lower bounds on the Bergman metric near a point of finite type},
   journal={Ann. of Math. (2)},
   volume={136},
   date={1992},
   number={2},
   pages={339--360},
   issn={0003-486X},
   review={\MR{1185122}},
   doi={10.2307/2946608},
}
\bib{McNeal 1994}{article}{
   author={McNeal, Jeffery D.},
   title={Estimates on the Bergman kernels of convex domains},
   journal={Adv. Math.},
   volume={109},
   date={1994},
   number={1},
   pages={108--139},
   issn={0001-8708},
   review={\MR{1302759}},
   doi={10.1006/aima.1994.1082},
}
\bib{McNeal 1997}{article}{
   author={McNeal, J. D.},
   author={Stein, E. M.},
   title={The Szeg\H{o} projection on convex domains},
   journal={Math. Z.},
   volume={224},
   date={1997},
   number={4},
   pages={519--553},
   issn={0025-5874},
   review={\MR{1452048}},
   doi={10.1007/PL00004593},
}
\bib{Nagel}{article}{
   author={Nagel, A.},
   author={Rosay, J.-P.},
   author={Stein, E. M.},
   author={Wainger, S.},
   title={Estimates for the Bergman and Szeg\H{o} kernels in ${\bf C}^2$},
   journal={Ann. of Math. (2)},
   volume={129},
   date={1989},
   number={1},
   pages={113--149},
   issn={0003-486X},
   review={\MR{979602}},
   doi={10.2307/1971487},
}
\bib{Nikolov 2003}{article}{
   author={Nikolov, Nikolai},
   author={Pflug, Peter},
   title={Estimates for the Bergman kernel and metric of convex domains in
   $\mathbb{C}^n$},
   journal={Ann. Polon. Math.},
   volume={81},
   date={2003},
   number={1},
   pages={73--78},
   issn={0066-2216},
   review={\MR{1977762}},
   doi={10.4064/ap81-1-6},
}
\bib{Nikolov 2011}{article}{
   author={Nikolov, Nikolai},
   author={Pflug, Peter},
   author={Zwonek, W\l odzimierz},
   title={Estimates for invariant metrics on $\mathbb{C}$-convex domains},
   journal={Trans. Amer. Math. Soc.},
   volume={363},
   date={2011},
   number={12},
   pages={6245--6256},
   issn={0002-9947},
   review={\MR{2833552}},
   doi={10.1090/S0002-9947-2011-05273-6},
}
\bib{Stein 1972}{book}{
   author={Stein, E. M.},
   title={Boundary behavior of holomorphic functions of several complex
   variables},
   series={Mathematical Notes, No. 11},
   publisher={Princeton University Press, Princeton, N.J.; University of
   Tokyo Press, Tokyo},
   date={1972},
   pages={x+72},
   review={\MR{0473215}},
}
\bib{Wu 2022}{article}{
   author={Wu, JuJie},
   author={Xing, Xu},
   title={Boundary behavior of the Szegö kernel},
   journal={Bull. Lond. Math. Soc.},
   volume={54},
   date={2022},
   number={1},
   pages={285--300},
   issn={0024-6093},
   review={\MR{4408621}},
   doi={10.1112/blms.12623},
}

\end{biblist}
\end{bibdiv}
\end{document}